\Crefname{Th}{Theorem}{Theorems}
\theoremstyle{plain}
\newtheorem{Th}{Theorem}
\newtheorem{Prop}{Proposition}
\theoremstyle{definition}
\newtheorem{Ex}{Example}
\newtheorem{Rmk}{Remark}
\newtheorem{Ass}{Assumption}
\newtheorem{lem}{Lemma}
\newcommand{\eg}{\textit{e.g.}}
\newcommand{\ie}{\textit{i.e.}}
\newcommand{\set}[1]{\mathcal{#1}}
\newcommand{\subj}{\rm{s.t.}}
\newcommand{\ubar}[1]{\underaccent{\bar}{#1}}
\newcommand{\D}{\mathcal{D}}
\newcommand{\K}{\mathcal{K}}
\newcommand{\N}{\mathcal{N}}
\newcommand{\T}{\mathcal{T}}
\newcommand{\V}{\mathcal{V}}
\newcommand{\R}{\mathbb R}
\DeclarePairedDelimiter{\ceil}{\lceil}{\rceil}
\DeclareMathOperator{\E}{\mathbb{E}}
\date{}
\begin{document}

\begin{center}
\LARGE
    Reliable Frequency Regulation through Vehicle-to-Grid:\\ Encoding Legislation with Robust Constraints
\end{center}

\begin{center}
Dirk Lauinger\textsuperscript{1}, Fran\c{c}ois Vuille\textsuperscript{2}, Daniel Kuhn\textsuperscript{3}

\footnotesize
    \textsuperscript{1}Massachusetts Institute of Technology,  \href{mailto:lauinger@mit.edu}{lauinger@mit.edu} 
    
    \textsuperscript{2}Etat de Vaud,  \href{mailto:francois.vuille@vaud.ch}{francois.vuille@vaud.ch} 
    
    \textsuperscript{3}Ecole polytechnique fédérale de Lausanne,  \href{mailto:daniel.kuhn@epfl.ch}{daniel.kuhn@epfl.ch}
\end{center}

\noindent \textbf{Problem definition:} Vehicle-to-grid increases the low utilization rate of privately owned electric vehicles by making their batteries available to electricity grids. We formulate a robust optimization problem that maximizes a vehicle owner's expected profit from selling primary frequency regulation to the grid and guarantees that market commitments are met at all times for all frequency deviation trajectories in a functional uncertainty set that encodes applicable legislation. Faithfully modeling the energy conversion losses during battery charging and discharging renders this optimization problem non-convex. \textbf{Methodology/results:} By exploiting a total unimodularity property of the uncertainty set and an exact linear decision rule reformulation, we prove that this non-convex robust optimization problem with functional uncertainties is equivalent to a tractable linear program. Through extensive numerical experiments using real-world data, we quantify the economic value of vehicle-to-grid and elucidate the financial incentives of vehicle owners, aggregators, equipment manufacturers, and regulators. \textbf{Managerial implications:} We find that the prevailing penalties for non-delivery of promised regulation power are too low to incentivize vehicle owners to honor the delivery guarantees given to grid operators.
		
% \textbf{Keywords:} Vehicle-to-Grid, Frequency Regulation, Energy Storage, Energy Economics, Robust Optimization, Continuous-Time Linear Programming.

\section{Introduction} 

Replacing internal combustion engine vehicles with electric vehicles reduces urban air pollution and mitigates climate change if electricity is generated from renewable sources~\citep{DS94}. In general, privately owned vehicles are a vastly underutilized resource. Vehicle usage data collected by the US~\cite{NHTS17} shows that on an average day over $90$\% of all privately owned vehicles are parked at any one time---even during peak rush hour. Since electricity grids require storage capacity to integrate increasing amounts of intermittent wind and solar power, electric vehicle owners could capitalize on their batteries by offering storage to the electricity grid when their vehicles are parked. \cite{WK97} term this idea \textit{vehicle-to-grid}.

R\'eseau de transport d'\'electricit\'e (RTE), Europe's largest transmission system operator, expects to need an additional flexible generation and electricity storage capacity of $10$GW to $20$GW by 2035. This corresponds to $7.5$\% to $15$\% of the total French electricity generation capacity in 2017~(RTE~\citeyear{RTE17b, RTE17c}). If electric vehicles were to provide some of this flexibility, then the vehicles and the electricity grid could share the costs of electric vehicle batteries. \cite{WK05a} and \cite{LN19} have identified \textit{primary frequency regulation}, also known as
frequency containment reserves, as one of the most profitable flexibility services for vehicle-to-grid. Electric vehicles that provide this service must maintain a continuous power flow to the vehicle battery that is proportional to the deviation of the instantaneous grid frequency from its nominal value (\eg, 50Hz in Europe). As primary frequency regulation is the first flexibility service used to stabilize the electricity network after disturbances~\citep{YR07a}, its provision must be highly reliable. However, RTE~(\citeyear{RTE17b}) questions the reliability of vehicle-to-grid. The~\cite{EU17} addressed this concern by defining a minimum level of reliability that electric vehicles and other providers of frequency regulation must guarantee. Specifically, it demands that providers must be able to deliver regulation power for all frequency deviation trajectories with certain characteristics.

Adopting the perspective of a vehicle owner, we formulate an optimization model to determine the bidding strategy on the regulation market that maximizes the expected profit from selling primary frequency regulation to the transmission system operator under the reliability constraints imposed by the European Commission. These constraints must hold {\em robustly} for all frequency deviation trajectories in an uncertainty set consistent with applicable legislation. As these trajectories constitute continuous-time functions, we are confronted with a robust optimization problem with functional uncertainties. Moreover, the impossibility of simultaneously charging and discharging the battery---which amounts to dissipating energy through conversion losses and could be profitable when the battery is full and there is a reward for down-regulation (see, {\em e.g.}, \cite[p.~84]{JAT15})---renders the optimization problem non-convex. The main theoretical contribution of this paper is to show that the resulting non-convex robust optimization problem with functional uncertainties is equivalent to a tractable linear program. Specifically, this paper makes the following methodological contributions to robust optimization (see \cite{AB09} for a textbook introduction).
\begin{itemize}
\item We introduce new {\em uncertainty sets in function spaces} that capture those frequency deviation trajectories for which regulation providers must be able to deliver all promised regulation power. These uncertainty sets are reminiscent of the {\em budget uncertainty sets} by \cite{DB04} in finite-dimensional spaces, and their construction is inspired by EU legislation.
\item By leveraging a {\em total unimodularity property} of the proposed uncertainty sets and an {\em exact linear decision rule reformulation}, we prove that the worst-case frequency deviation scenarios in all (convex or non-convex) robust constraints of the vehicle owner's optimization problem can be found by solving continuous linear programs, which can be viewed as variants of the so-called {\em separated continuous linear programs} introduced by \cite{EA83}.
\item By demonstrating that all these continuous linear programs are solved by piecewise constant frequency deviation trajectories, we show that the robust optimization problem with functional uncertainties is equivalent to a {\em robust optimization problem in discrete time}. In doing so, we use more direct proof techniques than \cite{MP95}, who derived sufficient conditions under which the solutions of separated continuous linear programs are piecewise constant. 
\item The robust optimization problem obtained by time discretization is still non-convex. Using the structural properties of its (discretized) uncertainty sets and of its objective and constraint functions, however, we can prove that it is equivalent to a {\em linear} robust optimization problem that can be reformulated as a {\em tractable linear program} via standard techniques.
\end{itemize}

To our best knowledge, robust optimization models with uncertainty sets embedded in function spaces have so far only been considered in the context of robust control, where the primary goal is to develop algorithms for evaluating conservative approximations \citep{BH11}, and in the context of robust continuous linear programming, where the primary goal is to reduce robust to {\em non}-robust continuous linear programs, which can be addressed with existing algorithms \citep{ghate20}. In contrast, we study here a non-convex robust optimization problem with functional uncertainties that admits a lossless time discretization and can be reformulated {\em exactly} as a tractable linear program. Remarkably, the state-of-the-art methods for solving the deterministic counterparts of this robust optimization problem are based on methods from mixed-integer linear programming. To our best knowledge, we thus describe the first class of practically relevant mixed-integer linear programs that simplify to standard linear programs through robustification.

As the emerging linear programs are amenable to efficient numerical solution, we are able to perform extensive numerical experiments based on real-world data pertaining to the French electricity system. We define the~\emph{value of vehicle-to-grid} as the profit from selling primary frequency regulation relative to a baseline scenario in which the vehicle owner does not offer grid services. As our optimization model faithfully captures effective legislation, it enables us to quantify the true value of vehicle-to-grid. This capability is relevant for understanding the economic incentives of different stakeholders such as vehicle owners, aggregators, equipment manufacturers, and regulators. The model developed in this paper enables us to assess how the value of vehicle-to-grid depends on the penalties for non-delivery of promised regulation power, the size of the uncertainty set, and the vehicle's battery and charger. We thus contribute to the growing operations management literature on vehicle-to-grid~\citep{GB17, HYM20, YZ21, WQ21}. The main insights drawn from our computational experiments can be summarized as follows.
\begin{itemize}
	\item Based on 2016--2019 data, we show that the value of vehicle-to-grid attainable with a bidding strategy that is {\em guaranteed} to satisfy all reliability requirements is around $100$\EUR per year and vehicle. Earlier studies based on anticipative bidding strategies have estimated this value to be four times higher~\citep{PC15, OB19}.
	\item We find a similar value of vehicle-to-grid as \cite{PC15} and \cite{OB19} if the vehicle owner risks financial penalties for ignoring the legal reliability requirements. This suggests that \emph{current penalties are too low} to incentivize vehicle owners to respect the law.
	\item We show that the value of vehicle-to-grid saturates at daily plug times above 15 hours. Thus, maximal profits from frequency regulation can be reaped even if the vehicle is disconnected from the grid up to 9 hours per day. This means that vehicle owners still enjoy considerable flexibility as to when to drive, which could help to promote the adoption of vehicle-to-grid. 
    \item We formulate an optimization problem for a vehicle aggregator that allows for asymmetric bids of individual vehicles and show that this problem also admits an exact linear programming reformulation. In practical terms, we find that allowing for asymmetric bids can increase the profits from frequency regulation by up to 40\% for a fleet of electric vehicles with bidirectional and unidirectional chargers.
\end{itemize}

Beyond vehicle-to-grid, this paper contributes to the literature on the optimal usage of energy storage assets. The value of a storage asset is usually identified with the profit that can be generated through arbitrage by trading the stored commodity on spot or forward markets.
Unlike traditional centralized storage assets, decentralized storage assets such as electric vehicles are usually connected to distribution rather than transmission grids. This means that they face retail and not wholesale electricity prices. While wholesale prices are determined by market mechanisms and thus stochastic, retail prices are often regulated and thus deterministic. Another major difference is that it may take several days to fully charge or discharge centralized storage assets such as hydropower plants, whereas the batteries of electric vehicles can be fully charged and discharged in just a few hours. A daily planning horizon is therefore sufficient for optimizing their usage. In addition, typical vehicle owners can anticipate their driving needs at most one day in advance. One can thus solve the storage management problem in a receding horizon fashion. 
	
The state-of-charge of a vehicle battery depends non-linearly on the power in- and outflows, which leads to non-convex optimization models. If the battery is merely used for arbitrage and market prices are non-negative, then these optimization models admit exact convex relaxations. Conversely, if the battery is used for frequency regulation or if market prices can fall below zero, then a non-convex constraint is needed to prevent the models from dissipating energy by simultaneously charging and discharging the battery~\citep{YZ16}. If energy conversion losses are negligible and the battery state-of-charge is thus linear in the power flows, then one can model the provision of frequency regulation through adjustable uncertainty sets. Such an approach has been proposed by \cite{XZ17} for frequency regulation with building appliances. A stochastic dynamic programming scheme for optimizing the charging and discharging policy of an electric vehicle with linear battery dynamics is proposed by \cite{JD14}. If energy conversion losses are significant, however, one may still approximate the state-of-charge by a linear decision rule of the uncertain frequency deviations~\citep{JW13}. \cite{ES12} study a similar model under the assumption of perfect foresight.

In practice, several hundreds or thousands of electric vehicles must be aggregated to be able to bid enough reserve power to qualify for participation in the frequency regulation market. \cite{CG09b}, \cite{SH10}, \cite{GW18} and \cite{YZ21} develop frameworks for controlling the batteries of aggregated vehicles, while the design of contracts between aggregators and vehicle owners is examined by \cite{SH11} and \cite{GB17}. The policy implications for the market entry of electric vehicle aggregators are investigated by \cite{OB18b}. Yet the study of vehicle-to-grid schemes for individual vehicles remains relevant because they constitute important building blocks for aggregation schemes and because they still pose many challenges---especially when it comes to faithfully modeling all major sources of uncertainty.

The model developed in this paper is most closely related to the discrete-time robust optimization models by \cite{EY17} and \cite{EN19}, which capture the uncertainty of the frequency deviations through simplicial uncertainty sets that cover all empirical frequency deviation scenarios. However, these uncertainty sets may fail to include unseen future frequency deviation scenarios and are inconsistent with applicable EU legislation. While \cite{EY17} disregard energy conversion losses, \cite{EN19} account for them heuristically and test the resulting charging and discharging policies experimentally on a real battery. Heuristics are also common in pilot projects that demonstrate the use of vehicle-to-grid for frequency regulation \citep{SV13, SV20}.

The model proposed in this paper relies on three simplifying assumptions that we justify below.

Our first key assumption is that the provision of frequency regulation has no negative impact on battery lifetime---even though the fear of battery degradation has been identified as a major obstacle to the widespread adoption of vehicle-to-grid \citep{EEVC17}. To justify this assumption, we point out that the impact of vehicle-to-grid on battery longevity is not yet well understood. In fact, \cite{MD17} claim that such degradation is severe, while \cite{KU17} claim that vehicle-to-grid may actually extend battery lifetime. In \citep{KU18}, the authors of these two studies reconcile their contradictory findings by concluding that the impact of vehicle-to-grid depends on the operating conditions of the battery, such as its temperature and variations in its state-of-charge. We further justify our no-degradation assumption by restricting the battery state-of-charge to lie within 20\% and 80\% of the nominal battery capacity. \cite{AT18} suggests these restrictions as a rule of thumb for extending the lifetime of common lithium-ion batteries, and \cite{TS17} adopt similar rules to optimize recharging policies of electric vehicles. Models that account for battery degradation are studied by \cite{GH16} and \cite{PC19}.

Our second key assumption is that vehicle owners can specify time and energy windows for their driving needs one day in advance. This assumption makes sense for commuters who adhere to predictable daily routines, for example.

The third key assumption is that the vehicle owners are price takers who influence neither the market prices nor the grid frequency. This assumption is reasonable because one vehicle may cover at most several kilowatts of the 700~megawatts required for frequency regulation in France. A model of a regulation provider influencing the grid frequency is described by~\cite{PM09}.

The paper proceeds as follows. Section~\ref{sec:Prob_Des} formulates the vehicle owner's decision problem as a non-convex robust program with functional uncertainties. In Sections~\ref{sec:time_dis} and~\ref{sec:LPR} we reformulate this problem as a non-convex robust program with vectorial uncertainties and even as a tractable linear program, respectively. In Appendix~\ref{sec:aggregator}, we extend these results to an aggregator who manages a fleet of electric vehicles. Numerical experiments are discussed in Section~\ref{sec:NumEx}, and policy insights are distilled in Section~\ref{sec:conclusions}. All proofs are relegated to the online supplement.

\paragraph*{Notation.} All random variables are designated by tilde signs. Their realizations are denoted by the same symbols without tildes. Vectors and matrices are denoted by lowercase and uppercase boldface letters, respectively. For any $z \in \mathbb{R}$, we define $[ z ]^+ = \max \{z,0\}$ and $[z]^- = \max\{-z,0\}$ such that $z=[z]^+-[z]^-$. The intersection of a set~$\set{A} \subseteq \R^d $ with~$\R^d_+$ is denoted by~$\set{A}^+$. For any closed intervals $\set{T},\set{U} \subseteq \R$, we define~$\set{L}(\T,\set{U})$ as the space of all Riemann integrable functions~$f: \T \to \set{U}$, and we denote the intersection of a set~$\set{B} \subseteq \set{L}(\T,\R)$ with~$\set{L}(\T,\R_+)$ as~$\set{B}^+$.

\section{Problem Description}\label{sec:Prob_Des}
Consider an electric vehicle whose state at any time~$t$ is characterized by the amount of energy~$y(t)$ stored in its battery and the instantaneous power consumption for driving~$d(t)$. We require that $y(t)$~is never smaller than~$\ubar{y}$ and never larger than~$\bar{y}$. To mitigate battery degradation, we set these limits to~20\% and~80\% of the nominal battery capacity, respectively. The battery interacts with the power grid through a bidirectional charger with charging efficiency~$\eta^+ \in (0,1]$ and discharging efficiency $\eta^- \in (0,1]$, where an efficiency of~$1$ corresponds to a lossless energy~conversion between the grid and the battery. The charger is further characterized by its maximum power consumption~$\bar{y}^+(t)$ from the grid and its maximum power provision to the grid~$\bar{y}^-(t)$. The power the battery can charge or discharge is therefore limited by~$\eta^+\bar{y}^+(t)$ and~$\frac{1}{\eta^-}\bar{y}^-(t)$, respectively. Note that~$\bar{y}^+(t)$ and~$\bar{y}^-(t)$ depend on the charger to which the vehicle is connected at time~$t$. When the vehicle is not connected to any charger, \eg, when it is driving, then both~$\bar{y}^+(t)$ and~$\bar{y}^-(t)$ must vanish. A stationary battery can be modeled by setting $d(t)=0$ and keeping $\bar{y}^+(t)$ and $\bar{y}^-(t)$ constant for all $t$.

In order to charge the battery at time $t$, the vehicle owner may buy power~$x^b(t)$ from the local utility at a known time-varying price~$p^b(t)$ as is the case under dynamic pricing schemes or day/night tariffs. In addition, she may also use the vehicle battery to earn extra revenue by providing primary frequency regulation, which can be viewed as an insurance bought by the transmission system operator (TSO) to balance unforeseen mismatches of electricity demand and supply in real time~\citep{JDG10}. If there is more supply than demand, the frequency of the power grid rises. Conversely, if there is more demand than supply, the frequency falls. A battery owner offering regulation power~$x^r(t)$ at time~$t$ is obliged to change her nominal power consumption~$x^b(t)$ from the grid by~$\delta(t) x^r(t)$, where~$\delta(t)$ quantifies the normalized deviation of the instantaneous grid frequency~$f(t)$ from its nominal value~$f_0$~\citep{RTE09}. Formally, we have 
\begin{equation*}
\delta(t) = \left\{ \begin{array}{ll}
+1 	& \text{if } f(t) > f_0 + \Delta f, \\
\frac{f(t)-f_0}{\Delta f} & \text{if } f_0 - \Delta f \leq f(t) \leq f_0 + \Delta f, \\
-1	& \text{if } f(t) < f_0 - \Delta f,
\end{array}\right.
\end{equation*}
where $\Delta f > 0$ is a threshold beyond which all promised regulation power must be delivered.

The TSO contracts frequency regulation as an insurance over a prescribed planning horizon of length~$T$, \eg, one day. The planning horizon is subdivided into trading intervals~$\mathcal{T}_k=[ (k-1)\Delta t, k \Delta t )$ for all~$k \in \K = \{1,\ldots,K\}$, where $K = \frac{T}{\Delta t} \in \mathbb{N}$. In the French electricity market, for example, the length $\Delta t$ of a trading interval is $30$~minutes.
The TSO requests the vehicle owner to announce the market decisions
$x^b(t)$ and~$x^r(t)$ before the beginning of the planning horizon, \eg, one day ahead at noon~(RTE \citeyear{RTE17}). These decisions need to be piecewise constant over the trading intervals. The TSO compensates the vehicle owner for the frequency regulation~$x^r(t)$ made available at the {\em availability price}~$p^a(t)$ and charges her for the increase~$\delta(t) x^r(t)$ in her power consumption at the {\em delivery price}~$p^d(t)$ as laid out by French market rules. Note that this charge becomes negative (\ie, it becomes a remuneration) if $\delta(t)$ is negative. In summary, the vehicle owner's total cost over the planning horizon~$\mathcal{T} = [0,T]$ amounts to
\begin{equation*}
	\int_0^T p^b(t) x^b(t) - \left( p^a(t) - \delta(t)  p^d(t)\right) x^r(t) \, \mathrm{d}t.
\end{equation*}

The impact of providing frequency regulation on the battery state-of-charge depends on how the vehicle owner adjusts the power consumed from and the power injected into the grid to achieve the desired net power consumption $x^b(t) + \delta(t) x^r(t)$. The most energy-efficient way is to avoid unnecessary energy conversion losses resulting from simultaneously charging and discharging. Sometimes, however, such losses can be attractive, for example if the battery is almost full and receives a request for down-regulation~($ \delta(t) > 0$). \cite{YZ16} show that energy losses can also be attractive when electricity prices are negative. Since common chargers are not able to simultaneously charge and discharge, we forbid this option and set the charging rate to
\begin{subequations}
\begin{equation}
y^+\left(x^b(t),x^r(t),\delta(t)\right) = \left[ x^b(t) + \delta(t) x^r(t) \right]^+ \label{eq:y+}
\end{equation}
and the discharging rate to
\begin{equation}
y^-\left(x^b(t),x^r(t),\delta(t)\right) = \left[ x^b(t) + \delta(t) x^r(t) \right]^-. \label{eq:y-}
\end{equation}

\end{subequations}

\begin{Rmk}
	When operating a vehicle fleet, some vehicles could charge while others discharge, which suggests that the regulation profits achievable with $n$~vehicles may exceed the regulation profit of a single vehicle multiplied by~$n$. In this paper, we focus on the case $n=1$.\hfill $\Box$
\end{Rmk}

The power exchanged with the grid and the power needed for driving determine the battery state-of-charge at any time $t$ via the integral equation
\begin{equation}
\label{eq:Inte}
y\left(x^b,x^r,\delta,y_0,t \right) = y_0 + \int_{0}^{t} \eta^+ y^+\left(x^b(t'),x^r(t'),\delta(t')\right) - \frac{y^-\left(x^b(t'),x^r(t'),\delta(t')\right)}{\eta^-} - d(t') \, \mathrm{d}t',
\end{equation}
where~$y_0$ represents the state-of-charge at time~$0$. 

At the time when the vehicle owner needs to choose and report the market commitments $x^b(t)$ and $x^r(t)$, she has no knowledge of the uncertain future frequency deviations $\delta(t)$ and the delivery prices $p^d(t)$ at time $t \in \set{T}$. In addition, she has no means to predict the battery state-of-charge $y_0$ at the beginning of the planning horizon, which depends on market commitments chosen on the previous day and on the uncertain frequency deviations to be revealed until time $0$. By contrast, the availability prices~$p^a(t)$ for~$t \in \T$ can be assumed to be known at the planning time. In practice, these prices are determined by an auction. As the vehicle owner bids an offer curve expressing~$x^r(t)$ as a function of~$p^a(t)$ for any~$t \in \T$, it is as if the availability prices were known upfront (the bidding process is described at \url{https://www.entsoe.eu/network_codes/eb/fcr/}). Next, we describe the information that is available about the uncertain problem parameters~$\delta$, $p^d$, and~$y_0$.

We first discuss the uncertainty in the frequency deviations, which limits the amount of reserve power that can be sold on the market. Indeed, the vehicle owner must ensure that the battery state-of-charge will never drop below $\ubar{y}$ or exceed $\bar{y}$ when the TSO requests down-regulation ($\delta(t) < 0$) or up-regulation ($\delta(t) > 0$), respectively, for a prescribed set of conceivable frequency deviation scenarios. Otherwise, the vehicle owner may not be able to honor her market commitments, in which case the TSO may charge a penalty or even ban her from the market.

The TSO defines under what conditions regulation providers must be able to deliver the promised regulation power, keeping in mind that extreme frequency deviations are uncommon. Indeed, between 2015 and 2018 the frequency deviation $\delta(t)$ has never attained its theoretical maximum of~$1$ or its theoretical minimum of~$-1$ in the French market. In the following, we thus assume that the vehicle owner needs to guarantee the delivery of regulation power \emph{only} for frequency deviation scenarios within the uncertainty set

\begin{equation*}
	\D = \left\{ \delta \in \mathcal{L} \left( \mathcal{T}, \left[-1,1\right] \right) :
		   \int_{\left[t-\Gamma \right]^+}^{t} \left\vert \delta(t') \right\vert \, \mathrm{d}t' \leq \gamma \quad \forall t \in \mathcal{T}\right\}
\end{equation*} 
parametrized by the duration~$\Gamma \in \R_+$ of a \emph{regulation cycle} and the duration~$\gamma \in \R_+$ of an \emph{activation period}. Throughout this paper, we assume that $0 < \gamma \leq \Gamma \leq T$. By focusing on frequency deviation scenarios in~$\D$, one stipulates that consecutive extreme frequency deviations $\delta(t) \in \{-1,1\}$ can occur at most over one activation period within each regulation cycle. The \emph{activation ratio}~$\gamma/\Gamma$ can thus be interpreted as the percentage of time during which the vehicle owner must be able to deliver all committed reserve power.

\begin{Rmk}\label{Rmk:setD}
	Note that the uncertainty set $\D$ grows with $\gamma$ and shrinks with $\Gamma$. \hfill $\Box$
\end{Rmk}

Besides displaying favorable computational properties, the uncertainty set~$\D$ has conceptual appeal because it formalizes the delivery guarantee rules prescribed by the~\citeauthor{EU17}~(2017, art.~156(10, 13b)). These rules stipulate that the ``\textit{minimum activation period to be ensured by} [frequency regulation] \textit{providers }[is not to be] \textit{greater than $30$~or smaller than $15$~minutes.}'' This guideline prompts us to set~$\gamma = 30$~minutes. The EU further demands that regulation providers ``\textit{shall ensure the recovery of} [their] \textit{energy reservoirs as soon as possible, within $2$~hours after the end of the alert state.}''
This means that for $2$~hours after the end of an alert state in which the regulation provider delivered frequency regulation, she will not have to deliver any additional frequency regulation, even if new alert states occur during this time. The law defines an \emph{``alert state''} as a state of normal operating conditions in which prespecified contingencies may lead to abnormal operating conditions but does not specify its duration. 
We conservatively consider a duration equal to the activation period, which maximizes the size of the uncertainty set. In fact, to be consistent, the worst-case alert state should be at least as long as the minimum activation period. The regulation cycle~$\Gamma$ is then equal in duration to the worst-case alert state plus the $2$~hour break from delivering frequency regulation. As the uncertainty set shrinks with $\Gamma$, considering the alert state to last as long as the activation period does indeed maximize the uncertainty set. Thus, we set~$\Gamma = 2.5$~hours.

In the following, we compare the empirical distribution of the daily variance of~$\delta$ between the years 2017 and 2019 with the maximum variance that can be achieved by any hypothetical frequency deviation scenario $\delta \in \D$ for a planning horizon of one day. By slight abuse of notation, we define the variance of a frequency deviation scenario $\delta$ with respect to zero as $\mathrm{Var}(\delta) = \frac{1}{T} \int_{0}^{T} \delta(t)^2 \, \mathrm{d}t$. This is justified because the TSO protects the system against \emph{unforeseen} demand and supply fluctuations, which means that the frequency deviations should be unbiased and thus vanish on average. Indeed, the empirical frequency deviations have an average of $5.98\cdot 10^{-4}$. Figure~\ref{fig:Daily_StdDev} shows that if $T=1$ day, $\gamma = 30$ minutes, and $\Gamma = 2.5$ hours, then the maximum standard deviation of any~$\delta \in \D$ exceeds the maximum empirical standard deviation by a factor of~$2.5$. Thus, $\D$ contains extreme frequency deviation scenarios with unrealistically high variance.
The optimization model developed below not only involves the conservative uncertainty set $\D$ compatible with the guidelines of the European Commission but also a smaller uncertainty set 
\begin{equation*}
\set{\hat{D}} = \left\{ \delta \in \mathcal{L} \left( \mathcal{T}, \left[-1,1\right] \right) :
\int_{[t - \hat{\Gamma}]^+}^t \left\vert \delta(t') \right\vert \, \mathrm{d}t' \leq \hat{\gamma} ~\forall t \in \T \right\}
\end{equation*}
parametrized by $\hat{\Gamma} \geq \Gamma$ and $\hat{\gamma} \leq \gamma$. This uncertainty set contains only frequency deviation scenarios that are likely to materialize under normal operating conditions. Note that $\hat{\D}$ is obtained from $\D$ by inflating $\Gamma$ to $\hat{\Gamma}$ and shrinking $\gamma$ to $\hat{\gamma}$. By Remark~\ref{Rmk:setD}, we may thus conclude that $\hat{\D}$ is indeed a subset of $\D$. While the pessimistic uncertainty set $\D$ is used to enforce the stringent delivery guarantees imposed by the European Commission, the more optimistic uncertainty set $\hat{\D}$ is used to model a softer reachability guarantee for the terminal state-of-charge. In the numerical experiments we will set $\hat{\Gamma} = T = 1$~day and $\hat{\gamma} = \gamma = 30$~minutes. One can show that the variance of all frequency deviation scenarios in $\hat{\D}$ is therefore bounded above by $\Delta t/T = 1/48$. Empirically, this threshold exceeds the variance of the frequency deviation on $99.2\%$~of all days in the years from 2017 to 2018.

\begin{figure}[!t]
	\centering
	\includegraphics[trim=0cm 0cm 0cm 0cm, clip, width=5.5in]{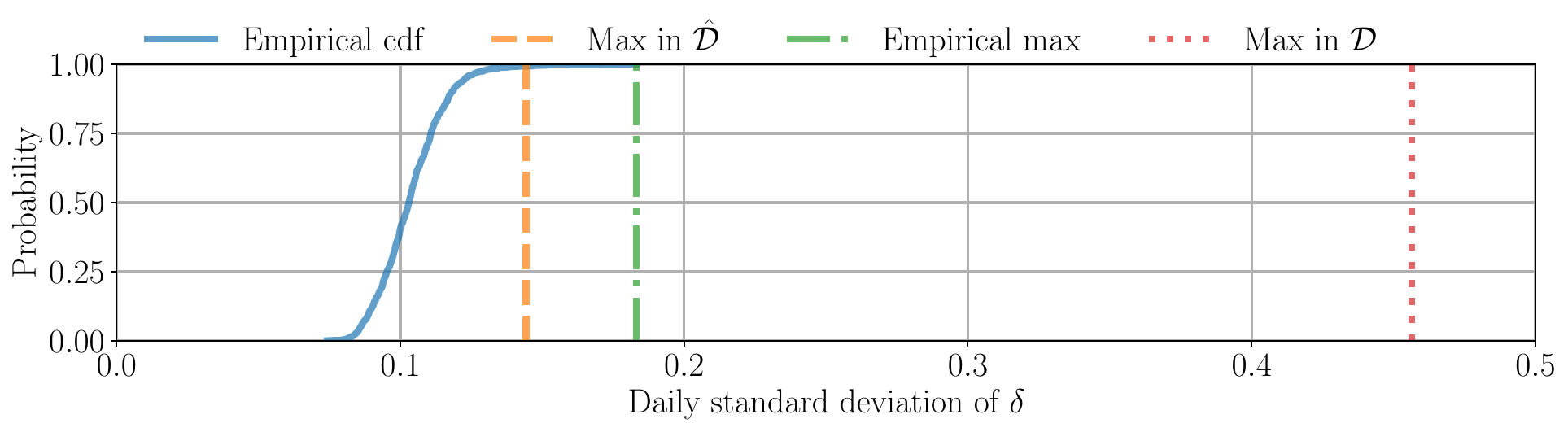}
	\caption{Empirical cumulative distribution function (cdf) of the daily standard deviation of~$\delta$ and the maximum standard deviation of any scenario in~$\hat{\D}$ or in $\D$.}
	\label{fig:Daily_StdDev}
\end{figure}

Next, we discuss the uncertainty in the initial battery state-of-charge $y_0$. Recall that $y_0$ is uncertain at the time when $x^b$ and $x^r$ are chosen because it depends on how much regulation energy must be provided until the beginning of the planning horizon. This quantity depends itself on uncertain frequency deviations that have not yet been revealed. We assume that the vehicle owner constructs two confidence intervals $\set{Y}_0 = [\ubar{y}_0, \bar{y}_0]$ and $\hat{\set{Y}}_0 = [\ubar{\hat{y}}_0, \hat{\bar{y}}_0]$ for $y_0$, either taking into account all frequency deviations under which she must imperatively be able to deliver regulation power or only those frequency deviations that are likely to occur under normal operating conditions.  

The only assumption we make about the uncertainty in the delivery price $p^d$ is that the vehicle owner can reliably estimate the \emph{expected} regulation price $p^r(t) = p^a(t) + \E[\tilde\delta(t) \tilde p^d (t)]$.

We are now ready to formalize the vehicle owner's decision problem for selecting the market decisions $x^b$ and $x^r$. The primary objective is to minimize the expected cost
\begin{align}
c(x^b,x^r) = \E \int_{\T} p^b(t) x^b(t) - \left( p^a(t) + \tilde \delta(t) \tilde p^d(t) \right) x^r(t) \, \mathrm{d}t' = \int_{\T} p^b(t) x^b(t) - p^r(t) x^r(t) \, \mathrm{d}t, \label{eq:obj}
\end{align}
while ensuring that $x^b$ and $x^r$ are robustly feasible across all frequency deviation scenarios $\delta \in \D$ and initial battery states $y_0 \in \set{Y}_0$. Mathematically, the charging rate $y^+(x^b(t),x^r(t),\delta(t))$, the discharging rate $y^-(x^b(t),x^r(t),\delta(t))$, and the battery state-of-charge $y(x^b,x^r,\delta,t,y_0)$ must therefore satisfy the robust constraints
\begin{empheq}[right=\empheqrbrace \text{$\forall t \in \T \text{, } \forall \delta \in \D \text{, } \forall y_0 \in \set{Y}_0 \text{.}$}]{align*}
y^+(x^b(t),x^r(t),\delta(t)) \leq \bar{y}^+(t), \qquad & y(x^b,x^r,\delta, y_0, t) \leq \bar{y},\\
y^-(x^b(t),x^r(t),\delta(t)) \leq \bar{y}^-(t), \qquad& y(x^b,x^r,\delta, y_0, t) \geq \ubar{y}
\end{empheq}

As the vehicle owner continues to use the vehicle for driving and for offering grid services after the end of the planning horizon, the battery should end up in a state that is ``\textit{conducive to satisfactory future operations}'' \citep[p.~1798]{WY85}. Consequently, the vehicle owner aims to steer $y(x^b,x^r,\delta,y_0,T)$ to a desirable state-of-charge~$y$. We assume that the cost-to-go of any~$y\in [\ubar y, \bar y]$ is quantified by a convex and piecewise affine value function~$\varphi(y)=\max_{n \in \set{N}} \{ q_n y + r_n\}$ determined by~$q_n,r_n\in\mathbb R$ for all~$n \in \set{N} = \{1, \ldots, N\}$. As $y_0$ and $\delta$ are uncertain, the terminal state-of-charge is also uncertain. To trade off present versus future costs, it is therefore reasonable to minimize $\varphi(y(x^b,x^r,\delta,y_0,T))$ in view of the worst of all scenarios $\delta \in \hat{\D}$ and $y_0 \in \hat{\set{Y}}_0$. This can be achieved by adding the term $\max_{\delta \in \hat{\D}} \max_{y_0 \in \hat{\set{Y}}_0} \varphi(y(x^b,x^r,\delta,y_0,T))$ to the objective function~\eqref{eq:obj}.

In summary, the vehicle owner's decision problem can be cast as the following robust optimization problem with continuous (functional) uncertain parameters,
\begin{equation}
\tag{R}
\label{pb:Rc}
\begin{array}{>{\displaystyle}c*3{>{\displaystyle}l}}
\min_{x^b, x^r \in \mathcal{X}} & \multicolumn{3}{>{\displaystyle}l}{c(x^b,x^r) + \max_{\delta \in \set{\hat{D}}, \, y_0 \in \set{\hat{Y}}_0} \varphi(y(x^b,x^r,\delta,y_0,T))} \\
\subj & y^+(x^b(t),x^r(t),\delta(t)) &\leq \bar{y}^+(t) & \forall \delta \in \D,~ \forall t \in \T \\
& y^-(x^b(t),x^r(t),\delta(t)) &\leq \bar{y}^-(t) & \forall \delta \in \D,~ \forall t \in \T \\
& y(x^b,x^r,\delta,y_0,t) &\leq \bar{y} & \forall \delta \in \D,~\forall t \in \T,~ \forall y_0 \in \set{Y}_0 \\
& y(x^b,x^r,\delta,y_0,t) &\geq \ubar{y} & \forall \delta \in \D,~ \forall t \in \T,~ \forall y_0 \in \set{Y}_0,
\end{array}
\end{equation}
where~$\mathcal X$ denotes the set of all functions in~$\mathcal{L}(\mathcal{T}, \R_+)$ that are constant on the trading intervals. Using the conservative uncertainty sets $\D$ and $\set{Y}_0$ in the constraints ensures that the delivery guarantee dictated by the European Commission can be fulfilled. Failing to fulfill this guarantee might lead to exclusion from the regulation market. In contrast, there are no drastic consequences of reaching an undesirable state-of-charge at time~$T$. Hence, we use the less conservative uncertainty sets $\hat{\D}$ and~$\hat{\set{Y}}_0$ in the objective function to steer the terminal state-of-charge toward a desirable value under all reasonably likely frequency deviation scenarios. The use of different uncertainty sets in the same model has previously been proposed in robust portfolio insurance problems \citep{SZ11}.

One can show that the function $y(x^b,x^r,\delta,y_0,t)$ is concave in the decision variables~$x^b$ and~$x^r$; see Propostion~\ref{Prop:y} in the online supplement. Upper bounds on this function thus constitute non-convex constraints. This implies that~\eqref{pb:Rc} represents a non-convex robust optimization problem with functional uncertain parameters. In general, such problems are severely intractable.

\begin{Rmk}[Uncertain driving patterns]\label{rmk:driving_patterns}
    Although model~\eqref{pb:Rc} assumes deterministic driving patterns, it readily extends to uncertain driving times and distances. If it is only known that the vehicle will drive at some time within a prespecified interval, then the vehicle owner must not plan on exchanging any electricity with the grid during that interval. Similarly, if it is only known that the vehicle will drive some distance within a certain range, then the vehicle owner must plan with the low end of the range for the constraint on the maximum state-of-charge and with the high end of the range for the constraint on the minimum state-of-charge. The worst-case driving times and distances are thus independent of the vehicle owner's decisions and can be determined \emph{ex-ante}.\hfill $\Box$
\end{Rmk}

\begin{Rmk}[Traditional charging stations]
Model~\eqref{pb:Rc} assumes that the vehicle exclusively connects to the grid through smart charging stations, which enable the vehicle to provide regulation power with or without the option of discharging into the grid. Traditional charging stations may not offer the option of providing frequency regulation. We can model such charging stations by adding the constraint $x^r(t) \leq \bar x^r(t)$ for all $t \in \T$, where $\bar x^r(t) = 0$ if the vehicle is not connected to a smart charger at time~$t$ and $ = \bar y^+(t)$ otherwise, which is always an upper
bound on $x^r(t)$.
\end{Rmk}

\section{Time Discretization}\label{sec:time_dis}

In order to derive a lossless time discretization of the frequency deviation scenarios in problem~\eqref{pb:Rc}, we assume from now on that the power demand for driving and the maximum charge and discharge power of the vehicle charger remain constant over the trading intervals. This assumption is justified because a vehicle that is both driving and parking in the same trading interval cannot offer constant market bids and is therefore unable to participate in the electricity market. Although the power demand for driving may fluctuate wildly, the battery state-of-charge cannot increase while the vehicle is driving, and therefore the power consumption for driving can be averaged over trading intervals without loss of generality. Note that we do \emph{not} assume the frequency deviation scenarios~$\delta$ to remain constant over the trading intervals. In practice $\delta$ may fluctuate on time scales of the order of milliseconds, and averaging out the frequency deviations across a trading interval could result in a dangerous oversimplification of reality. This phenomenon is illustrated in the following example.

\begin{Ex}[Risks of ignoring intra-period fluctuations]
\label{ex:discretization}
As the market decisions~$x^b$ and~$x^r$, the power demand~$d$ and the charging limits~$\bar y^+$ and~$\bar y^-$ are piecewise constant, one might be tempted to replace the frequency deviation signal~$\delta$ with a piecewise constant signal obtained by averaging~$\delta$ over the trading intervals. As we will see, however, averaging~$\delta$ relaxes the battery state-of-charge constraints. Decisions~$x^b$ and~$x^r$ that are {\em in}feasible under the true signal may therefore appear to be feasible under the averaged signal. Hence, replacing the true signal with the averaged signal could make it impossible for the vehicle owner to honor her market commitments. As a simple example, assume that~$x^b(t) = 0$ and $x^r(t)=x^r_1>0$ are constant and that the true frequency deviation signal averages to~0 over the first trading interval~$[0,\Delta t]$, that is, $\frac{1}{\Delta t}\int_0^{\Delta t} \delta(t) \mathrm{d}t = 0$. The left chart of Figure~\ref{fig:avg} visualizes two such signals, which display a small and a high total variation and are denoted by $\delta^{(1)}$ and~$\delta^{(2)}$, respectively. The constant signal equal to their (vanishing) average over~$[0,\Delta t]$ is denoted by~$\delta^{(3)}$. If~$\delta^{(1)}$ reflects reality but is incorrectly replaced with~$\delta^{(3)}$, we are led to believe that the state-of-charge will remain constant at~$y_0$. In reality, however, the battery dissipates the amount~$\Delta\eta \, x^r_1\Delta t/2$ of energy over the first trading interval, where~$\Delta \eta = \frac{1}{\eta^-} - \eta^+ \ge 0$, and the state-of-charge temporarily rises above~$y_0$ by~$\eta^+x^r_1\Delta t/2$. If~$\delta^{(2)}$ reflects reality, on the other hand, then the repeated charging and discharging of the battery still dissipates energy. See the right chart of Figure~\ref{fig:avg} for a visualization. While scenario~$\delta^{(1)}$ is contrived for maximum impact, scenario~$\delta^{(2)}$ rapidly fluctuates around~0 and thus captures a stylized fact that one would expect to see in reality. This example suggests that finding the minimum or the maximum of the state-of-charge over the entire planning horizon and over all signals~$\delta\in \D$ should be non-trivial because intra-period fluctuations {\em do} matter. As a further complication, note that the constraints of the uncertainty set~$\D$ couple the frequency deviations across time. \hfill $\Box$

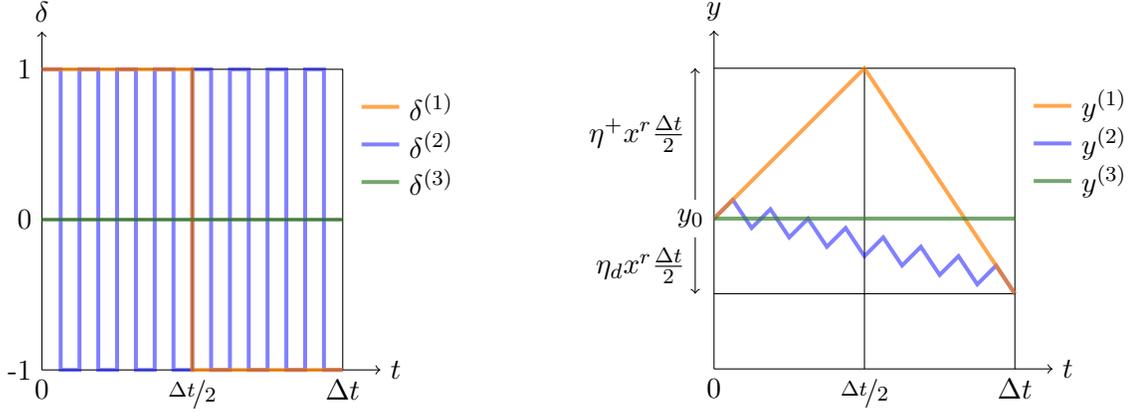
\begin{figure}[t]
	\begin{subfigure}{0.5\textwidth}
		\centering
		\begin{tikzpicture}[scale=0.75, transform shape]
			\draw[step = 2cm] (0, -2) grid (4, 2);
			\draw[->] (0, 2) -- (0, 2.5) node[above] {$\delta$};
			\draw[->] (4, -2) -- (4.5, -2) node[right] {$t$};
			\draw[-] (0,-2) -- (4,-2);
			\node[left] at (0, -2) {-1};
			\node[left] at (0, 0) {0};
			\node[left] at (0, 2) {1};
			\node[below] at (0, -2) {0};
			\node[below] at (2, -2) {$\nicefrac{\Delta t}{2}$};
			\node[below] at (4, -2) {$\Delta t$};
			\draw[color = blue, line width = 1.5pt, opacity = 0.5]
			(0.0, 2) -- (0.25, 2) -- (0.25, -2) -- (0.5, -2) -- (0.5, 2) -- (0.75, 2) -- (0.75, -2) -- (1.0, -2) -- (1.0, 2) -- (1.25, 2) -- (1.25, -2) -- (1.5, -2) -- (1.5, 2) -- (1.75, 2) -- (1.75, -2) -- (2.0, -2) -- (2.0, 2) -- (2.25, 2) -- (2.25, -2) -- (2.5, -2) -- (2.5, 2) -- (2.75, 2) -- (2.75, -2) -- (3.0, -2) -- (3.0, 2) -- (3.25, 2) -- (3.25, -2) -- (3.5, -2) -- (3.5, 2) -- (3.75, 2) -- (3.75, -2) -- (4.0, -2);
			\draw[color = blue, line width = 1.5pt, opacity = 0.5] 	(4.25, 1) -- (4.75, 1);
			\draw[color = orange, line width = 1.5pt, opacity = 0.7]
			(0,2) -- (2,2) -- (2, -2) -- (4, -2)
			(4.25, 1.5) -- (4.75, 1.5);
			\draw[color = OliveGreen, line width = 1.5pt, opacity = 0.7]
			(0,0) -- (4, 0)
			(4.25, 0.5) -- (4.75, 0.5);
			\node[right] at (4.75, 1.5) {$\delta^{(1)}$};
			\node[right] at (4.75, 1) {$\delta^{(2)}$};
			\node[right] at (4.75, 0.5) {$\delta^{(3)}$};
		\end{tikzpicture}
	\end{subfigure}
	\begin{subfigure}{0.5\textwidth}
		\centering
		\begin{tikzpicture}[scale=0.75, transform shape]
			\draw[->] (0, -2) -- (0, 2.5) node[above] {$y$};
			\draw[->] (0, -2) -- (4.5, -2) node[right] {$t$};
			\draw[-] (0,-1) -- (4,-1) (0, 2) -- (4, 2) (2, -2) -- (2, 2) (4, -2) -- (4, 2);
			% arrows
			\draw[->] (-0.25, 0.25) -- (-0.25, 2);
			\draw[->] (-0.25, -0.25) -- (-0.25, -1);
			\node[left] at (-0.25, -0.625) {$\eta_d x^r \frac{\Delta t}{2}$};
			\node[left] at (0, 0) {$y_0$};
			\node[left] at (-0.25, 1.125) {$\eta^+ x^r \frac{\Delta t}{2}$};
			\node[below] at (0, -2) {$0$};
			\node[below] at (2, -2) {$\nicefrac{\Delta t}{2}$};
			\node[below] at (4, -2) {$\Delta t$};
            \draw[color = blue, line width = 1.5pt, opacity = 0.5]
            (0.0, -0.0) -- (0.25,0.25) -- (0.5,-0.125) -- (0.75,0.125) -- (1.0,-0.25) -- (1.25,0.0) -- (1.5,-0.375) -- (1.75,-0.125) -- (2.0,-0.5) -- (2.25,-0.25) -- (2.5,-0.625) -- (2.75,-0.375) -- (3.0,-0.75) -- (3.25,-0.5) -- (3.5,-0.875) -- (3.75,-0.625) -- (4, -1);
			\draw[color = blue, line width = 1.5pt, opacity = 0.5]
			(4.25, 1) -- (4.75, 1);
			\draw[color = orange, line width = 1.5pt, opacity = 0.7]
			(0,0) -- (2, 2) -- (4, -1)
			(4.25, 1.5) -- (4.75, 1.5);
			\draw[color = OliveGreen, line width = 1.5pt, opacity = 0.7]
			(0,0) -- (4, 0)
			(4.25, 0.5) -- (4.75, 0.5);
			\node[right] at (4.75, 1.5) {$y^{(1)}$};
			\node[right] at (4.75, 1) {$y^{(2)}$};
			\node[right] at (4.75, 0.5) {$y^{(3)}$};
		\end{tikzpicture}
	\end{subfigure}
	\caption{Frequency deviation signals (left) and their state-of-charge trajectories (right).}
	\label{fig:avg}
\end{figure}
\end{Ex}

We will now argue that, in spite of Example~\ref{ex:discretization}, $\D$ and~$\hat\D$ can be restricted to contain only piecewise constant frequency deviation signals {\em without} relaxing problem~\eqref{pb:Rc}. 
To formalize the reasoning about piecewise constant functions, we introduce a lifting operator $L:\mathbb{R}^K \to \set{L}(\T,\mathbb{R})$ that maps any vector $\bm{v} \in \R^K$ to a piecewise constant function $L\bm{v}$ with $K$ pieces defined through $(L \bm{v} )(t) = v_k$ if $t \in \T_k$, $k \in \K  = \{1, \ldots, K\}$. We also introduce the adjoint operator $L^\dagger: \set{L}(\T,\mathbb{R}) \to \mathbb{R}^K$ that maps any function $w\in\set{L}(\T,\R)$ to a $K$-dimensional vector $L^\dagger w$ defined through $(L^\dagger w)_k = \frac{1}{\Delta t} \int_{\T_k}w(t) \, \mathrm{d}(t)$ for all $k \in \K$. Note that $L$ and $L^\dagger$ are indeed adjoint to each other because $\int_{\T} (L \bm{v})(t) w(t) \, \mathrm{d}t = \bm{v}^\top L^\dagger(w) $ for all $\bm{v} \in \mathbb{R}^K$ and $w \in \set{L}(\T,\mathbb{R})$. Mathematically, we impose from now on the following assumption.

\begin{Ass}\label{Ass:cst}
The functions $d$, $\bar{y}^+$ and $\bar{y}^-$ are piecewise constant, that is, there exist $\bm{d}, \bar{\bm{y}}^+, \bar{\bm{y}}^- \in \mathbb{R}^K$ such that $d = L\bm{d}$, $\bar{y}^+ = L \bar{\bm{y}}^+$ and $\bar{y}^- = L \bar{\bm{y}}^-$.
\end{Ass}

Next, we introduce a discretized uncertainty set
\begin{equation*}
\D_\K = \left\{ \bm{\delta} \in \left[-1,1\right]^K : \sum\limits_{l=1+ \left[k - \Gamma/\Delta t\right]^+}^{k} \vert \delta_l \vert \leq \frac{\gamma}{\Delta t} ~ \forall k \in \K \right\}
\end{equation*} 
reminiscent of $\D$, where $\Gamma/\Delta t$ and $\gamma/\Delta t$ count the trading intervals within a regulation cycle and an activation period, respectively. Similarly, we define a smaller discretized uncertainty set~$\hat{\D}_\K \subseteq \R^K$ reminiscent of $\hat{\D}$, which is obtained from $\D_{\K}$ by replacing $\Gamma$ with $\hat{\Gamma}$ and $\gamma$ with $\hat{\gamma}$. In the remainder we impose the following divisibility assumption.

\begin{Ass}\label{Ass:div}
The parameters $\Gamma$, $\gamma$, $\hat{\Gamma}$ and $\hat{\gamma}$ are (positive) multiplies of $\Delta t$.
\end{Ass}

Recall that $\Gamma$ and $\gamma$ characterize the delivery guarantees of frequency regulation bids, while $\Delta t$ characterizes the granularity of the market bids. In principle, $\Delta t$ is unrelated to $\gamma$ and $\Gamma$. Assumption~\ref{Ass:div} improves the tractability of model~\ref{pb:Rc} and comes at hardly any loss of generality. If the assumption does not hold but $\Delta t$, $\gamma$, and $\Gamma$ are rational, it can be enforced by reducing $\Delta t$ to the greatest common divisor of $\gamma$, $\Gamma$, and the original $\Delta t$, and by introducing linear coupling constraints that will ensure piecewise constant market decisions over the original trading intervals.

Next, we define the finite-dimensional feasible set $\set{X}_\K = L^\dagger \set{X}$. 
As $\set{X}$ contains only piecewise constant functions, we have $\set{X} = L \set{X}_\K$. We further define the cost function $c_\K(\bm{x^b}, \bm{x^r}) = c(L \bm{x^b},L \bm{x^r})$, which is linear in $\bm{x^b} \in \mathbb{R}^K$ and $\bm{x^r} \in \mathbb{R}^K$. In addition, for any $k \in \K$ we define the function
\begin{align}
	y_{k}\left(\bm{x^b},\bm{x^r},\bm{\delta},y_0\right) & = y\left(L\bm{x^b}, L\bm{x^r}, L \bm{\delta}, y_0, k \Delta t\right) \notag \\ 
	& = y_0 + \Delta t \sum_{l=1}^{k} \eta^+ y^+(x^b_l,x^r_l,\delta_l) - \frac{1}{\eta^-} y^-(x^b_l,x^r_l,\delta_l) - d_l,
\end{align}
which represents the battery state-of-charge at the end of period~$k$ under the assumption that both the market bids and the frequency deviations are piecewise constant.

We are now ready to define the discrete-time counterpart of the robust optimization problem~\eqref{pb:Rc}.
\begin{equation}
\tag{R$_\K$}
\label{pb:R}
\begin{array}{>{\displaystyle}c*3{>{\displaystyle}l}}
\min_{\bm{x^b},\bm{x^r} \in \set{X}_\K} & \multicolumn{3}{>{\displaystyle}l}{c_\K(\bm{x^b},\bm{x^r}) + \max_{\bm{\delta} \in \set{\hat{D}}_\K, y_0 \in \set{\hat{Y}}_0} \varphi(y_K(\bm{x^b},\bm{x^r},\bm{\delta},y_0))} \\
\subj & y^+(x^b_k,x^r_k,\delta_k) &\leq \bar{y}^+_k & \forall \bm{\delta} \in \D_\K, ~\forall k \in \K \\
& y^-(x^b_k,x^r_k,\delta_k) &\leq \bar{y}^-_k & \forall \bm{\delta} \in \D_\K, ~\forall k \in \K \\
& y_{k}(\bm{x^b},\bm{x^r},\bm{\delta},y_0) &\leq \bar{y} & \forall \bm{\delta} \in \D_\K, ~\forall k \in \K \cup \{0\}, ~\forall y_0 \in \set{Y}_0 \\
& y_{k}(\bm{x^b},\bm{x^r},\bm{\delta},y_0) &\geq \ubar{y} & \forall \bm{\delta} \in \D_\K, ~\forall k \in \K \cup \{0\}, ~\forall y_0 \in \set{Y}_0
\end{array}
\end{equation}

Unlike the original problem~\eqref{pb:Rc}, the discrete-time counterpart~\eqref{pb:R} constitutes a standard robust optimization problem that involves only finite-dimensional uncertain parameters. For this reason, there is hope that $\eqref{pb:R}$ is easier to solve than~$\eqref{pb:Rc}$. 

\begin{Th}[Lossless time discretization]\label{th:time}
	The problems~\eqref{pb:Rc} and~\eqref{pb:R} are equivalent.
\end{Th}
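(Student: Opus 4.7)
The plan is to establish the equivalence of \eqref{pb:Rc} and \eqref{pb:R} by matching their feasible sets and objective values after identifying $(x^b,x^r)=(L\bm{x^b},L\bm{x^r})$, which is permitted by Assumption~\ref{Ass:cst}. The linear cost $c$ then coincides with $c_\K$ by definition, so only the robust constraints and the penalty term remain.

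The easy direction---feasibility in \eqref{pb:R} implies feasibility in \eqref{pb:Rc}, with the corresponding inequality between the penalty terms---follows immediately from $L\D_\K\subseteq\D$ (Proposition~\ref{Prop:D}) and $\{k\Delta t:k\in\K\cup\{0\}\}\subseteq\T$. The converse direction requires showing that each continuous worst-case evaluation equals its discrete counterpart. For the pointwise constraints $y^\pm(x^b(t),x^r(t),\delta(t))\leq\bar y^\pm(t)$, this is immediate: on $\T_k$ the left-hand side depends only on $\delta(t)\in[-1,1]$, and every such value is attainable both by some $\delta\in\D$ (concentrated at $t$) and by some $\bm{\delta}\in\D_\K$ (with the other coordinates zero), where the latter is feasible because $\gamma\geq\Delta t$ under Assumption~\ref{Ass:div}.

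The core of the proof concerns the stock-of-charge constraints $\ubar y\leq y(x^b,x^r,\delta,y_0,t)\leq\bar y$. Writing $y(x^b,x^r,\delta,y_0,t)=y_0+\int_0^t[g_{l(s)}(\delta(s))-d_{l(s)}]\mathrm{d}s$ with the concave nondecreasing functions $g_l(v)=\min\{\eta^+(x^b_l+v x^r_l),\,(x^b_l+v x^r_l)/\eta^-\}$ from the proof of Proposition~\ref{Prop:y}, I would recast the continuous inner maximization $\sup_{\delta\in\D,\,t\in\T}y(x^b,x^r,\delta,y_0,t)$ as a continuous linear program in the spirit of Anderson's separated continuous LPs---splitting $\delta=\delta^+-\delta^-$ with $\delta^\pm\geq 0$ to linearize the budget $\int|\delta|$, and introducing an auxiliary integrand $z(s)$ constrained by the two affine inequalities $z(s)\leq\eta^+(x^b_{l(s)}+\delta(s)x^r_{l(s)})$ and $z(s)\leq (x^b_{l(s)}+\delta(s)x^r_{l(s)})/\eta^-$ to linearize $g_{l(s)}(\delta(s))$. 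Under Assumption~\ref{Ass:div} the budget-window endpoints $\tau,\tau-\Gamma$ align with the trading grid, so the LP's data is piecewise constant on $\{\T_l\}$. Invoking the total-unimodularity structure of the discretized uncertainty set promised in the introduction, this LP has an optimum $\delta^*$ that is piecewise constant on the grid, i.e., $\delta^*=L\bm\delta^*$ with $\bm\delta^*\in\D_\K$; and for such a piecewise-constant $\delta^*$, the trajectory $y(x^b,x^r,\delta^*,y_0,\cdot)$ is piecewise affine in $t$ and so attains its maximum over $\T$ at a grid point $k\Delta t$. This gives $\sup_{\delta\in\D,\,t\in\T}y=\max_{\bm\delta\in\D_\K,\,k\in\K\cup\{0\}}y_k$; the lower bound $\ubar y\leq y$ follows by the same argument applied to $-y$, and the objective's penalty term by applying the argument at $t=T$ with $\hat\D,\hat\D_\K,\hat{\set{Y}}_0$ in place of $\D,\D_\K,\set{Y}_0$ and using $|y-y^\star|=\max\{y-y^\star,\,y^\star-y\}$.

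The main obstacle I foresee is rigorously establishing the piecewise-constant-on-grid optimality of these continuous LPs: a naive Jensen-inequality argument shows that $LL^\dagger\delta$ dominates $\delta$ at grid points, but for an intermediate time $t^*\in((k-1)\Delta t,k\Delta t)$ this averaging can \emph{reduce} the value of $y(\cdot,t^*)$, so one cannot simply replace $\delta$ by its Jensen average. A more careful construction is required---either a delicate case analysis choosing $\bm\delta^*\in\D_\K$ (not equal to $L^\dagger\delta$) whose value on $\T_k$ adaptively covers the partial-interval contribution while respecting the discrete budget (exploiting the slack $\gamma\geq\Delta t$ from Assumption~\ref{Ass:div}), or directly invoking Anderson's structural theorem for separated continuous LPs, whose applicability hinges precisely on the grid-budget alignment guaranteed by Assumption~\ref{Ass:div}.
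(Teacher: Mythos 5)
Your overall architecture matches the paper's: identify decisions via the lifting operator, dispose of the cost and the pointwise constraints directly, use $L\D_\K\subseteq\D$ and $L^\dagger\D=\D_\K$ for the easy inclusions, and reduce the state-of-charge constraints to continuous linear programs whose optima are piecewise constant on the trading grid. The upper-bound constraint and the easy direction are handled essentially as in the paper, and you correctly flag the intermediate-time issue (the paper resolves it by an LP sensitivity analysis showing that $t\mapsto\min_{\delta\in\D}y(x^b,x^r,\delta,y_0,t)$ is \emph{affine} on each $\T_k$, using a totally unimodular basis matrix --- note that piecewise affinity of the trajectory for a \emph{fixed} $\delta^\star$ is not enough, since the worst-case scenario varies with $t$).

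The genuine gap is your claim that the lower bound $\ubar{y}\le y$ ``follows by the same argument applied to $-y$.'' It does not. Your linearization --- an auxiliary integrand $z(s)$ with $z(s)\le\eta^+(x^b+\delta x^r)$ and $z(s)\le(x^b+\delta x^r)/\eta^-$ --- is exact only when you \emph{maximize} $\int z$, because then $z$ is pushed up to the min of the two affine pieces. For the lower-bound constraint you must \emph{minimize} $\int_0^t g_{l(s)}(\delta(s))\,\mathrm{d}s$ with $g_l$ concave, i.e., maximize a convex functional over $\D$; the hypograph trick collapses ($\inf\int z=-\infty$), and no symmetric epigraph trick applies. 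This non-convex inner problem is precisely the crux of the paper's proof of Proposition~\ref{Prop:yubar}: one first shows (Lemma~\ref{lem:LDR_Opt}, via total unimodularity of the \emph{refined} discretizations $\D^+_{KN}$ and a limiting argument as $N\to\infty$) that the minimum is attained at binary-valued scenarios $\delta\in\D^+\cap\set{L}(\T,\{0,1\})$, and only on binary values does the concave decision rule $\chi(\delta(t),t)$ coincide with the linear decision rule $m(t)\delta(t)$ with $m(t)=\max\{\eta^+x^r(t),\,x^r(t)/\eta^--\Delta\eta\,x^b(t)\}$, after which the problem genuinely is a continuous LP amenable to the discretization you describe. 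Without this exact linear decision rule step your proposal has no valid LP to which the total unimodularity and grid-alignment arguments could be applied for the lower bound, so the proof would fail exactly where the non-convexity of the original problem resides.
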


The equivalence of~\eqref{pb:Rc} and~\eqref{pb:R} is perhaps surprising in view of Example~\ref{ex:discretization}. It means that the worst-case frequency deviation scenarios are piecewise constant even though intra-period fluctuations matter. Theorem~\ref{th:time} is proved by showing that the four robust constraints in~\eqref{pb:Rc} with functional uncertainties are equivalent to the corresponding robust constraints in~\eqref{pb:R} with vectorial uncertainties and that the worst-case terminal cost functions in~\eqref{pb:Rc} and~\eqref{pb:R} coincide. For example, the equivalence of the first (second) robust constraints in~\eqref{pb:Rc} and~\eqref{pb:R} follows from the observation that, for any fixed~$t\in\mathcal T$, the left hand side of the first (second) constraint in~\eqref{pb:Rc} is maximized by a scenario~$\delta \in\D$ with~$\delta(t)=1$ ($\delta(t)=-1$), which exists by the definition of~$\D$. The last two robust constraints in~\eqref{pb:Rc} are nonlocal as they depend on the entire frequency deviation scenario~$\delta$ and not only on its value at a particular time. Thus, they are significantly more intricate. The robust upper bound on the state-of-charge can be reformulated as an upper bound on $\max_{t\in\mathcal T} \max_{\delta\in \D}y(x^b,x^r,\delta,y_0,t)$. Thanks to the properties of the state-of-charge and the uncertainty set, one can then show that the maximum over~$t\in\mathcal T$ must be attained at~$t=k\Delta t$ for some~$k\in\mathcal K\cup\{0\}$ and that for any such~$t$ the state-of-charge can be expressed as an integral of~$\delta$ against a piecewise constant function. Thus, averaging~$\delta$ across the trading intervals has no impact on the state-of-charge, which in turn allows us to focus on piecewise constant scenarios without restricting generality. The robust lower bound on the state-of-charge in~\eqref{pb:Rc} can be reformulated as a lower bound on $\min_{t\in\mathcal T} \min_{\delta\in \D}y(x^b,x^r,\delta,y_0,t)$, which appears to be intractable because the optimization problem over~$\delta$ minimizes a concave function over a convex feasible set and is therefore non-convex. Classical robust optimization provides no general recipe for handling such constraints even if the uncertain parameters are finite-dimensional, and state-of-the-art research settles for deriving approximations \citep{roos2018approximation}. By exploiting a continuous total unimodularity property of the uncertainty set~$\D$ facilitated by Assumption~\ref{Ass:div}, we first prove that the minimum of~$y(x^b,x^r,\delta,y_0,t)$ over~$\D$ is attained by a frequency deviation trajectory that takes only values in~$\{-1,0\}$. Next, we demonstrate that there exists an affine function of~$\delta$ that matches~$y(x^b,x^r,\delta,y_0,t)$ for all trajectories~$\delta\in\D$ valued in~$\{-1,0\}$ and for all~$t\in\mathcal T$. In the language of robust optimization,  the state-of-charge~$y(x^b,x^r,\delta,y_0,t)$ can be viewed as an analysis variable that adapts to the uncertainty~$\delta$, and the corresponding affine function constitutes a decision rule approximating~$y(x^b,x^r,\delta,y_0,t)$. Decision rule approximations almost invariably introduce approximation errors  \citep[\S~14]{AB09}.
However, the affine decision rule proposed here is error-free because it coincides with~$y(x^b,x^r,\delta,y_0,t)$ for all scenarios~$\delta\in\D$ valued in~$\{-1,0\}$ that may attain the worst case in~$\min_{t\in\mathcal T} \min_{\delta\in \D}y(x^b,x^r,\delta,y_0,t)$. Using this decision rule in an elaborate sensitivity analysis, we can finally prove that the minimum over~$t\in\mathcal T$ must be attained at~$t=k\Delta t$ for some~$k\in\mathcal K\cup\{0\}$ and that for any such~$t$ the state-of-charge can be expressed as an integral of~$\delta$ against a piecewise constant function. Thus, we may focus again on piecewise constant scenarios without restricting generality. The full proof of Theorem~\ref{th:time} can be found in the online supplement.

The new robust optimization techniques developed to prove Theorem~\ref{th:time} are of independent interest as they provide exact tractable reformulations for certain adjustable robust optimization problems with functional or vectorial uncertain parameters, where the embedded optimization problems over the uncertainty realizations are non-convex. We also note that the embedded optimization problems over~$\delta\in\D$ in problem~\eqref{pb:Rc} can be viewed as variants of the so-called {\em separated continuous linear programs} introduced by \cite{EA83}. The proof of Theorem~\ref{th:time} shows that these problems are solved by piecewise constant frequency deviation scenarios that can be computed efficiently, thereby extending the purely existential results by \cite{MP95}. 

Even though the non-convex robust optimization problem~\eqref{pb:Rc} with functional uncertainty admits a lossless time discretization, its discrete-time counterpart~\eqref{pb:R} still constitutes a non-convex robust optimization problem and thus appears to be hard. In the next section, however, we will show that~\eqref{pb:Rc} can be reformulated as a tractable linear program by exploiting its structural properties.

\section{Linear Programming Reformulation}
\label{sec:LPR}
In order to establish the tractability of the non-convex robust optimization problem~\eqref{pb:Rc}, it is useful to reformulate its time discretization~\eqref{pb:R} as the following \emph{linear} robust optimization problem, where all constraint functions are bilinear in the decision variables and the uncertain parameters.
\begin{equation}
	\tag{R$'_\K$}
	\label{pb:LR}
	\begin{array}{>{\displaystyle}c*3{>{\displaystyle}l}}
		\min & c_\K(\bm{x^b}, \bm{x^r}) + z \\
		\subj
		& \multicolumn{3}{*1{>{\displaystyle}l}}{
		\bm{x^b}, \bm{x^r} \in \set{X}_{\K}, ~ \bm m \in \mathbb{R}^K 
		}\\
		& \multicolumn{2}{*1{>{\displaystyle}l}}{
			x^r_k + x^b_k \leq \bar{y}^+_k, 
			\quad
			x^r_k - x^b_k \leq \bar{y}^-_k}
		& \forall k \in \K \\
		& \multicolumn{1}{*1{>{\displaystyle}l}}{
			m_k - \eta^+ x^r_k \geq 0,
			\quad
			m_k - \frac{1}{\eta^-} x^r_k + \Delta \eta \, x^b_k} & \geq 0
		& \forall k \in \K \\
		& \bar{y}_0 + \Delta t \sum_{l=1}^{k} \eta^+ \big( x^b_l + \delta_l x^r_l \big) - d_l & \leq \bar{y} & \forall \bm \delta \in \D_\K^+,~\forall k \in \K \cup \{0\} \\
		& \ubar{y}_0 + \Delta t \sum_{l=1}^{k} \eta^+ x^b_l - m_l \delta_l - d_l & \geq \ubar{y} & \forall \bm \delta \in \D_\K^+,~\forall k \in \K \cup \{0\} \\
		& \hat{\bar{y}}_0 + \Delta t \sum_{k=1}^{K} \eta^+ \big( x^b_k + \delta_k x^r_k \big) - d_k & \leq \frac{z-r_n}{q_n}
		& \forall \bm \delta \in \hat \D^+_\K,~ \forall n \in \mathcal{N}_+ \\
		& \hat{\ubar{y}}_0 + \Delta t \sum_{k=1}^{K} \eta^+ x^b_k - m_k \delta_k - d_k & \geq \frac{z-r_n}{q_n} & \forall \bm \delta \in \hat \D_\K^+,~ \forall n \in \mathcal{N}_-\\
		& z \ge r_n & & \forall n\in\N_0
	\end{array}
\end{equation}
Here, $\Delta \eta = \frac{1}{\eta^-} - \eta^+ \ge 0$ is used as a shorthand for the reduction in the battery state-of-charge resulting from first charging and then discharging one unit of energy as seen from the grid. In addition, we set $\set{N}_+ = \{n \in \set{N}: q_n > 0 \}$, $\set{N}_- = \{n \in \set{N}: q_n < 0 \}$ and $\set{N}_0 = \set{N} \setminus (\N_+\cup\N_-)$.

\begin{Th}[Lossless linearization]\label{th:lr}
	The problems~\eqref{pb:R} and~\eqref{pb:LR} are equivalent.
\end{Th}

The proof of Theorem~\ref{th:lr} critically relies on the exact affine decision rule approximation discovered in the proof of Theorem~\ref{th:time}. Note that the linear robust optimization problem~\eqref{pb:LR} still appears to be difficult because each robust constraint must hold for all frequency deviation scenarios in an uncountable uncertainty set~$\D_{\K}^+$ or~$\hat \D^+_\K$ and therefore corresponds to a continuum of ordinary linear constraints. Fortunately, standard robust optimization theory~\citep{AB04, DB04} allows us to reformulate~\eqref{pb:LR} as the tractable linear program

\begin{equation}
\label{pb:LP}
\tag{R$^{''}_\K$}
\begin{array}{*1{>{\displaystyle}c}*2{>{\displaystyle}l}}
\min & \multicolumn{2}{>{\displaystyle}l}{c_\K(\bm{x^b}, \bm{x^r})  + z} \\ 
\subj
& \multicolumn{2}{>{\displaystyle}l}{
\bm{x^b}, \bm{x^r} \in \set{X}_\K,~z\in\R, ~
\bm m, \bm \lambda^+, \bm \lambda^-, \bm \theta^+, \bm \theta^- \in \R^K_+,~
\bm \Lambda^+, \bm \Lambda^-, \bm \Theta^+, \bm \Theta^- \in \mathbb{R}^{K\times K}_+
} \\
& \multicolumn{1}{>{\displaystyle}l}{
x^r_k + x^b_k \leq \bar{y}^+_k,
\quad
x^r_k - x^b_k
\leq \bar{y}^-_k} & \forall k \in \K \\
& \multicolumn{1}{>{\displaystyle}l}{
m_k \geq \eta^+ x^r_k, 
\quad
m_k \geq \frac{1}{\eta^-}x^r_k - \Delta \eta \, x^b_k }
& \forall k \in \K \\	
& \sum_{l=1}^k \Delta t \left( \eta^+x^b_l + \Lambda^+_{k,l} - d_l \right)  + \gamma \Theta^+_{k,l} \leq \bar{y} - \bar y_0 & \forall k \in \K \cup\{0\}\\
&\sum_{l = 1}^k \Delta t \left( \eta^+x^b_l - \Lambda^-_{k,l} - d_l \right) - \gamma \Theta^-_{k,l} \geq \ubar{y} - \ubar y_0 & \forall k \in \K \cup\{0\}\\
& \sum_{k \in \K} \Delta t \left( \eta^+x^b_k + \lambda^+_{k} - d_k \right) + \hat \gamma \theta^+_{k} \leq \frac{z-r_n}{q_n} - \hat{\bar y}_0 & \forall n \in \mathcal{N}_+ \\
& \sum_{k \in \K} \Delta t \left( \eta^+x^b_k - \lambda^-_{k} - d_k \right) - \hat \gamma \theta^-_{k} \geq \frac{z-r_n}{q_n} - \hat{\ubar y}_0 & \forall n \in \mathcal{N}_- \\
& b_n\leq z & \forall n\in\N_0\\
& \Lambda^+_{k,l} + \sum\limits_{i = l}^{I(k,l)} \Theta^+_{k,i} \geq \eta^+ x^r_l, \quad
\Lambda^-_{k,l} + \sum\limits_{i = l}^{I(k,l)} \Theta^-_{k,i} \geq m_l & \forall k,l \in \K:\; l \leq k \\
& \lambda^+_{k} + \sum\limits_{i = k}^{\hat I(K,k)} \theta^+_{i} \geq \eta^+ x^r_k, \quad
\lambda^-_{k} + \sum\limits_{i = k}^{\hat I(K,k)} \theta^-_{i} \geq m_k & \forall k \in \K,
\end{array}
\end{equation}
where $I(k,l) = \min\{k, l+ \Gamma/\Delta t - 1\}$ and $\hat I(k,l) = \min\{k, l+ \hat \Gamma/\Delta t -1\}$.

\begin{Th}[Linear programming reformulation]\label{th:lp}
	The problems~\eqref{pb:LR} and~\eqref{pb:LP} are equivalent.
\end{Th}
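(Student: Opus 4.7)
The plan is to apply standard robust-optimization duality, one robust family at a time, to the four constraints in \eqref{pb:LR} whose right-hand sides are parametrized by a continuum of $\bm\delta$. Each such constraint requires an affine function of $\bm\delta$ to be dominated by a constant for every $\bm\delta$ in $\D_\K^+$ (or $\hat\D_\K^+$). As those uncertainty sets are non-empty polyhedra (the zero vector is feasible) contained in $[0,1]^K$, the embedded worst case is a bounded linear program in $\bm\delta$, so LP strong duality applies and transforms the inner maximization into an equivalent minimization over nonnegative dual multipliers.

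I would start with the upper state-of-charge constraint at period $k$. After peeling off the $\bm\delta$-independent terms, the worst case is
\begin{equation*}
\max_{\bm\delta}\; \eta^+ \Delta t \sum_{l=1}^{k} x^r_l \delta_l \quad \text{s.t.} \quad 0 \leq \delta_l \leq 1~\forall l \in \K, \quad \sum_{l=1+[k'-\Gamma/\Delta t]^+}^{k'} \delta_l \leq \gamma/\Delta t ~\forall k' \in \K.
\end{equation*}
Since $\delta_l$ with $l>k$ is absent from the objective and $\bm\delta \geq \bm 0$, those components can be set to zero without loss, which renders every window constraint with $k'>k$ redundant (it then sums only $\delta_l$'s already bounded by the window at $k'=k$). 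Assigning nonnegative multipliers $\Lambda^+_{k,l}$ to $\delta_l \leq 1$ and $\Theta^+_{k,k'}$ to the surviving window constraints, the LP dual has $l$-th feasibility inequality
$\Lambda^+_{k,l} + \sum_{k'=l}^{I(k,l)} \Theta^+_{k,k'} \geq \eta^+ x^r_l$, where $I(k,l) = \min\{k, l+\Gamma/\Delta t-1\}$ arises because $\delta_l$ participates precisely in the windows ending at $k' \in [l, l+\Gamma/\Delta t -1]$, capped at $k$. Strong duality then replaces the robust inequality with the condition that \emph{there exist} such multipliers whose dual objective, combined with the $\bm\delta$-independent terms $\Delta t \sum_{l\leq k}(\eta^+ x^b_l - d_l)$, is bounded above by $\bar y - \bar y_0$, which is exactly the third constraint family of \eqref{pb:LP} (up to the rescaling $\Delta t \Lambda^+$, $\gamma\Theta^+$ absorbed into the dual objective).

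The three remaining families are obtained by the same recipe. For the lower state-of-charge constraint, the $\bm\delta$-linear part is $-\Delta t \sum_l m_l \delta_l$; flipping signs and dualizing the inner $\max\; \Delta t\sum_l m_l \delta_l$ yields multipliers $\Lambda^-_{k,l},\Theta^-_{k,k'}$ with the analogous feasibility constraint $\Lambda^-_{k,l}+\sum_{k'=l}^{I(k,l)}\Theta^-_{k,k'}\geq m_l$, producing the fourth family of \eqref{pb:LP}. The two terminal-state constraints have the same affine-in-$\bm\delta$ structure but over $\hat\D_\K^+$, which is identical in form to $\D_\K^+$ with $(\hat\Gamma,\hat\gamma)$ in place of $(\Gamma,\gamma)$; dualization produces the multipliers $(\bm\lambda^\pm,\bm\theta^\pm)$ and the summation range $\{k,\ldots,\hat I(K,k)\}$ with $\hat I(K,k)=\min\{K, k+\hat\Gamma/\Delta t -1\}$, because the horizon for the terminal constraint is the whole $\K$ and $\delta_k$ participates in windows ending at $k' \in [k, k+\hat\Gamma/\Delta t -1]$.

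Finally, since \eqref{pb:LR} is a minimization, each existential ``there exist dual-feasible multipliers with small dual objective'' can be absorbed by promoting the multipliers to outer decision variables, which reproduces \eqref{pb:LP} term by term. I expect no genuine obstacle: LP strong duality is immediate from feasibility and boundedness of the inner problems, and the only bookkeeping tasks are (i) justifying the truncation to $l \leq k$ in the path-wise constraints and (ii) correctly identifying, for each $l$, which window constraints contain $\delta_l$, which is the sole source of the summation caps $I(k,l)$ and $\hat I(K,k)$.
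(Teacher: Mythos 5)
Your proposal is correct and follows essentially the same route as the paper: for each robust constraint family you dualize the embedded worst-case linear program over $\D_\K^+$ (resp.~$\hat\D_\K^+$) via strong LP duality and promote the dual multipliers to outer decision variables. The index bookkeeping you describe (the truncation to $l \leq k$ and the caps $I(k,l)$ and $\hat I(K,k)$ identifying which window constraints contain $\delta_l$) matches the paper's dual \eqref{eq:min_dual} exactly.
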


The conversion of the robust optimization problem~\eqref{pb:LR} to the linear program~\eqref{pb:LP} comes at the expense of introducing $4K^2 + 4K$ dual variables. Overall, the linear program~\eqref{pb:LP} involves $4K^2 + 7K + 1$ variables and $K^2 + 9 K + N + 2$~constraints, that is, its size scales quadratically with~$K$. In conjunction, Theorems~1--3 imply that the non-convex robust optimization problem~\eqref{pb:Rc} with continuous uncertain parameters can be reduced without any loss to the tractable linear program~\eqref{pb:LP}, which is amenable to efficient numerical solution with state-of-the-art linear programming solvers.

\begin{Rmk}[Robustification reduces complexity]
    A striking property of the robust optimization model~\eqref{pb:Rc} is that it is much {\em easier} to solve than the underlying deterministic model, which would assume precise knowledge of the frequency deviation scenario~$\delta$. Indeed, the textbook formulation of the deterministic model requires continuous decision variables to represent~$y^+(x^b(t),x^r(t),\delta(t))$ and~$y^-(x^b(t),x^r(t),\delta(t))$ and a binary decision variable to model their complementarity for every~$t~\in~\set T$ \cite[p.~85]{JAT15}. This results in a large-scale mixed-integer linear program even if~$\set T$ is discretized. In contrast, the robust optimization model~\eqref{pb:Rc} is equivalent to the tractable linear program~\eqref{pb:LP}. To our best knowledge, we have thus discovered the first practically interesting class of optimization problems that become dramatically easier through robustification. Since the uncertainty set~$\D$ can be seen as a continuous-time version of a classical budget uncertainty set~\citep{DB04,CB12}, we hypothesize that our approach applies to other robust optimization problem with continuous-time budget uncertainty sets. Budget uncertainty sets are very popular in classical robust optimization, and we suspect that there are similarly many applications for robust optimization with functional uncertainties.
    \hfill $\Box$
\end{Rmk}

\begin{Rmk}[Futility of solving a multistage model] The proposed model~\eqref{pb:Rc} looks only one day ahead and accounts for the future usage of the vehicle only through the cost-to-go function~$\varphi$, which depends on the terminal state-of-charge estimates. The basic model~\eqref{pb:Rc} can readily be extended to a robust dynamic program that looks $H$ days into the future. The cost-to-go functions of this dynamic program can be shown to be convex and piecewise affine. In addition, it is possible to construct tight convex piecewise affine bounds on these cost-to-go functions by solving a series of linear programs. Numerically, we find that the added value of solving a dynamic model that looks $H > 1$ days into the future is negligible, probably because electric vehicles can usually fully recharge overnight. We emphasize, however, that the robust optimization models and techniques developed in this paper may also be useful to optimize the operation of other energy storage devices that are characterized by slower dynamics and therefore necessitate a proper multi-stage approach. \hfill $\Box$
\end{Rmk}

\section{Numerical Experiments}\label{sec:NumEx}
In the following, we first describe how the vehicle owner's decision problem is parametrized from data, and we explain the backtesting procedure that is used to assess the performance of a given bidding strategy. Next, we present numerical results and discuss policy implications. All experiments are run on an Intel i7-6700 CPU with $3.40$GHz clock speed and $64$GB of RAM. All linear programs are solved with GUROBI~$9.1.2$ using its PYTHON interface. In order to ensure the reproducibility of our experiments, we provide links to all data sources and make our code available at \url{www.github.com/lauinger/reliable-frequency-regulation-through-vehicle-to-grid}. 

\subsection{Model Parametrization}\label{sec:Params}
Our experiments are based on availability and delivery prices as well as frequency measurements from the French transmission system. There have been two policy changes in frequency regulation since 2015. While the availability prices were historically kept constant throughout the year, they change on a weekly basis since mid-January 2017 and on a daily basis since July 2019. At this point, the pricing mechanism also changed from a pay-as-bid auction to a clearing price auction. The average availability price over all years from 2015 to 2019 amounts to $0.8$cts/kWh, but the yearly average \emph{decreased} in 2017 and 2018, and \emph{increased} again in 2019 to pre-2017 levels. For all practical purposes we may assume that the expected regulation price $p^r(t) = p^a(t) + \E[\tilde\delta(t) \tilde p^d (t)]$ coincides with the availability price~$p^a(t)$ because the realized regulation price~$p^a(t) + \delta(t) p^d(t)$ oscillates rapidly around the availability price~$p^a(t)$ due to intra-day fluctuations of the frequency-adjusted delivery prices; see Figure~\ref{fig:Reserve price} in Appendix~\ref{Apx:Data}. In fact, $\delta(t) p^d(t)$ empirically averages to~$-2.36 \cdot 10^{-5}$\EUR~over all 10s intervals from 2015 to 2019. We further identify the utility prices~$p^b(t)$ with the residential electricity prices charged by Electricité de France (EDF), the largest European electricity provider. These prices exhibit six different levels corresponding to peak- and off-peak hours on high, medium and low price days. High price days can occur exclusively on work days between November and March, whereas medium price days can occur on all days except Sundays. Low-price days can occur year-round. The peak hours are defined as the hours from 6~am to 10~pm on work days, and all the other hours are designated as off-peak hours. The prices corresponding to each type of day and hour are regulated and published in the official French government bulletin. From 2015 to 2019, these prices have not changed more than three times per year. On each day, RTE announces the next day's price levels by 10:30~am. The average utility price over the years 2015 to 2019 amounts to $14$cts/kWh, exceeding the average availability price by an order of magnitude. 

When simulating the impact of the market decisions on the battery state-of-charge, it is important to track the frequency signal with a high time resolution. In fact, the \cite{EU17} requires regulation providers to adjust the power flow between the battery and the grid every ten seconds to ensure that it closely matches $x^b(t) + \delta(t) x^r(t)$ for all $t \in \T$. We thus use a sampling rate of~$100$mHz when simulating the impact of the market decisions on the battery state-of-charge. 

The vehicle data is summarized in Table~\ref{tab:Params} in Appendix~\ref{Apx:Data}. The chosen parameter values are representative for commercially available midrange vehicle-to-grid-capable electric vehicles such as the 2018~Nissan Leaf. We assume that the vehicle owner reserves the time windows from 7~am to 9~am and from 5~pm to 7~pm on workdays and from 8~am to 8~pm on weekends and public holidays for driving. At all other times, the car is connected to a bidirectional charging station. We also assume that the car travels about 27km per day, which could be easily covered within one hour. However, it makes sense to reserve extended time slots for driving because vehicle owners may not be able to (nor wish to) pinpoint the exact driving times one day in advance.

\subsection{Backtesting Procedure and Baseline Strategy}\label{sec:backtest}

In our experiments, we assess the performance of different bidding strategies over different test datasets covering one of the years between 2015 and 2019. A bidding strategy is any procedure that computes on each day at noon a pair of market decisions $x^b$ and $x^r$ for the following day. We call a strategy \emph{non-anticipative} if it determines the market decisions using only information observed in the past. In addition, we call a strategy \emph{feasible} if it allows the vehicle owner to honor all market commitments for all frequency deviation scenarios within the uncertainty set~$\D$.

To measure the profit generated by a particular strategy over one year of test data, we use the following backtesting procedure. On each day at noon we compute the market decisions for the following day. We then use the actual frequency deviation data between noon and midnight and the market decisions for the current day to calculate the true battery state-of-charge at midnight. Next, we use the frequency deviation data of the following day to calculate the revenue from selling regulation power to the TSO, which is subtracted from the cost of buying electricity for charging the battery. If the strategy is infeasible and the vehicle owner is not able to deliver all promised regulation power even though the realized frequency deviation trajectory falls within the uncertainty set~$\D$, then she pays a penalty. The penalty at time~$t$ is set to~$k_{\mathrm{pen}}\cdot p^a(t)\cdot(x^r(t) - x^r_\mathrm{d}(t))$, where $x^r_\mathrm{d}(t)$ denotes the maximum amount of regulation power that could have been offered without risking an infeasibility, and the penalty factor~$k_{\mathrm{pen}}$ ranges from~$3$ to~$10$, \eg, RTE~(\citeyear{RTE17}) sets $k_{\mathrm{pen}} = 5$. Repeated offenses may even lead to market exclusion. For simplicity, in each experiment we either assume that the vehicle owner pays a penalty corresponding to a fixed value of $k_{\mathrm{pen}}$ for every offense or is excluded from the regulation market directly upon the first offense. 
If the battery is depleted during a trip, any missing energy needed for driving is acquired at a high price~$p^y$ from a public fast charging station. We assume that~$p^y$ accounts for the price of energy as well as for the opportunity cost of the time lost in driving to the charging station and waiting to be serviced. In our experiments we set $p^y$ either to 0.75\EUR/kWh 
(which corresponds to typical energy prices offered by the European fast charging network Ionity), to~7.5\EUR/kWh or to~75\EUR/kWh. The procedure described above is repeated on each day, and the resulting daily profits are accumulated over the entire test dataset.

Our baseline strategy is to determine the next day's market decisions by solving the robust optimization problem~\eqref{pb:Rc} with terminal cost function~$\varphi(y) = p^\star \vert y - y^\star \vert$, where the calibration of~$y^\star$ and~$p^\star$ is described below. Thus, \eqref{pb:Rc} is equivalent to an instance of the linear program~\eqref{pb:LP}. This problem is updated on each day because the driving pattern~$d$ as well as the market prices~$p^b$ and~$p^r$ change, and because the uncertainty sets $\set{Y}_0$ and $\hat{\set{Y}}_0$ for the state-of-charge at midnight depend on the state-of-charge at noon and on the market commitments between noon and midnight that were chosen one day earlier. The baseline strategy is feasible thanks to the robust constraints in~\eqref{pb:Rc}, which ensure that regulation power can be provided for all frequency deviation scenarios in~$\D$.

The parameters $p^\star$ and $y^\star$ are kept constant throughout each backtest. Specifically, we set $p^\star = \frac{3+k}{40}$\EUR/kWh for some $k = 1,\ldots,9$ and $y^\star = (\frac{\bar{y} + \ubar{y}}{2} + l)$kWh  for some $l = 0,\ldots,5$. Considering a larger search space did not lead to significantly better out-of-sample performance. Every tuple~$(p^\star, y^\star)$ encodes a different bidding strategy. Given a training dataset comprising one year of frequency measurements and market prices, we compute the cumulative profit of each strategy via the backtesting procedure outlined above, and we choose the tuple~$(p^\star,y^\star)$ that corresponds to the winning strategy. This strategy is non-anticipative if the year of the training dataset precedes the year of the test dataset. Perhaps surprisingly, we find that from 2017 onward, anticipative parameter tuning has no advantage over non-anticipative tuning. From now on, we thus tune~$p^\star$ and~$y^\star$ non-anticipatively using the year of training data immediately prior to the test dataset. 

\subsection{Experiments: Set-up, Results and Discussion}\label{sec:restuls-discussion}

In the remainder, we distinguish six different simulation scenarios. The nominal scenario uses the parameters of Table~\ref{tab:Params} in Appendix~\ref{Apx:Data} for both training and testing. All other scenarios are based on slightly modified parameters. Specifically, we consider a lossless energy conversion scenario, which trains and tests the baseline strategy under the assumption that $\eta^+=\eta^-=1$. A variant of this scenario assumes lossless energy conversion in training but tests the resulting strategy under the nominal values of $\eta^+$ and $\eta^-$. We also consider two scenarios with weaker robustness guarantees that replace the uncertainty set~$\D$ in the training phase with its subset~$\hat \D$. The resulting bidding strategy can be infeasible because it may fail to  provide the legally required amount of reserve power. The two scenarios do not differ in training but impose different sanctions for infeasibilities in testing. In the first of the two scenarios the vehicle owner is immediately excluded from the reserve market upon the first infeasibility, thus loosing the opportunity to earn money by offering grid services for the rest of the year. In the second scenario, the vehicle owner is penalized by $k_\mathrm{pen} \cdot p^a(t)$ with $k_\mathrm{pen}= 5$ for energy that is missing for frequency regulation (see also Section~\ref{sec:backtest}) and by $ p^y = 0.75$\EUR{}/kWh for energy that is missing for driving. Finally, we consider a scenario in which the vehicle is only equipped with a unidirectional charger, that is, we set $\bar y^-=0$. Thus, the vehicle is unable to feed power back into the grid. Requests for up-regulation ($\delta(t)<0)$ can therefore only be satisfied by consuming less energy, which is possible only if $\delta (t) x^r(t)\leq x^b(t)$. 

\begin{figure}[!t]
	\centering
	\includegraphics[trim=0cm 0cm 0cm 0.0cm, clip, width=\linewidth]{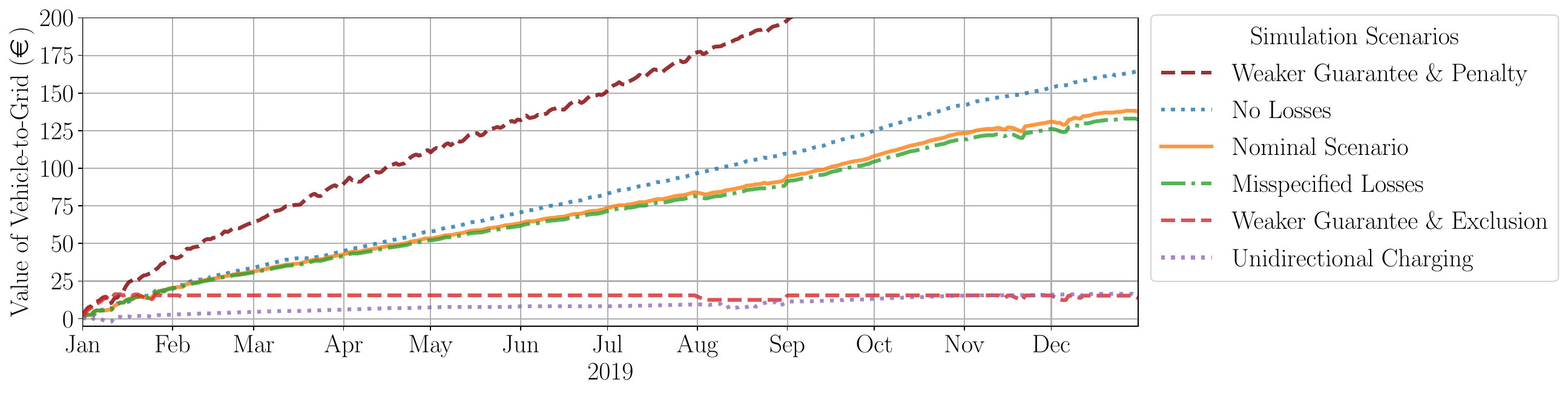}
	\caption{Value of vehicle-to-grid in 2019 under different simulation scenarios.}
	\label{fig:Results}
\end{figure}

Figure~\ref{fig:Results} visualizes the value of vehicle-to-grid in 2019 as a function of time for each of the six simulation scenarios. We first observe that the value of vehicle-to-grid in the lossless energy conversion scenario amounts to 165\EUR{} at the end of the year and thus significantly exceeds the respective value of 138\EUR{} in the nominal scenario. Using a perfectly efficient vehicle charger would thus have boosted the value of vehicle-to-grid by~$20$\% in 2019. This is not surprising because a perfect charger prevents costly energy conversion losses. Note also that the scenario with misspecified efficiency parameters results in almost the same value of vehicle-to-grid as the nominal scenario, which suggests that misrepresenting~$\eta^+$ or~$\eta^-$ in training has a negligible effect on the test performance. However, the underlying bidding strategy is~\emph{not} guaranteed to be feasible because it neglects energy conversion losses. Even though this strategy happens to remain feasible throughout~2019, it bears the risk of financial penalties or market exclusion. The two bidding strategies with weakened robustness guarantees initially reap high profits by aggressively participating in the reserve market, but they already fail in the first half of January to fulfill all market commitments. If infeasibilities are sanctioned by market exclusion, the cumulative excess profit thus remains flat after this incident. If infeasibilities lead to financial penalties, on the other hand, the cumulative excess profit drops sharply below zero near the incident but recovers quickly and then continues to grow steadily. As only a few other mild infeasibilities occur in 2019, the end-of-year excess profit of this aggressive bidding strategy still piles up to 296\EUR, which is more than twice the excess profit in the nominal scenario.
We conclude that the current level of financial penalties is too low to deter vehicle owners from making promises they cannot honor. Finally, with a unidirectional charger, the 2019 value of vehicle-to-grid falls to $15$\EUR{}, which is only~$11$\% of the respective value with a bidirectional charger. This is not too surprising given that the unidirectional vehicle can only offer as much frequency regulation as the power it buys from the utility, which is all the up-regulation it could ever provide.

In terms of battery usage, the nominal vehicle underwent 66.5 charging cycles for driving and an additional 18.9 charging cycles for frequency regulation in 2019, which suggests increased battery degradation. However, when delivering frequency regulation the standard deviation of the state-of-charge from its midpoint of 50\% reduced from 9.5 percentage points to 7.9 percentage points, which is generally thought to benefit battery longevity.

\begin{figure}[!t]
	\centering
	\includegraphics[trim=0cm 0cm 0cm 0.0cm, clip, width=\linewidth]{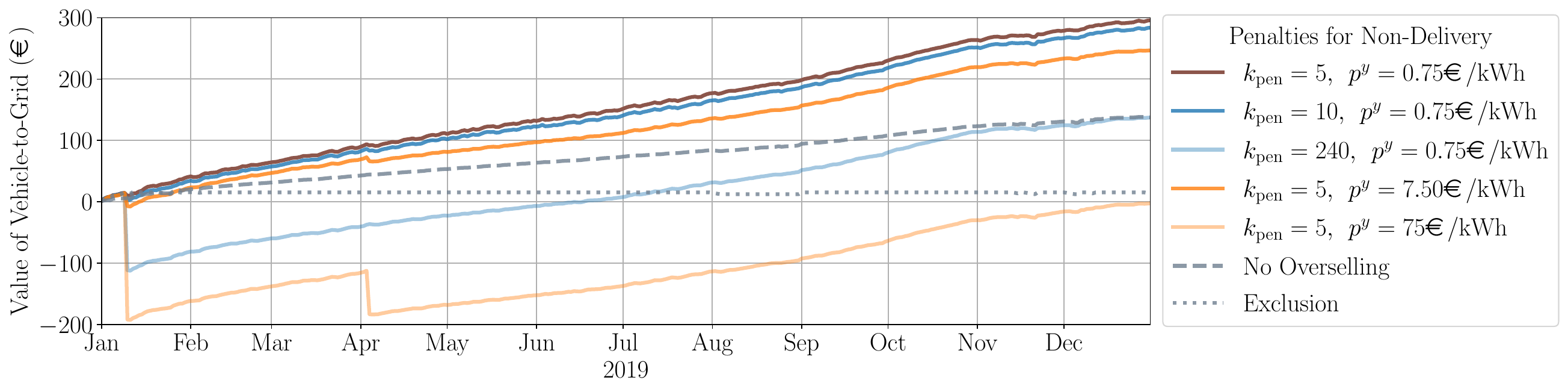}
	\caption{Value of vehicle-to-grid in 2019 versus penalties for non-delivery.}
	\label{fig:Penalties}
\end{figure}

As the bidding strategy with a weakened robustness guarantee can earn high profits when infringements of the EU delivery guarantee incur only financial penalties, we carry out an additional experiment to analyze the impact of the penalty parameters~$k_{\mathrm{pen}}$ and~$p^y$ on the value of vehicle-to-grid; see Figure~\ref{fig:Penalties}. We observe that for $p^y = 0.75$\EUR/kWh, doubling the penalty factor to~$k_{\mathrm{pen}}=10$ decreases the value of vehicle-to-grid by only~$4.1$\% to~$284$\EUR{}. An additional calculation reveals that the TSO would have to set~$k_{\mathrm{pen}}\approx 240$ in order push the value of vehicle-to-grid below that attained in the basline scenario, which fulfills the EU delivery guarantee. On the other hand, for~$k_{\rm pen}=5$, a tenfold increase of~$p^y$ to $7.50$\EUR/kWh decreases the value of vehicle-to-grid by~$16.6$\% to $247$\EUR{}, and an additional tenfold increase of~$p^y$ to~$75$\EUR/kWh pushes the value of vehicle-to-grid below zero. Increasing~$p^y$ from $0.75$\EUR/kWh to $7.50$\EUR/kWh also reduces the number of days on which the vehicle owner does not have enough energy to drive from~7 to~2.

\begin{figure}[!t]
	\centering
	\includegraphics[trim=0cm 0cm 0cm 0.0cm, clip, width=6in]{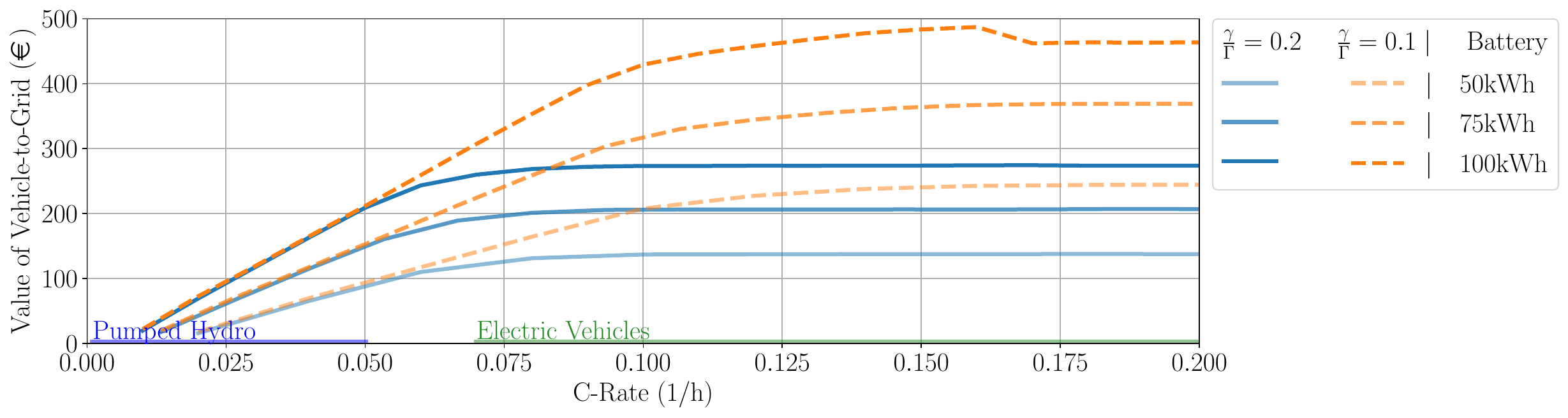}
	\caption{Value of vehicle-to-grid in 2019 as a function of C-rate and activation ratio.}
	\label{fig:c-rate}
\end{figure}

In the next experiment we investigate how the value of vehicle-to-grid depends on the activation ratio~$\gamma/\Gamma$ and the battery's size and charge rate (C-rate). The C-rate is defined as the ratio of the charger power and the battery size, and thus it expresses the percentage of the battery's total capacity that can be charged within one hour. Figure~\ref{fig:c-rate} shows that the value of vehicle-to-grid increases with the C-rate up to a saturation point that is insensitive to the battery size but decreases with the activation ratio. In the saturation regime, the value of vehicle-to-grid increases with the battery size. Typical electric vehicles can be fully charged overnight, within about 8~hours. The corresponding C-rate of~$0.125\mathrm{h}^{-1}$ falls within the saturation regime for both investigated activation ratios of~$0.1$ and~$0.2$. This observation has two implications. First, for typical electric vehicles the value of vehicle-to-grid cannot be increased by increasing the charger power (and thereby increasing the C-rate). This insight contradicts previous studies by \cite{WK05a}, \cite{PC15} and \cite{OB19}, which advocate for electric vehicles with C-rates of 1, 0.45, and 0.37, respectively. The reason for this discrepancy is that none of the previous studies faithfully account for the EU delivery guarantee. In particular, \cite{OB19} allows the vehicle owner to anticipate future frequency deviations, and \cite{PC15} assume that the bidding strategy can be updated on an hourly basis, thereby exploiting information that is not available at the the time when the market bids are collected by the TSO (\ie, one day in advance). By underestimating the amount of energy that the vehicle owner must be able to exchange with the TSO to satisfy all future obligations on the reserve market, these studies overestimate the amount of regulation power that can be sold, which makes potent battery chargers appear more useful than they actually are. The second implication is that the activation ratio has a critical impact on the value of vehicle-to-grid. The incumbent storage providers of the European electricity grid, namely  pumped-hydro storage power plants, have C-rates of about~$0.0125\mathrm{h}^{-1}$ \citep{EC20}, which are significantly smaller than the C-rates of electric vehicles. At such C-rates, the value of providing frequency regulation is the same for activation ratios of~0.1 and~0.2. To minimize competition from vehicle-to-grid, pumped-hydro storage operators may thus have an incentive to lobby for high activation ratios.

From the perspective of a TSO, the higher the activation ratio, the larger the uncertainty set~$\D$ and the lower the probability of blackouts. However, Figure~\ref{fig:Daily_StdDev} suggests that an activation period of $30$~minutes is already conservative. On the other hand, the larger~$\D$, the harder it is for storage operators to provide regulation power, which may lead to less competition, higher market prices, and a higher total cost of frequency regulation. As the system operator is a public entity, this cost is ultimately borne by the public, and the choice of the activation ratio is a political decision.

\begin{figure}[!t]
	\centering
	\includegraphics[trim=0cm 0cm 0cm 0.0cm, clip, width=6in]{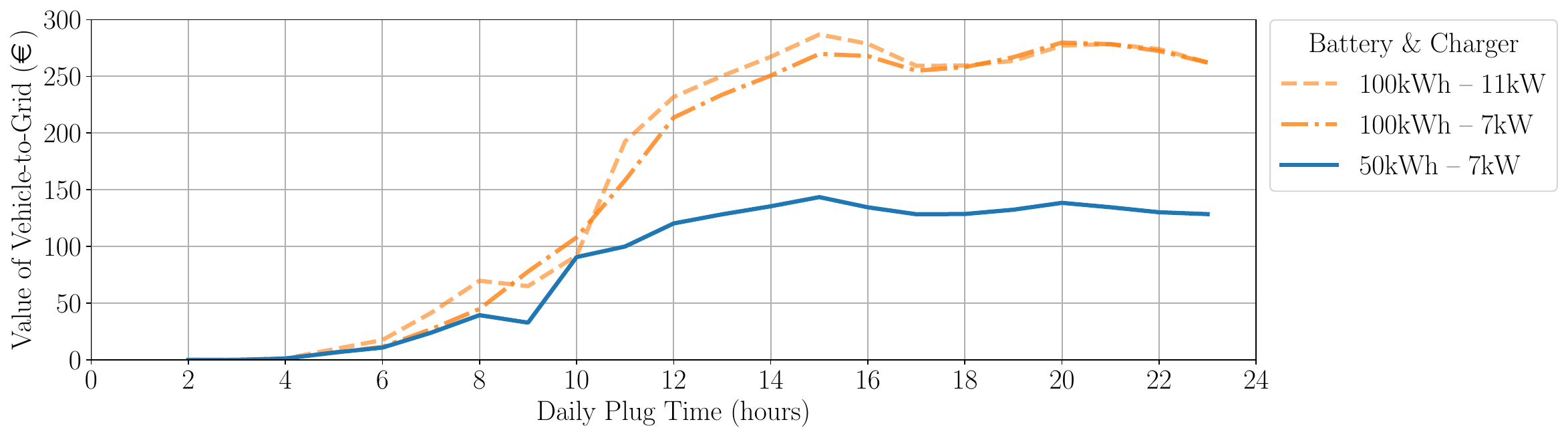}
	\caption{Value of vehicle-to-grid in 2019 versus daily plug time.}
	\label{fig:plug_time}
\end{figure}

Figure~\ref{fig:plug_time} shows the value of vehicle-to-grid in 2019 against the daily plug time, that is, the total amount of time during which the vehicle is connected to a bidirectional charger. In this experiment, we assume that a daily plug time of~$t_p \in [0\text{h}, 24\text{h}]$ means that cars are plugged from midnight to~$\frac{t_p}{2}$~am and from~$12\text{h} - \frac{t_p}{2}$~pm to midnight the next day. Whenever the car is not plugged, it consumes a constant amount of power such that the total consumption over the year corresponds to a mileage of 10,000~km as in the baseline experiment. We observe that the value of vehicle-to-grid increases with the daily plug time and saturates after 15~hours at a level that scales with the battery size. Thus, a daily plug time of 15~hours suffices for offering the maximal possible amount of regulation power. Additional experiments show that for an activation ratio of 0.1 instead of 0.2, the saturation point increases to 20~hours.

\section{Conclusions}
\label{sec:conclusions}

We develop an optimization model for the decision problem of an electricity storage operator offering frequency regulation. To our best knowledge, this is the first model that faithfully accounts for the delivery guarantees required by the European Commission. In contrast, all existing models relax the true delivery guarantee constraints and therefore risk that the electricity storage is empty (full) when a request for up- (down-)regulation arrives. In its original formulation, our model represents a non-convex robust optimization problem with functional uncertain parameters and thus appears to be severely intractable. Indeed, the state-of-the-art methods for solving the deterministic version of this problem for a single frequency deviation scenario in discrete time rely on techniques from mixed-integer linear programming. Maybe surprisingly, however, our robust optimization problem is equivalent to a tractable linear program. This is an exact result and does not rely on any approximations. To our best knowledge, we have thus discovered the first practically interesting class of optimization problems that become significantly easier through robustification. 

As our optimization model faithfully captures effective legislation, it enables us to quantify the true value of vehicle-to-grid. This capability is relevant for understanding the economic incentives of different stakeholders such as vehicle owners, aggregators, equipment manufacturers, and regulators.

As for the vehicle owners, we find that their profits from frequency regulation range from~100\EUR\ to~500\EUR\ per year under typical driving patterns. It seems unlikely that such profits are sufficient for vehicle owners to forego the freedom of using their car whenever they please to. Nevertheless, some vehicle owners may choose to participate in vehicle-to-grid for idealistic reasons such as advancing the energy transition away from fossil fuels. Our numerical results also reveal that the value of vehicle-to-grid saturates at daily plug times above 15~hours. Thus, maximal profits from frequency regulation can be reaped even if the vehicle is disconnected from the grid up to 9~hours per day. This means that vehicle owners participating in vehicle-to-grid still enjoy considerable flexibility as to when to drive, which could help to promote the adoption of vehicle-to-grid.

Our results also have ramifications for aggregators, which pool multiple vehicles to offer regulation power. Indeed, aggregators may allow vehicle owners to reserve their vehicles for up to 9~hours of driving per day without sacrificing profit. This leaves vehicle owners ample freedom and reduces the probability that they exceed their driving slots. Thus, the actual number of vehicles available for frequency regulation at any point in time closely matches its prediction. This allows aggregators to place reserve market bids with small safety margins, which ultimately lowers transaction costs. The online supplement extends our one-vehicle problem formulation to a multi-vehicle formulation.

Equipment manufacturers design and sell bidirectional vehicle chargers. Contrary to previous studies that relax the exact delivery guarantee constraints, we find that the battery size and not the charger power is limiting the profits from frequency regulation. Manufacturers thus have no incentive to produce overly powerful bidirectional chargers.

The electricity system and the society as a whole could benefit significantly from vehicle-to-grid, which harnesses the idle storage capacities of electric vehicles and thereby reduces the need for other sources of flexible electricity supply, such as gas power plants or stationary batteries. This in turn reduces the need for imports of natural gas and critical raw materials, increases the long-term security of electricity supply, and decreases greenhouse gas emissions. Regulators may therefore want to make vehicle-to-grid more attractive by prescribing the availability and delivery prices, defining appropriate delivery guarantee requirements and setting the penalties charged for non-compliance. Our results show that the vehicle owners' profits from frequency regulation decrease with the activation ratio and that current penalties are too low to incentivize vehicle owners to respect the law. Regulators could thus make vehicle-to-grid more attractive by decreasing the activation ratio and thereby relaxing the delivery guarantee requirements. Given that the delivery guarantee in our nominal scenario is very restrictive, this would only slightly decrease the reliability of frequency regulation. If, at the same time, regulators were to increase the penalties for non-compliance, then the reliability of frequency regulation might even increase because more vehicle owners would honor their contractual obligations. Our new model can be used for finding the appropriate level of the penalty. 

\paragraph{Acknowledgments} DL acknowledges fruitful discussions with Nadège Faul, Fran\c{c}ois Colet, Wilco Burghout, Paul Codani, Olivier Borne, Emilia Suomalainen, Ja\^{a}far Berrada, Willett Kempton, Evangelos Vrettos, Sophie Hiriart, and Pierre Guillain, and funding from the Institut VEDECOM. 

\linespread{1}
\small

\bibliographystyle{abbrvnat}
\bibliography{bibliography}

\newpage

\linespread{1.5}
\normalsize

\appendix

\section{Problem Data and Model Parameters}\label{Apx:Data}

\begin{table}[h!]
	\centering
	\caption{Parameters of the Nominal Simulation Scenario.}
	\label{tab:Params}
	\resizebox{0.54\textwidth}{!}{%
	\begin{tabular}{llrc}
		Parameter & Symbol & Value & Unit \\
		\midrule
		\multicolumn{4}{c}{Vehicle Data}\\
		Minimum State-of-Charge & $\ubar{y}$& 10 & kWh \\
		Maximum State-of-Charge & $\bar{y}$ & 40 & kWh \\
		Target State-of-Charge & $y^\star$ & 27 & kWh \\
		Deviation Penalty & $p^\star$ & 0.15 & \EUR/kWh \\
		Charging Efficiency  & $\eta^+$  & 85 & \% \\
		Discharging Efficiency & $\eta^-$ & 85 & \% \\
		Maximum Charing Power & $\bar{y}^+$ & 7 & kW \\
		Maximum Discharging Power & $\bar{y}^-$ & 7 & kW \\
		Yearly Energy for Driving & & 2,000 & kWh \\
		Fraction of Time Driving & & 27 & \% \\
		\midrule
		\multicolumn{4}{c}{Grid Data}\\
		Nominal frequency  & $f_0$ & 50 & Hz \\
		Normalization constant & $\Delta f$ & 200 & mHz\\
		Average Utility Price from 2015 to 2019 & & 14.31 & cts/kWh \\
		Average Availability Price from 2015 to 2019 & & 0.825 & cts/kW/h \\
		\midrule
		\multicolumn{4}{c}{General Parameters}\\
		Trading Interval & $\Delta t$ & 30 & min \\
		Activation Period in $\D$ & $\gamma$ & 30 & min \\
		Regulation Cycle in $\D$ & $\Gamma$ & 2.5 & h \\
		Activation Period in $\hat \D$ & $\hat \gamma$ & 30 & min \\
		Regulation Cycle in $\hat \D$ & $\hat \Gamma$ & 1 & day \\
		Sampling Rate for Frequency Measurements & & 0.1 & Hz \\
		Planning Horizon & $T$ & 1 & day \\
		\bottomrule
	\end{tabular}}
\end{table}

\begin{figure}[!b]
	\centering
	\includegraphics[trim=0cm 0cm 0cm 0.0cm, clip, width=6.5in]{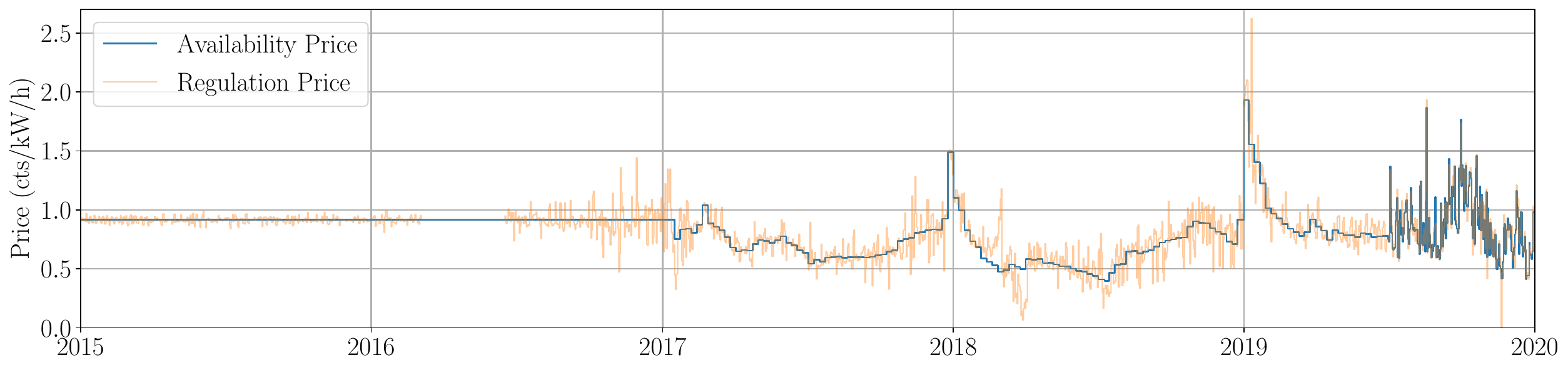}
	\caption{Evolution of the availability and regulation prices from 2015 to 2019. The regulation price changes every~$10$s. For better visibility, we show its daily averages.}
	\label{fig:Reserve price}
\end{figure}

\section{Vehicle Aggregator}\label{sec:aggregator}
  
So far we considered a single vehicle in isolation. In reality, an aggregator would pool hundreds to thousands of electric vehicles to participate in frequency regulation. Aggregators could use our results from the one-vehicle setting as follows. On each day, they ask drivers to schedule their driving needs for the next day, or they infer these needs from past travel patterns. Next, they solve our linear program for each car individually, which can be done efficiently by parallel computing even for large fleets with thousands of vehicles. Last, they add up the regulation power each car can provide and sell it to the grid operator. This approach is easy to implement and requires few computational resources at the expense of sacrificing some optimality. However, it is instructive to formulate a problem for multiple vehicles as it allows us to distinguish between constraints that only apply on the aggregator level and between constraints that apply on the individual vehicle level.

Below, we formulate a planning problem that allows for asymmetric bids of up- and down-regulation on the vehicle level but enforces symmetric bids on an aggregate level. To do so, we replace the decision variable for regulation power~$x^r(t)$ at time $t \in \T$ by a decision variable for up-regulation $x^\uparrow_v(t)$ and down-regulation $x^\downarrow_v(t)$ for each vehicle $v$ in the vehicle portfolio~$\set{V}$. The instantaneous power consumption of a vehicle $v\in\mathcal V$ from the grid is then no longer given by $x_v^b(t) + \delta(t) x_v^r$; instead, it becomes $x_v^b(t) + [\delta(t)]^+ x^\downarrow_v(t) - [\delta(t)]^- x^\uparrow_v(t)$. Consequently, the charging and discharging rates in equations (1a-b) are redefined as
\begin{equation*}
    y^+\left( x^b_v(t), x^{\uparrow}_v(t), x^{\downarrow}_v(t), \delta(t) \right) = \left[ x^b_v(t) + \left[ \delta(t) \right]^+ x^{\downarrow}_v (t) - \left[ \delta(t) \right]^- x^{\uparrow}_v(t) \right]^+
\end{equation*}
and
\begin{equation*}
    y^-\left( x^b_v(t), x^{\uparrow}_v(t), x^{\downarrow}_v(t), \delta(t) \right) = \left[ x^b_v(t) + \left[ \delta(t) \right]^+ x^{\downarrow}_v (t) - \left[ \delta(t) \right]^- x^{\uparrow}_v(t) \right]^-.
\end{equation*}
The state-of-charge function of vehicle~$v$ then becomes
\begin{align*}
    & y_v(x^b_v, x^{\uparrow}_v, x^{\downarrow}_v, \delta, y_{0,v}, t ) \\
    = & y_{0,v} + \int_0^t \eta^+_v y^+\left( x^b_v(t'), x^{\uparrow}_v(t'), x^{\downarrow}_v(t'), \delta(t') \right)- \frac{y^-\left( x^b_v(t'), x^{\uparrow}_v(t'), x^{\downarrow}_v(t'), \delta(t') \right)}{\eta^-_v} - d_v(t') \, \mathrm{d}t',
\end{align*}
where $y_{0,v}$ refers to the initial state-of-charge of vehicle~$v$. The centralized planning problem of an aggregator pooling multiple vehicles with asymmetric bidding behavior can thus be represented as
\begin{equation}
\arraycolsep=2pt
\tag{A}
\label{pb:C}
\begin{array}{>{\displaystyle}c*3{>{\displaystyle}l}}
\min_{x^b, x^{\uparrow}, x^{\downarrow}} & \multicolumn{3}{>{\displaystyle}l}{ c\Big(\sum_{v \in \set{V}} x^b_v, \sum_{v \in \set{V}} x^{\uparrow}_v \Big) + \sum_{v \in \V} \max_{\delta \in \set{\hat{D}}, \, y_{v,0} \in \set{\hat{Y}}_{v,0}} \varphi_v\big( y(x^b_v, x^{\uparrow}_v, x^{\downarrow}_v, \delta, y_{v,0}, T)\big)} \\
\subj & y^+( x^b_v(t), x^{\uparrow}_v(t), x^{\downarrow}_v(t), \delta(t) ) &\leq \bar{y}^+_v(t) & \forall \delta \in \D,~ \forall t \in \T,~ \forall v \in \set{V} \\
& y^-( x^b_v(t), x^{\uparrow}_v(t), x^{\downarrow}_v(t), \delta(t) ) &\leq \bar{y}^-_v(t) & \forall \delta \in \D,~ \forall t \in \T,~ \forall v \in \set{V} \\
& y_v(x^b_v, x^{\uparrow}_v, x^{\downarrow}_v, \delta, y_{v,0}, t) &\leq \bar{y}_v & \forall \delta \in \D,~\forall t \in \T,~ \forall v \in \set{V},~ \forall y_{v,0} \in \set{Y}_{v,0} \\
& y_v(x^b_v, x^{\uparrow}_v, x^{\downarrow}_v, \delta, y_{v,0}, t) &\geq \ubar{y}_v & \forall \delta \in \D,~ \forall t \in \T,~ \forall v \in \set{V},~ \forall y_{v,0} \in \set{Y}_{v,0} \\
& x^b_v,x^\uparrow_v,x^\downarrow_v \in \set{X} && \forall v \in \set{V} \\
& \sum_{v \in \set{V} x^{\uparrow}_v(t) - x^{\downarrow}_v(t)} & = 0 &\forall t \in \set{T},
\end{array}
\end{equation}
where the equality constraint ensures that the aggregator bids the same amount of up- and down-regulation to the grid operator. Much like the original problem~\eqref{pb:Rc} for a single vehicle, the aggregator's problem~\eqref{pb:C} also admits a tractable linear programming reformulation.

\begin{Th}[Aggregator]\label{th:c}
    Problem~\eqref{pb:C} admits an exact linear programming reformulation.
\end{Th}

The linear program equivalent to~\eqref{pb:C} scales linearly with the number of vehicles. The only constraint coupling the individual vehicles is the equality constraint in problem~\eqref{pb:C}. For a limited number of vehicles, the aggregator will be able to solve the reformulation directly. However, the linear program may become too large to fit in memory or be solved in a reasonable amount of time for large vehicle fleets. In this case, the optimization effort can be distributed across vehicles via decomposition methods such as the alternating direction method of multipliers~\citep{SB11}. Since the coupling constraint only involves the bids for up- and down-regulation, each vehicle would iteratively solve a version of problem~\eqref{pb:C} \emph{without} the coupling constraint and communicate the resulting bids for up- and down-regulation. The aggregator would compute the mismatch between the total bids for up- and down-regulation from all vehicles and broadcast a mismatch signal to all vehicles, which then recompute their market bids. This distributed approach has the added advantage that it does not communicate potentially sensitive private information such as estimated future driving patterns to the aggregator.

Numerically, when pairing two nominal vehicles from Section~\ref{sec:restuls-discussion}, one with a bidirectional charger and one with a unidirectional charger, in problem~\eqref{pb:C}, the value of vehicle-to-grid raises by~38\% from 152\EUR to 210\EUR for both vehicles together, which shows the benefit of imposing the balance between up- and down-regulation only at the aggregate level. To demonstrate the scalability of our formulation, we now solve problem~\eqref{pb:C} for a fleet of 9 different vehicle types with 100 vehicles of each type. Two vehicle types correspond the nominal vehicle with and without a bidirectional charger. The other 7 vehicle types are all variants of the nominal vehicle with the same charger power, a battery size of either $50$kWh or $75$kWh, and vehicle usage during either workdays, weekends, or exclusively from June through August for seasonal vehicles, such as summer rental cars. In total, $4$ out of the $9$ vehicles types have unidirectional chargers. We find that the fleet of $900$~vehicles offers $1.40$MW of regulation power on average throughout the year $2019$, with a resulting value of vehicle-to-grid of $98,473$\EUR. The profit per MW of frequency regulation is thus about $70,337$\EUR.

Beyond asymmetric regulation bids, aggregators can decide which cars (or other flexible electric devices) to use to provide a given amount of regulation power. This has two advantages. First, battery degradation can be reduced by managing the state-of-charge of vehicles more precisely. Second, charging and discharging losses can be reduced by running vehicle chargers at their nominal operating points. In addition, aggregators may be able to trade on intraday markets, which would allow them to offer more regulation power for a given aggregate battery size. For example, if they encounter an extreme deviation trajectory, they can buy or sell electricity on the intraday market to maintain the state-of-charge within its bounds. This is risky, however, because intraday markets may not be liquid enough for aggregators to find trading partners. Especially not when the grid is already in distress, which is the case when extreme frequency deviations occur.

\section{Proofs}\label{Apx:Proofs}

% -------------------
% Problem description
% -------------------

This online supplement contains the proofs of all theorems in the main text as well as some auxiliary propositions that are needed for these proofs.

\begin{Prop}\label{Prop:y}
	Holding all other factors fixed, the battery state-of-charge $y(x^b,x^r,\delta,y_0,t)$ is concave nondecreasing in $x^b$, concave in $x^r$, concave nondecreasing in $\delta$, and affine nondecreasing in $y_0$.
\end{Prop}

\begin{proof}
	By definition we have
	\begin{align*}
		y\left(x^b,x^r,\delta,y_0,t\right) = & y_0 + \int_{0}^{t} \eta^+ \left[x^b(t') + \delta(t')x^r(t')\right]^+ - \frac{\left[x^b(t') + \delta(t')x^r(t')\right]^-}{\eta^-} -d(t') \, \mathrm{d} t' \\
		= & y_0 + \int_0^t \min\left\{ \eta^+\left( x^b(t') + \delta(t') x^r(t') \right), \frac{x^b(t') + \delta(t')x^r(t')}{\eta^-} \right\} - d(t') \, \mathrm{d}t',
	\end{align*}
	where the second equality holds because $\eta^+ < \frac{1}{\eta^-}$. The postulated properties of $y(x^b,x^r,\delta,y_0,t)$ follow from the observation that the minimum of two (nondecreasing) affine functions is a concave (nondecreasing) function~\cite[p.~73]{SB04}.
\end{proof}

\begin{Prop} If $\delta \in \D$, then $\mathrm{Var}(\delta) \leq\ceil{T/\Gamma} \gamma/T$.
\end{Prop}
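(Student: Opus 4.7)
The plan is to combine two simple inequalities: first reduce the squared integrand to the absolute value using $|\delta(t)| \le 1$, and then exploit the budget constraint of $\mathcal{D}$ on a carefully chosen partition of $[0,T]$.

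First I would note that since $\delta(t) \in [-1,1]$, we have $\delta(t)^2 \le |\delta(t)|$ pointwise on $\mathcal{T}$, so
\begin{equation*}
\mathrm{Var}(\delta) \;=\; \frac{1}{T}\int_0^T \delta(t)^2 \, \mathrm{d}t \;\le\; \frac{1}{T}\int_0^T |\delta(t)| \, \mathrm{d}t.
\end{equation*}
It then suffices to bound $\int_0^T |\delta(t)|\,\mathrm{d}t$ by $\lceil T/\Gamma\rceil \gamma$.

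Next I would set $n = \lceil T/\Gamma\rceil$ and partition $[0,T]$ into the $n$ consecutive intervals $I_k = [(k-1)\Gamma, k\Gamma]$ for $k = 1,\ldots,n-1$ together with the possibly truncated last interval $I_n = [(n-1)\Gamma, T]$. For $k \le n-1$, I would instantiate the defining constraint of $\mathcal{D}$ at $t = k\Gamma \ge \Gamma$, so that $[t-\Gamma]^+ = (k-1)\Gamma$ and the constraint reads $\int_{I_k} |\delta(t')|\,\mathrm{d}t' \le \gamma$. For the last piece, I would instantiate the constraint at $t = T$, which gives $\int_{[T-\Gamma]^+}^{T} |\delta(t')|\,\mathrm{d}t' \le \gamma$. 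The minor technical point to verify is that $I_n \subseteq [[T-\Gamma]^+, T]$: this follows from $(n-1)\Gamma \ge T - \Gamma$, a direct consequence of $n \ge T/\Gamma$, and nonnegativity of $|\delta|$ then yields $\int_{I_n} |\delta(t')|\,\mathrm{d}t' \le \gamma$ as well.

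Summing the $n$ bounds gives $\int_0^T |\delta(t')|\,\mathrm{d}t' \le n\gamma = \lceil T/\Gamma\rceil \gamma$, and dividing by $T$ concludes the proof. The only subtle step is handling the last sub-interval when $T$ is not an integer multiple of $\Gamma$; the argument above sidesteps the issue of overlapping windows by applying the budget constraint at the endpoint $t = T$ and then shrinking the integration domain to $I_n$ via monotonicity.
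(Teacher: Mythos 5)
Your proof is correct, and its second half takes a genuinely more elementary route than the paper's. The first half is essentially identical: you partition $[0,T]$ into $\ceil{T/\Gamma}$ windows of length at most $\Gamma$ and apply the defining constraint of $\D$ to each (instantiated at the right endpoints $k\Gamma$ and at $T$, with the monotonicity argument for the truncated last window), which is exactly how the paper establishes $\D^+ \subseteq \D_\beta$ with $\beta = \ceil{T/\Gamma}\gamma$. Where you diverge is in passing from the integral budget to the variance bound: the paper sets up the maximization of $\mathrm{Var}(\delta)$ over the budget uncertainty set $\D_\beta$ as an infinite-dimensional optimization problem and bounds it by $\beta/T$ via weak Lagrangian duality, whereas you simply observe that $\delta(t)^2 \le \vert\delta(t)\vert$ pointwise because $\vert\delta(t)\vert \le 1$, so that $\mathrm{Var}(\delta) \le \frac{1}{T}\int_0^T \vert\delta(t)\vert\,\mathrm{d}t \le \beta/T$ directly. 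The two routes give the same constant (the worst case over $\D_\beta$ is attained by a binary scenario, for which $\delta^2 = \vert\delta\vert$, so nothing is lost by your relaxation), but your argument avoids the duality machinery entirely and is shorter; the paper's derivation has the side benefit of exhibiting $\D_\beta$ explicitly as a budget uncertainty set in the sense of Bertsimas and Sim and of showing that the bound $\beta/T$ is actually tight for $\D_\beta$, which your one-line inequality does not by itself establish.
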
\label{Prop:Var}

\begin{proof}
	We will show that
	$ \max_{\delta \in \D} \mathrm{Var}(\delta) \leq \beta/T$, where $\beta = \ceil{T/\Gamma} \gamma$.
	To this end, we note that
	\begin{equation}
		\label{eq:Var}
		\max_{\delta \in \D} \mathrm{Var} (\delta) = \max_{\delta \in \D^+} \mathrm{Var} (\delta) \leq \max_{\delta \in \D_\beta} \mathrm{Var}(\delta),
	\end{equation}
	where $\D_\beta = \{ \delta \in \mathcal{L}(\mathcal{T}, [0,1]) : \int_{0}^{T} \delta(t) \, \mathrm{d}t \leq \beta \}$. The equality in~\eqref{eq:Var} holds because $\text{Var}(\delta)$ remains unchanged when~$\delta(t)$ is replaced with~$\vert \delta(t) \vert$ for every~$t \in \T$. Moreover, the inequality holds because $\D^+ \subseteq \D_\beta$. To see this, observe that for any $\delta \in \D^+$ we have
	\begin{equation*}
		\int_0^T \delta(t) \, \mathrm{d}t = 
		\sum_{n=1}^{\lceil T/\Gamma \rceil } \int_{(n-1)\Gamma}^{\min\{n\Gamma, T\}} \delta(t) \, \mathrm{d} t
		\leq \ceil*{\frac{T}{\Gamma}} \gamma = \beta.
	\end{equation*}
	Note that $\D_\beta$ constitutes a budget uncertainty set of the type introduced by~\cite{DB04}, where the uncertainty budget $\beta$ corresponds to the maximum number of regulation cycles within the planning horizon multiplied by the duration of an activation period. Thus,~$\beta$ can be viewed as the maximum amount of time within the planning horizon during which all reserve commitments must be honored. By weak duality, the highest variance of any scenario in $\D_\beta$ satisfies
	\begingroup
	\allowdisplaybreaks
	\begin{align*}
		\max_{\delta \in \D_\beta} \mathrm{Var}(\delta)
		\leq & \min_{\lambda \geq 0} \max_{\delta \in \set{L}(\T,[0,1])} \frac{1}{T} \int_{0}^{T}\delta^2(t) \, \mathrm{d}t + \lambda \left( \beta - \int_{0}^{T} \delta(t) \, \mathrm{d}t\right) \\
		= & \min_{\lambda \geq 0} \lambda \beta + \max_{\delta \in \set{L}(\T,[0,1])} \int_{0}^T \delta(t) \left( \delta(t)/T - \lambda \right) \mathrm{d}t \\
		= & \min_{\lambda \geq 0} \lambda \beta + T \max_{\delta \in [0,1]} \delta \left( \delta/T - \lambda \right) \\
		= & \min_{\lambda \geq 0} \lambda \beta + \max\left\{ 0,1-\lambda T \right\} \\
		= & \min_{\lambda \geq 0} \max\left\{ \lambda \beta, 1 - \lambda \left( T - \beta \right) \right\} = \frac{\beta}{T}.
	\end{align*}
	\endgroup
	The claim now follows by substituting the above result into~\eqref{eq:Var} and recalling the definition of~$\beta$.
\end{proof}

% -------------------
% Time Discretization
% -------------------

\begin{Prop}\label{Prop:D}
	The following statements hold.
	\begin{enumerate}
		\item[(i)] $L\D_{\K} \subseteq \D$ and $L^\dagger \D = \D_{\K}$.
		\item[(ii)] $L\D_{\K}^+ \subseteq \D^+$ and $L^\dagger \D^+ = \D^+_\K$.
	\end{enumerate}
\end{Prop}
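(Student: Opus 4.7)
My plan is to prove part~(i) directly and derive part~(ii) as an immediate corollary, relying on the fact that both $L$ and $L^\dagger$ preserve nonnegativity. Before anything else I would record the identity $L^\dagger L = \mathrm{Id}_{\R^K}$, which follows from $(L^\dagger L\bm{v})_k = \frac{1}{\Delta t}\int_{\T_k} v_k\,\mathrm{d}t = v_k$. Together with the inclusion $L\D_\K\subseteq\D$, this identity gives the nontrivial direction $\D_\K \subseteq L^\dagger\D$ essentially for free, since every $\bm{\delta}\in\D_\K$ can be written as $L^\dagger(L\bm{\delta})\in L^\dagger\D$.

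The main substantive step is to show $L\D_\K\subseteq\D$. Fix $\bm{\delta}\in\D_\K$; then $L\bm{\delta}$ clearly takes values in $[-1,1]$, and the task reduces to bounding $I(t) := \int_{[t-\Gamma]^+}^t |(L\bm{\delta})(t')|\,\mathrm{d}t'$ by $\gamma$ for every $t\in\T$. Write $m = \Gamma/\Delta t$, which is an integer by Assumption~\ref{Ass:div}, and decompose $t = (k-1)\Delta t + s$ with $k\in\K$ and $s\in[0,\Delta t]$. Because $\Gamma$ is grid-aligned, the window $[t-\Gamma,t]$ overlaps $\T_{k-m},\T_{k-m+1},\ldots,\T_k$ by lengths $\Delta t-s,\Delta t,\ldots,\Delta t,s$ (with truncation on the left if $t\leq\Gamma$). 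Summing the contributions yields $I(t) = \Delta t\sum_{l=k-m}^{k-1}|\delta_l| + s\bigl(|\delta_k|-|\delta_{k-m}|\bigr)$, which is affine in $s\in[0,\Delta t]$ and hence bounded above by the larger of its two endpoint values $\Delta t\sum_{l=k-m}^{k-1}|\delta_l|$ and $\Delta t\sum_{l=k-m+1}^{k}|\delta_l|$. Both of these sums are at most $\gamma$ by the budget inequalities of $\D_\K$ evaluated at indices $k-1$ and $k$, respectively. The truncated regime $t\leq\Gamma$ is even easier: one gets $I(t)\leq\Delta t\sum_{l=1}^{k}|\delta_l|\leq\gamma$ directly from the constraint of $\D_\K$ at index $k$.

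For the inclusion $L^\dagger\D\subseteq\D_\K$, pick $\delta\in\D$ and set $\bm{\delta}=L^\dagger\delta$. Each entry $\delta_k$ is an average of values in $[-1,1]$, so $\delta_k\in[-1,1]$. For the budget constraint, the triangle inequality gives $|\delta_l|\leq\frac{1}{\Delta t}\int_{\T_l}|\delta(t')|\,\mathrm{d}t'$, and summing over $l = 1+[k-m]^+,\ldots,k$ telescopes the right-hand side into $\frac{1}{\Delta t}\int_{[k\Delta t-\Gamma]^+}^{k\Delta t}|\delta(t')|\,\mathrm{d}t'$, which is at most $\gamma/\Delta t$ because $\delta\in\D$. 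This completes part~(i); part~(ii) then follows by observing that $L$ sends nonnegative vectors to nonnegative functions and $L^\dagger$ sends nonnegative functions to nonnegative vectors, so intersecting both sides of the inclusions in part~(i) with the respective nonnegative cones preserves them.

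The only real obstacle is the affine-in-$s$ bookkeeping for $I(t)$. One must separately treat the truncated regime $t\leq\Gamma$ and the full regime $t>\Gamma$, and in the latter case verify that $I(t)$ is affine in the sub-grid offset $s$ so that its supremum over $s\in[0,\Delta t]$ is attained at a grid point, where it reduces precisely to one of the discrete sums defining $\D_\K$. Assumption~\ref{Ass:div} does all the work here: without divisibility of $\Gamma$ by $\Delta t$, the overlap pattern would involve fractional intervals on both ends and the clean reduction to two endpoint sums would fail.
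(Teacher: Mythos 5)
Your proposal is correct and follows essentially the same route as the paper's proof: the same three inclusions, the same use of $L^\dagger L = \mathrm{Id}_{\R^K}$ to get $\D_\K \subseteq L^\dagger\D$ for free, and the same reduction of the sliding-window integral to an interpolation between the two discrete budget sums at indices $k-1$ and $k$ (the paper writes this as a convex combination in $\alpha = \lceil t/\Delta t\rceil - t/\Delta t$, which is identical to your affine-in-$s$ endpoint argument). Your explicit handling of part~(ii) via nonnegativity preservation and your explicit invocation of the triangle inequality in the $L^\dagger\D\subseteq\D_\K$ step are, if anything, slightly more careful than the paper's.
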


\begin{proof}
	As for assertion~\textit{(i)} we first prove that $L \D_\K \subseteq \D$. To this end, select any $\bm{\delta} \in \D_\K$, and define $\delta = L \bm{\delta}$. It is easy to see that $\delta \in \set{L}(\T, [-1,1])$. Next, select any $t \in \T$. If $t \leq \Gamma$, note that
	\begin{equation*}
		\int_{\left[t-\Gamma\right]^+}^t \vert \delta(t') \vert \, \mathrm{d}t'
		= \int_0^t \vert \delta(t') \vert \, \mathrm{d}t'
		\leq \int_0^{\Gamma} \vert \delta(t') \vert \, \mathrm{d}t' = \Delta t \sum_{l=1}^{\Delta k} \vert \delta_l \vert \leq \gamma,
	\end{equation*}
	where the auxiliary parameter $\Delta k = \Gamma/\Delta t$ is integral thanks to Assumption~\ref{Ass:div}. The second inequality in the above expression holds because $\bm{\delta} \in \D_\K$. If $t \geq \Gamma$, on the other hand, we define $k = \lceil \frac{t}{\Delta t} \rceil$ and $\alpha = \lceil \frac{t}{\Delta t} \rceil - \frac{t}{\Delta t} \in [0,1)$. Then, we find
	\begin{align*}
		\int_{[t-\Gamma]^+}^t \vert \delta(t') \vert \, \mathrm{d}t'
		=& \int_{t-\Gamma}^{ (k-\Delta k) \Delta t} \vert \delta(t') \vert \, \mathrm{d}t' + \int_{(k-\Delta k) \Delta t}^{ (k-1) \Delta t} \vert \delta(t') \vert \, \mathrm{d}t' + \int_{(k-1) \Delta t}^{ t} \vert \delta(t') \vert \, \mathrm{d}t' \\
		= & \left(k \Delta t - t \right) \vert \delta_{k-\Delta k} \vert  
		+ \Delta t \sum_{l=k-\Delta k +1}^{k-1} \vert \delta_l \vert
		+ \left( t - \left(k-1\right)\Delta t \right) \vert \delta_{k} \vert \\
		= & \Delta t \bigg( \alpha \sum_{l=k-\Delta k}^{k-1} \vert \delta_l \vert 
		+ \left(1 - \alpha\right) \sum_{l=k- \Delta k + 1}^{k} \vert \delta_l \vert \bigg) \leq \gamma,
	\end{align*}
	where the inequality holds because $\bm{\delta} \in \D_\K$, which ensures that both $\sum_{l=k-\Delta k}^{k-1} \vert \delta_l \vert$ and $\sum_{l=k-\Delta k + 1}^{k} \vert \delta_l \vert$ are smaller or equal to~$\gamma/\Delta t$. As $t \in \T$ was chosen arbitrarily, this implies that $\delta \in L\D_\K$. In summary, we have shown that $L \D_\K \subseteq \D$. 
	
	Next, we show that $L^\dagger \D \subseteq \D_\K$. To this end, select any $\delta \in \D$ and define $\bm{\delta} = L^\dagger \delta$. It is easy to see that $\bm{\delta} \in [-1,1]^K$. Moreover, for any $k \in \K$ we have 
	\begin{equation*}
		\sum_{l = 1 + [k - \Gamma/\Delta t]^+}^k \delta_l = \sum_{l = 1 + [k - \Gamma/\Delta t]^+}^k \frac{1}{\Delta t}\int_{\T_l} \delta(t') \, \mathrm{d}t' = \frac{1}{\Delta t} \int_{[k \Delta t - \Gamma]^+}^{k \Delta t}  \delta(t') \, \mathrm{d}t' \leq \frac{\gamma}{\Delta t},
	\end{equation*}
	where the inequality holds because $\delta \in \D$. As $k \in \K$ was chosen arbitrarily, this implies that $\bm{\delta} \in L^\dagger \D$. In summary, we have shown that $L^\dagger \D \subseteq \D_\K$.
	
	Finally, we prove that $\D_\K \subseteq L^\dagger \D$. To this end, we observe that $L^\dagger L$ coincides with the identity mapping on $\mathbb{R}^K$. As $L \D_\K \subseteq \D$, this implies that 
	\begin{equation*}
		\D_\K = L^\dagger L \D_\K \subseteq L^\dagger \D.
	\end{equation*}
	Since both $L^\dagger \D \subseteq \D_\K$ and $\D_\K \subseteq L^\dagger\D$, we have in fact shown that $L^\dagger \D = \D_\K$. Thus assertion~\textit{(i)} follows. Assertion~\textit{(ii)} can be proved in a similar manner. Details are omitted for brevity.
\end{proof}

\begin{Prop}\label{Prop:yk}
	Holding all other factors fixed, $y_k(\bm{x^b},\bm{x^r},\bm{\delta},y_0)$ is concave nondecreasing in $\bm{x^b}$, concave in $\bm{x^r}$, concave nondecreasing in $\bm{\delta}$, and linear nondecreasing in $y_0$ for any $k \in \K$.
\end{Prop}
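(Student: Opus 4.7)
The plan is to deduce this proposition as a direct corollary of Proposition~\ref{Prop:y} by exploiting the identity
\[ y_k(\bm{x^b},\bm{x^r},\bm{\delta},y_0) = y(L\bm{x^b}, L\bm{x^r}, L\bm{\delta}, y_0, k\Delta t), \]
which holds by definition of $y_k$ and of the lifting operator $L$. The key observation is that $L:\R^K \to \set{L}(\T,\R)$ is a linear map, so the preservation of concavity under precomposition with a linear map transfers every concavity statement of Proposition~\ref{Prop:y} directly to $y_k$.

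Concretely, I would first argue concavity in $\bm{x^b}$: for $\alpha \in [0,1]$ and $\bm{u},\bm{v} \in \R^K$, linearity of $L$ gives $L(\alpha \bm{u} + (1-\alpha)\bm{v}) = \alpha L\bm{u} + (1-\alpha)L\bm{v}$, and Proposition~\ref{Prop:y} then yields
\[ y_k(\alpha \bm{u} + (1-\alpha)\bm{v}, \bm{x^r}, \bm{\delta}, y_0) \geq \alpha \, y_k(\bm{u}, \bm{x^r}, \bm{\delta}, y_0) + (1-\alpha) y_k(\bm{v}, \bm{x^r}, \bm{\delta}, y_0). \]
The very same argument, applied in each of the remaining arguments, establishes concavity in $\bm{x^r}$ and in $\bm{\delta}$. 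For the dependence on $y_0$, the identity above shows that $y_0$ enters $y_k$ as an additive term, so the mapping $y_0 \mapsto y_k(\bm{x^b},\bm{x^r},\bm{\delta},y_0)$ is affine, which is both linear (in the sense of Proposition~\ref{Prop:y}) and nondecreasing.

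For the monotonicity claims, I would verify that $L$ is order-preserving: if $\bm{u} \leq \bm{v}$ componentwise, then $(L\bm{u})(t) \leq (L\bm{v})(t)$ for all $t \in \T$, because each piece of $L\bm{u}$ is dominated by the corresponding piece of $L\bm{v}$. Consequently, the statements in Proposition~\ref{Prop:y} that $y$ is nondecreasing in $x^b$ and $\delta$ (with respect to the pointwise order on $\set{L}(\T,\R)$) transfer via the identity to yield that $y_k$ is nondecreasing in $\bm{x^b}$ and $\bm{\delta}$ in the componentwise order on $\R^K$.

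Since the entire argument reduces to noting that linear maps preserve concavity and that $L$ preserves the underlying order, there is no substantive obstacle here; the only minor subtlety is to confirm that the notions of nondecreasingness used in Proposition~\ref{Prop:y} for functional arguments are the pointwise order, so that applying $L$ consistently translates componentwise monotonicity on $\R^K$ into pointwise monotonicity on $\set{L}(\T,\R)$ and back. A one- or two-line proof should suffice.
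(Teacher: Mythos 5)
Your proof is correct, but it takes a different route from the paper. The paper simply asserts that the proof ``widely parallels that of Proposition~\ref{Prop:y}'', i.e., the intended argument is to redo the computation in discrete time: write $y_k(\bm{x^b},\bm{x^r},\bm{\delta},y_0) = y_0 + \Delta t\sum_{l=1}^{k}\min\bigl\{\eta^+(x^b_l+\delta_l x^r_l),\,(x^b_l+\delta_l x^r_l)/\eta^-\bigr\} - d_l$ and observe that each summand is a minimum of two (nondecreasing) affine functions, hence concave (nondecreasing), exactly as in the continuous case. You instead deduce the statement as a genuine corollary of Proposition~\ref{Prop:y}, using the defining identity $y_k(\bm{x^b},\bm{x^r},\bm{\delta},y_0)=y(L\bm{x^b},L\bm{x^r},L\bm{\delta},y_0,k\Delta t)$ together with the facts that $L$ is linear (so precomposition preserves concavity) and order-preserving (so pointwise monotonicity on $\set{L}(\T,\R)$ pulls back to componentwise monotonicity on $\R^K$). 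Both arguments are valid and short; yours avoids repeating the algebra and makes the logical dependence on Proposition~\ref{Prop:y} explicit, while the paper's version is self-contained in the discrete setting and does not require checking the structural properties of $L$. Your closing caveat is also the right one to flag: the monotonicity in Proposition~\ref{Prop:y} is indeed with respect to the pointwise order on functions, which is exactly what $L$ preserves, so the transfer is legitimate.
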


\begin{proof}
	The proof widely parallels that of Proposition~\ref{Prop:y} and is therefore omitted.
\end{proof}

The proof of Theorem~\ref{th:time} relies on Propositions~\ref{Prop:yc}--\ref{Prop:H} below.

\begin{Prop}\label{Prop:yc} The following equivalences hold.
\begin{subequations}
\begin{align*}
	  (i) \quad & y^+\left(x^b(t),x^r(t),\delta(t)\right) \leq \bar{y}^+(t) ~ \forall \delta \in \D, \forall t \in \T
	  \iff y^+_k\left(x^b_k,x^r_k,\delta_k\right) \leq \bar{y}^+_k ~ \forall \bm{\delta} \in \D_\K, \forall k \in \K \\
	  (ii) \quad & y^-\left(x^b(t),x^r(t),\delta(t)\right) \leq \bar{y}^-(t) ~ \forall \delta \in \D, \forall t \in \T
	  \iff y^-_k\left(x^b_k,x^r_k,\delta_k\right) \leq \bar{y}^-_k ~ \forall \bm{\delta} \in \D_\K, \forall k \in \K
\end{align*} 
\end{subequations}
\end{Prop}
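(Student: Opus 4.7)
My plan is to exploit the pointwise structure of both $y^+$ and $y^-$, which depend only on the values of $x^b,x^r,\delta$ at a single time instant. Since Assumption~\ref{Ass:cst} ensures that $x^b,x^r,\bar{y}^+,\bar{y}^-$ are piecewise constant, all that really needs to be compared is the range of possible values $\delta(t)$ can attain at any fixed $t \in \T_k$ versus the range $\delta_k$ can attain in $\D_\K$. I will handle both directions of~(i); the proof of~(ii) is verbatim identical, and I would just say so.

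For the forward direction, I would appeal to Proposition~\ref{Prop:D}(i), which gives $L\D_\K \subseteq \D$. Pick any $\bm{\delta}\in\D_\K$ and set $\delta=L\bm{\delta}\in\D$. Choose any $k \in \K$ and any particular $t \in \T_k$. Piecewise constancy gives $x^b(t)=x^b_k$, $x^r(t)=x^r_k$, $\delta(t)=\delta_k$, $\bar{y}^+(t)=\bar{y}^+_k$, so the continuous constraint evaluated at this $t$ immediately delivers $y^+(x^b_k,x^r_k,\delta_k)\leq \bar{y}^+_k$.

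For the reverse direction, fix any $\delta\in\D$ and any $t\in\T$, and let $k$ be the unique index with $t\in\T_k$. The task is to exhibit some $\bm{\delta}^\star\in\D_\K$ whose $k$-th component equals $\delta(t)$; the discrete hypothesis applied to this $\bm{\delta}^\star$, combined with piecewise constancy of $x^b,x^r,\bar{y}^+$, then yields the continuous inequality. The natural construction is $\delta^\star_k=\delta(t)$ and $\delta^\star_l=0$ for $l\neq k$. Membership in $\D_\K$ is easy: each entry satisfies $|\delta^\star_l|\leq 1$ since $\delta\in\set{L}(\T,[-1,1])$, and for any window of at most $\Gamma/\Delta t$ consecutive indices the sum $\sum_l |\delta^\star_l|$ is either $0$ or $|\delta(t)|\leq 1$, which is bounded by $\gamma/\Delta t$ because Assumption~\ref{Ass:div} guarantees $\gamma\geq \Delta t$.

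The main obstacle is essentially nonexistent; this is bookkeeping based on piecewise constancy together with Proposition~\ref{Prop:D}. The only point that deserves an explicit mention is the invocation of Assumption~\ref{Ass:div} to ensure $\gamma/\Delta t\geq 1$, which is what allows the single-spike vector $\bm{\delta}^\star$ to lie in $\D_\K$. I would close the proof of~(i), note that replacing $[\cdot]^+$ by $[\cdot]^-$ and $\bar{y}^+$ by $\bar{y}^-$ changes nothing in the argument, and conclude~(ii) analogously.
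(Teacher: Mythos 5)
Your proof is correct and rests on the same key observation as the paper's, namely that at any fixed time instant (resp.\ trading interval) the frequency deviation can attain every value in $[-1,1]$ within $\D$ (resp.\ $\D_\K$); the paper phrases this by equating the worst-case values of both constraints and identifying the maximizer $\delta = 1$ using the nonnegativity of $x^b$ and $x^r$, whereas you match scenarios directly in both directions via the lifting $L$ and a single-spike vector. Your explicit check that the spike lies in $\D_\K$ because $\gamma/\Delta t \geq 1$ is precisely the detail the paper leaves implicit in its claim that $\{\delta_k : \bm{\delta} \in \D_\K\} = [-1,1]$, so nothing is missing.
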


\begin{proof}
	Assertion~$(i)$ can be reexpressed as 
	\begin{equation*}
		\max_{t \in \T} \max_{\delta \in \D} y^+\left(x^b(t),x^r(t),\delta(t)\right) - \bar{y}^+(t) \leq 0 \iff \max_{k \in \K} \max_{\bm{\delta} \in \D_\K} y^+_k\left(x^b_k,x^r_k,\delta_k\right) - \bar{y}^+_k \leq 0.
	\end{equation*}
	We will prove this equivalence by showing that the left hand sides of the two inequalities are equal. Indeed, a direct calculation reveals that
	\begin{equation}
		\label{eq:yc}
		\begin{aligned}
			\max_{t \in \T} \max_{\delta \in \D} y^+\left(x^b(t),x^r(t),\delta(t)\right) - \bar{y}^+(t) = &
			\max_{t \in \T} \max_{-1 \leq \delta(t) \leq 1} \left[ x^b(t) + \delta(t) x^r(t) \right]^+ - \bar{y}^+(t) \\
			= & \max_{t \in \T} x^b(t) + x^r(t) - \bar{y}^+(t) \\
			= & \max_{k \in \K} x^b_k + x^r_k - \bar{y}^+_k \\
			= & \max_{k \in \K} \max_{-1 \leq \delta_k \leq 1} \left[x^b_k + \delta_k x^r_k\right]^+ - \bar{y}^+_k \\
			= & \max_{k \in \K} \max_{\bm{\delta} \in \D_\K} y^+\left(x^b_k,x^r_k,\delta_k\right) - \bar{y}^+_k,
		\end{aligned}
	\end{equation}
	where the first equality follows from the definition of $y^+$ in~\eqref{eq:y+} and the observation that $\{ \delta(t) : \delta \in \D \} = [-1,1]$, while the second equality holds because~$x^b(t) \geq 0$ and $x^r(t) \geq 0$ which implies that $\delta(t) = 1$ maximizes the instantaneous charging rate. The third equality exploits our assumption that $x^b, x^r$, and~$\bar{y}^+$ are piecewise constant functions. The fourth equality holds because~$x^b_k \geq 0$ and~$x^r_k \geq 0$, which implies that $\delta_k = 1$ maximizes the per-period charging rate. The fifth equality follows again from the definition of $y^+$ in~\eqref{eq:y+} and the observation that $\{\delta_k : \bm{\delta} \in \D_\K\} = [-1,1]$.
	
	The proof of assertion~$(ii)$ is similar and therefore omitted.
\end{proof}

\begin{Prop}\label{Prop:ybar}
	The following equivalence holds.
	\begin{equation*}
	 y\left(x^b,x^r,\delta,y_0,t\right) \leq \bar{y}~\forall \delta \in \D, \forall t \in \T
	 \iff y_{k}\left(\bm{x^b},\bm{x^r},\bm{\delta},y_0\right) \leq \bar{y}~\forall \bm{\delta} \in \D_\K,~\forall k \in \K\cup\{0\}
	\end{equation*}
\end{Prop}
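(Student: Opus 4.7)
The plan is to prove the two directions separately, with the reverse direction ($\Leftarrow$) containing the main work. The forward direction is immediate: for any $\bm{\delta} \in \D_\K$ and $k \in \K \cup \{0\}$, Proposition~\ref{Prop:D}(i) yields $L\bm{\delta} \in \D$ while $k\Delta t \in \T$, so evaluating the continuous constraint at $(\delta,t) = (L\bm{\delta}, k\Delta t)$ and recognizing that $y(L\bm{x^b}, L\bm{x^r}, L\bm{\delta}, y_0, k\Delta t) = y_k(\bm{x^b}, \bm{x^r}, \bm{\delta}, y_0)$ directly yields the discrete constraint.

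For the reverse direction I plan to show that for every $(\delta, t) \in \D \times \T$ there exists a pair $(\bm{\delta}', k') \in \D_\K \times (\K \cup \{0\})$ with $y(L\bm{x^b}, L\bm{x^r}, \delta, y_0, t) \leq y_{k'}(\bm{x^b}, \bm{x^r}, \bm{\delta}', y_0)$. The key tool is Jensen's inequality applied to the per-period integrand $g_l(\delta) := \eta^+[x^b_l + \delta x^r_l]^+ - [x^b_l + \delta x^r_l]^-/\eta^- - d_l$, which is concave in $\delta$ by the argument used in the proof of Proposition~\ref{Prop:y}. When $t = k\Delta t$, taking $\bm{\delta}' := L^\dagger \delta$---which lies in $\D_\K$ by Proposition~\ref{Prop:D}(i)---and summing the Jensen inequalities $\int_{\T_l} g_l(\delta(t'))\,dt' \leq \Delta t\,g_l((L^\dagger\delta)_l)$ over $l \leq k$ yields $y(\delta, k\Delta t) \leq y_k(L^\dagger\delta)$.

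For $t \in ((k-1)\Delta t, k\Delta t)$ the plan is a dichotomy on the sign of $I := \int_{(k-1)\Delta t}^t g_k(\delta(t'))\,dt'$. If $I \leq 0$, then $y(\delta, t) \leq y(\delta, (k-1)\Delta t)$ reduces to the endpoint case at $(k-1)\Delta t$. If $I > 0$, I set $\delta'_l := (L^\dagger\delta)_l$ for $l < k$ and $\delta'_l := 0$ for $l > k$, and pick $\delta'_k$ as a convex combination of the partial average $\tilde\delta_k := (t - (k-1)\Delta t)^{-1}\int_{(k-1)\Delta t}^t \delta(t')\,dt'$---which satisfies $g_k(\tilde\delta_k) > 0$ by Jensen---and a minimal-magnitude point $\delta^\circ_k \in [-1,1]$ at which $g_k(\delta^\circ_k) \geq 0$. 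Concavity of $g_k$ then yields $\Delta t\,g_k(\delta'_k) \geq I$, hence $y_k(\bm{\delta}') \geq y(\delta, t)$. Verifying $\bm{\delta}' \in \D_\K$ reduces to decomposing the sliding-window inequality $\int_{[k\Delta t - \Gamma]^+}^{k\Delta t}|\delta|\,dt' \leq \gamma$ (valid because $\delta \in \D$) across the sub-intervals before and within $\T_k$. The hardest sub-case is $g_k(0) < 0$ (equivalently $\eta^+ x^b_k < d_k$), where $\delta^\circ_k > 0$ forces $|\delta'_k|$ to carry extra weight; closing the argument there crucially leverages Assumption~\ref{Ass:div}, which guarantees that $\gamma/\Delta t$ and $\Gamma/\Delta t$ are positive integers so that the budget bookkeeping aligns cleanly with the trading-interval boundaries.
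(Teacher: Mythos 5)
Your forward direction and your treatment of the endpoints $t=k\Delta t$ are correct, and the endpoint argument is genuinely different from the paper's: you apply Jensen's inequality directly to the concave per-period integrand $g_l$ and use $L^\dagger\delta\in\D_\K$ (Proposition~\ref{Prop:D}), whereas the paper first replaces $\D$ by $\D^+$ (using that $y$ is nondecreasing in $\delta$ and $\delta\in\D\iff|\delta|\in\D^+$), which makes the integrand \emph{linear} in $\delta$ ($y^-$ vanishes when $x^b+\delta x^r\ge 0$), so that averaging over trading intervals is exact rather than an inequality. Both routes work at the endpoints; yours is arguably more economical there.

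The genuine gap is in your interior case, specifically the sub-case $I>0$ with $g_k(0)<0$. Your construction sets $\delta'_k=\lambda\tilde\delta_k+(1-\lambda)\delta^\circ_k$ with $\delta^\circ_k>0$, and the budget bound you can actually extract from $\delta\in\D$ is $\sum_{l}|(L^\dagger\delta)_l|+\lambda|\tilde\delta_k|\le\gamma/\Delta t$; the extra mass $(1-\lambda)\delta^\circ_k>0$ is \emph{not} covered by this bound and can push the window sum over $\gamma/\Delta t$ (e.g.\ $\Delta t=1$, $\Gamma=2$, $\gamma=1$, $\delta\equiv 1$ on $[0.5,1.5]$, $t=1.5$, $\delta^\circ_2=0.6$ gives $0.5+0.8>1$), and in such configurations no admissible $\delta'_k$ within the remaining budget attains $g_k(\delta'_k)\ge\lambda g_k(\tilde\delta_k)$, since $g_k<0$ below $\delta^\circ_k$. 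Assumption~\ref{Ass:div} does not repair this: divisibility only aligns window boundaries with trading intervals and has no bearing on the overshoot. What actually closes the sub-case is that it is \emph{vacuous}: $g_k(0)<0$ forces $d_k>0$, i.e.\ the vehicle is driving in $\T_k$, whence $\bar y^+_k=\bar y^-_k=0$ and (via the charger-power constraints, as the paper implicitly uses) $x^b_k=x^r_k=0$, so $g_k\equiv -d_k<0$ and $I\le 0$ after all. The paper avoids the entire budget bookkeeping for interior $t$ by this driving/parked dichotomy: when driving, $y$ is nonincreasing on $\T_k$ and is dominated by its value at $(k-1)\Delta t$; when parked, $d=0$ and, after restricting to $\D^+$, $y$ is nondecreasing on $\T_k$ and is dominated by its value at $k\Delta t$. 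You should either invoke that dichotomy or explicitly argue the vacuity of your hard sub-case; as written, the claimed appeal to Assumption~\ref{Ass:div} does not establish $\bm\delta'\in\D_\K$.
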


\begin{proof}
	The claim follows if we can show that 
	\begin{equation}\label{eq:ybar}
		\max_{t \in \T} \max_{\delta \in \D} y\left(x^b,x^r,\delta,y_0,t\right) = \max_{k\in\K \cup \{0\}} \max_{\bm{\delta} \in \D_\K} y_{k}\left(\bm{x^b},\bm{x^r},\bm{\delta},y_0\right).
	\end{equation}
	To this end, assume first that $t=k\Delta t$ for some $k \in \K \cup \{0\}$. In this case, we have
	\begin{equation}
		\label{eq:ybar_full}
		\begin{aligned}
			\max_{\delta \in \D} y\left(x^b,x^r,\delta,y_0,t\right)
			= & \max_{\delta \in \D^+} y\left(x^b,x^r,\delta,y_0,t\right) \\
			= & y_0 + \max_{\delta \in \D^+} \int_0^{t} \eta^+ \left( x^b(t') + \delta(t') x^r(t') \right) - d(t') \, \mathrm{d}t' \\
			=& y_0 + \max_{\delta \in \D^+} \sum_{l=1}^{k} \int_{\T_l} \eta^+ \left( x^b_l + \delta(t') x^r_l \right) - d_l \, \mathrm{d}t' \\
			=& y_0 + \max_{\delta \in \D^+} \Delta t \sum_{l=1}^{k} \eta^+ \left( x^b_l + (L^\dagger\delta)_l x^r_l \right) - d_l\\
			=& y_0 + \max_{\bm{\delta} \in \D^+_\K} \Delta t \sum_{l=1}^{k} \eta^+ \left( x^b_l + \delta_l x^r_l \right) - d_l\\
			= & \max_{\bm{\delta} \in \D^+_\K} y_{k}\left(\bm{x^b},\bm{x^r},\bm{\delta},y_0\right)
			= \max_{\bm{\delta} \in \D_\K} y_{k}\left(\bm{x^b},\bm{x^r},\bm{\delta},y_0\right),
		\end{aligned}
	\end{equation}
	where the first equality holds because $\delta \in \D$ if and only if $\vert \delta \vert \in \D^+$ and because $y$ is nondecreasing in~$\delta$ thanks to Proposition~\ref{Prop:y}. The second equality follows from the definitions of $y$, $y^+$, and $y^-$ and from the non-negativity of $x^b$, $x^r$ and $\delta$. The third equality exploits our assumption that $d$, $x^b$ and $x^r$ are piecewise constant. As $\delta$ is integrated against a piecewise constant function, it may be averaged over the trading intervals without changing its objective function value. The fifth equality then follows from Proposition~\ref{Prop:D}, while the sixth equality follows from the definitions of $y_k$, $y^+$ and $y^-$ and from the non-negativity of $\bm{x^b}$, $\bm{x^r}$ and $\bm{\delta}$. The seventh equality, finally, holds because $\bm{\delta} \in \D_\K$ if and only if $\vert \bm{\delta} \vert \in \D^+_\K$ and because $y_k$ is nondecreasing in~$\bm{\delta}$ thanks to Proposition~\ref{Prop:yc}. 
	
	Assume now more generally that $t \in \T_k$ for some $k \in \K$. If the vehicle is driving in trading interval~$\T_k$, then $\bar{y}^+(t) = \bar{y}^-(t) = 0$ for all $t\in \T_k$. Thus, we have
	\begin{align*}
		\max_{\delta \in \D} y\left(x^b,x^r,\delta,y_0,t\right) = & \max_{\delta \in \D} y\left(x^b,x^r,\delta,y_0,(k-1)\Delta t\right) - \int_{(k-1) \Delta t}^{t} d(t') \, \mathrm{d}t' \\
		\leq &  \max_{\delta \in \D} y\left(x^b,x^r,\delta,y_0,(k-1)\Delta t\right) = \max_{\bm{\delta} \in \D_\K} y_{k-1}\left(\bm{x^b},\bm{x^r},\bm{\delta},y_0\right) ~\forall t \in \T_k,
	\end{align*}
	where the inequality holds because $d(t) \geq 0$ for all $t\in \T_k$, and the second equality follows from the first part of the proof. Alternatively, if the vehicle is parked in trading interval~$\T_k$, then $d(t) = 0$ for all $t\in \T_k$. Thus, we have 
	\begingroup
	\allowdisplaybreaks
	\begin{align*}
		\max_{\delta \in \D} y\big(x^b,x^r,\delta,y_0,t\big) &
		= \max_{\delta \in \D^+} y\big(x^b,x^r,\delta,y_0,t\big) \\
		&= \max_{\delta \in \D^+} y\big(x^b,x^r,\delta,y_0,(k-1)\Delta t\big) + \int_{(k-1) \Delta t}^{t} \eta^+ y^+\big(x^b(t'),x^r(t'),\delta(t')\big) \, \mathrm{d}t' \\
		& \leq \max_{\delta \in \D^+} y\big(x^b,x^r,\delta,y_0,k\Delta t\big) = \max_{\bm{\delta} \in \D_\K^+} y_{k}\big(\bm{x^b},\bm{x^r},\bm{\delta},y_0\big) = \max_{\bm{\delta} \in \D_\K} y_{k}\big(\bm{x^b},\bm{x^r},\bm{\delta},y_0\big)
	\end{align*}
	\endgroup
	for all $t \in \T_k$, where the inequality holds because the integral is nondecreasing in~$t$, and the equalities follow from the first part of the proof. In summary, we have shown that
	\begin{equation*}
		\max_{\delta \in \D} y\left(x^b,x^r,\delta,y_0,t\right) \leq \max\left\{ \max_{\bm{\delta} \in \D_\K} y_{k-1}\left(\bm{x^b},\bm{x^r},\bm{\delta},y_0\right), \max_{\bm{\delta} \in \D_\K} y_{k}\left(\bm{x^b},\bm{x^r},\bm{\delta},y_0\right) \right\}
	\end{equation*}
	for all $t \in \T_k$ and $k \in \K$. This implies that 
	\begin{equation*}
		\max_{t \in \T} \max_{\delta \in \D} y\left(x^b,x^r,\delta,y_0,t\right) \leq \max_{k \in \K \cup \{0\}} \max_{\bm{\delta} \in \D_\K} y_{k}\left(\bm{x^b},\bm{x^r},\bm{\delta},y_0\right).
	\end{equation*}
	On the other hand, we have
	\begin{equation*}
		\max_{t \in \T} \max_{\delta \in \D} y\left(x^b,x^r,\delta,y_0,t\right) \geq 
		\max_{k \in \K \cup \{0\}} \max_{\delta \in \D} y\left(x^b,x^r,\delta,y_0,k\Delta t\right) =
		\max_{k \in \K \cup \{0\}} \max_{\bm{\delta} \in \D_\K} y_{k}\left(\bm{x^b},\bm{x^r},\bm{\delta},y_0\right),
	\end{equation*}
	where the equality follows from the first part of the proof. Combining the above inequalities implies~\eqref{eq:ybar}, and thus the claim follows.
\end{proof}

\begin{Prop}\label{Prop:yubar}
	The following equivalence holds.
	\begin{equation*}
	y(x^b,x^r,\delta,y_0,t) \geq \ubar{y}~\forall \delta \in \D, \forall t \in \T
	\iff y_{k}(\bm{x^b},\bm{x^r},\bm{\delta},y_0) \geq \ubar{y}~\forall \bm{\delta} \in \D_\K, \forall k \in \K \cup \{0\}
	\end{equation*}
\end{Prop}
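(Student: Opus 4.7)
The approach will parallel the proof of Proposition~\ref{Prop:ybar}. The essential difference is that, after restricting to the worst-case $\delta \leq 0$ via Proposition~\ref{Prop:y}, the integrand becomes \emph{concave} rather than linear in $\delta$, so the averaging-via-$L^\dagger$ trick of Proposition~\ref{Prop:ybar} must be replaced by a bang-bang analysis combined with a total-unimodularity argument. My plan is to establish
\[
\min_{\delta \in \D}\, y(x^b,x^r,\delta,y_0,k\Delta t) \;=\; \min_{\bm\delta \in \D_\K}\, y_k(\bm{x^b},\bm{x^r},\bm\delta,y_0)
\]
for every $k \in \K \cup \{0\}$, and then extend the identity to arbitrary $t \in \T_k$ via the same driving/parked case distinction used in the closing paragraph of the proof of Proposition~\ref{Prop:ybar}.

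For the grid-point equality, I first use Proposition~\ref{Prop:y} and the substitution $\delta = -\delta'$ with $\delta' \in \D^+$ to rewrite the minimisation as
\[
\min_{\delta' \in \D^+}\; y_0 + \sum_{l=1}^k \left(\int_{\T_l} \phi\bigl(x^b_l - \delta'(t')\,x^r_l\bigr) \, dt' - d_l \Delta t\right),
\]
where $\phi(a) := \min\{\eta^+ a,\, a/\eta^-\}$ is the concave function appearing in the proof of Proposition~\ref{Prop:y}, and Assumption~\ref{Ass:cst} has been used to pull the piecewise-constant coefficients out of the integrals. Concavity of $\phi$ yields the pointwise inequality $\phi(x^b_l - u\,x^r_l) \geq (1-u)\,\phi(x^b_l) + u\,\phi(x^b_l - x^r_l)$ for $u \in [0,1]$, which, integrated over $\T_l$ with $B_l := \int_{\T_l}\delta'$, gives the lower bound $(\Delta t - B_l)\,\phi(x^b_l) + B_l\,\phi(x^b_l - x^r_l)$; this bound is tight for bang-bang $\delta' \in \{0,1\}$. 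The continuous minimisation therefore collapses to the finite-dimensional LP of minimising $\sum_l [(\Delta t - B_l)\phi(x^b_l) + B_l\,\phi(x^b_l - x^r_l)]$ over $\bm B \in [0,\Delta t]^K$ subject to the aligned sliding-window budget constraints $\sum_{l=1+[j-\Gamma/\Delta t]^+}^j B_l \leq \gamma$ for $j \in \K$; the non-aligned sliding-window constraints are automatically implied for bang-bang solutions by linear interpolation in the window offset.

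The crux is then a total-unimodularity observation: the consecutive-ones constraint matrix of this LP is totally unimodular and, by Assumption~\ref{Ass:div}, all right-hand sides are integer multiples of $\Delta t$, so every vertex optimum satisfies $B_l \in \{0,\Delta t\}$. At such a vertex the optimal $\delta'$ is constant on each $\T_l$, so $-\delta' = L\bm\delta$ for some $\bm\delta \in \D_\K$, and the continuous objective value coincides with $y_k(\bm{x^b},\bm{x^r},\bm\delta,y_0)$. Combining this with Proposition~\ref{Prop:D} and the monotonicity of $y_k$ in $\bm\delta$ from Proposition~\ref{Prop:yk} yields the desired equality at the grid point $t = k\Delta t$. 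The extension to intermediate $t \in \T_k$ then follows essentially verbatim from the corresponding step in the proof of Proposition~\ref{Prop:ybar}: whether the vehicle drives or parks in $\T_k$, monotonicity arguments applied to the worst-case $\delta$ show that the worst-case $y$ over $\T_k$ is dominated by its value at an adjacent grid endpoint. The main obstacle is thus the bang-bang plus total-unimodularity reduction, where the non-convexity of $\phi$ must be reconciled with the combinatorial structure of the budget uncertainty set.
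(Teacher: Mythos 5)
Your grid-point argument is sound and in fact somewhat more direct than the paper's: the secant bound $\phi(x^b_l-u\,x^r_l)\ge(1-u)\phi(x^b_l)+u\,\phi(x^b_l-x^r_l)$, combined with the fact that the resulting lower bound depends on $\delta'$ only through the interval averages $L^\dagger\delta'$ (whose range is exactly $\D^+_\K$ by Proposition~\ref{Prop:D}) and with the total unimodularity of $\D^+_\K$, replaces the paper's Lemma~\ref{lem:LDR_Opt} and its $N\to\infty$ Riemann-sum refinement with a one-shot linear-decision-rule argument. That part I would accept.

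The gap is in the extension to intermediate $t\in\T_k$ in the parked case, which you dismiss as following ``essentially verbatim'' by monotonicity. It does not. In Proposition~\ref{Prop:ybar} the parked-interval step is a genuine one-liner because the worst case for the \emph{upper} bound is charging, the marginal integrand $\eta^+y^+$ is non-negative, and hence the worst-case state-of-charge is nondecreasing in $t$ across the interval. For the \emph{lower} bound the analogous quantity $g(t)=\min_{\delta\in\D}y(x^b,x^r,\delta,y_0,t)$ equals an affine function of $t$ minus $\Delta t\,h(\tau)$, where $h(\tau)=\max\{\Delta t\sum_{l\le k}m_l\delta_l:\bm\delta\in\D^+_\K,\ \delta_k\le\tau\}$ and $\tau=(t-(k-1)\Delta t)/\Delta t$. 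As the value of a maximization LP in a right-hand-side parameter, $h$ is concave and nondecreasing in $\tau$, so $g$ is \emph{convex} in $t$ on $\T_k$ --- and a convex function can attain its minimum strictly inside the interval, below both endpoint values, which would break the equivalence. No monotonicity argument rules this out: whether the adversary profitably spends its remaining budget on the marginal slice of $\T_k$ depends on the sign of $\eta^+x^b_k-m_k$ and on how that budget competes with earlier, more valuable slots, so $g$ is neither monotone increasing nor monotone decreasing in general. What is actually needed --- and what the paper proves --- is that $h$ has no breakpoint in the open interval $\tau\in(0,1)$, i.e.\ that $g$ is \emph{affine} on $\T_k$. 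This is established by a local sensitivity analysis of the parametric LP: a single optimal basis $\bm B$ at an interior $\tau_0$ remains primal feasible for all $\tau\in[0,1]$ because $\bm B^{-1}$ is totally unimodular, so $\bm B^{-1}\bm e_{K+k}\in\{-1,0,1\}^{2K}$ and the integral basic solution at $\tau\in\{0,1\}$ cannot cross zero for $|\tau-\tau_0|<1$. You would need to supply this (or an equivalent) argument; without it the reduction of $\min_{t\in\T_k}$ to the two grid endpoints is unjustified.
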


The proof of Proposition~\ref{Prop:yubar} is significantly more challenging than that of Proposition~\ref{Prop:ybar} because $y(x^b,x^r,\delta,y_0,t)$ is concave in~$\delta$. We make it more digestible by first proving two lemmas.

\begin{lem}\label{lem:LDR_Opt}
	If $f:\mathbb{R} \times \T \to \mathbb{R}$ is concave, continuous and nonincreasing in its first argument and piecewise constant on the trading intervals $\T_k$, $k \in \K$, in its second argument, then
	\begin{equation*}
		\min_{\delta \in \D^+} \int_{0}^{t} f\left(\delta(t'),t'\right) \, \mathrm{d}t' = \min_{\delta \in \D^+ \cap \set{L}(\T, \{0,1\})} \int_{0}^t f\left(\delta(t'),t'\right) \, \mathrm{d}t'\quad \forall t \in \T.
	\end{equation*}
\end{lem}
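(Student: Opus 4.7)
The plan is to sandwich the concave functional $I(\delta) := \int_0^t f(\delta(t'), t') \, \mathrm{d}t'$ between itself and a linear surrogate $J(\delta)$, and then use total unimodularity (Lemma~\ref{lem:TUM}) to argue that the minimum of $J$ over $\D^+$ is already attained at a $\{0,1\}$-valued function. Since $J$ agrees with $I$ on binary functions, both minima coincide.

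\emph{Step 1 (Linearization via concavity).} First I would exploit that $\sigma \mapsto f(\sigma, t')$ is concave on $[0,1]$, so its graph lies above the chord joining $(0, f(0,t'))$ and $(1, f(1,t'))$. Hence, for every $\delta(t') \in [0,1]$,
\[ f(\delta(t'), t') \;\geq\; \delta(t') f(1,t') + (1-\delta(t')) f(0,t'). \]
Integrating over $[0,t]$ yields $I(\delta) \geq J(\delta)$, where $J(\delta) := \int_0^t [\delta(t')(f(1,t')-f(0,t')) + f(0,t')] \, \mathrm{d}t'$ is affine in $\delta$ and well-defined because $f(0,\cdot)$ and $f(1,\cdot)$ are piecewise constant. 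Crucially, the chord bound is tight whenever $\delta(t') \in \{0,1\}$, so $I(\delta) = J(\delta)$ for every $\delta \in \D^+ \cap \set{L}(\T, \{0,1\})$.

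\emph{Step 2 (Binary optimum of the linear problem via TUM).} The coefficient $g(t') := f(1,t') - f(0,t')$ inherits the piecewise-constant structure of $f$, and, since $f$ is nonincreasing in its first argument, $g(t') \leq 0$. When $t$ is a trading-interval boundary, $\int_0^t \delta(t') g(t') \, \mathrm{d}t'$ depends on $\delta$ only through its per-interval averages $L^\dagger \delta$, so by Proposition~\ref{Prop:D} the problem $\min_{\delta \in \D^+} J(\delta)$ reduces to the finite linear program $\min_{\bm\delta \in \D^+_\K} \Delta t \sum_l g_l \delta_l + \mathrm{const}$. Lemma~\ref{lem:TUM}, which will provide a totally unimodular description of $\D^+_\K$ with integer right-hand sides, ensures that this LP admits an optimizer $\bm\delta^\star \in \{0,1\}^K$. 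The lifted function $L\bm\delta^\star$ then lies in $\D^+ \cap \set{L}(\T, \{0,1\})$ and attains $\min_{\delta \in \D^+} J(\delta)$.

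\emph{Step 3 (Sandwich) and main obstacle.} Combining the two steps chains
\[ \min_{\delta \in \D^+} I(\delta) \;\geq\; \min_{\delta \in \D^+} J(\delta) \;=\; \min_{\delta \in \D^+ \cap \set{L}(\T,\{0,1\})} J(\delta) \;=\; \min_{\delta \in \D^+ \cap \set{L}(\T,\{0,1\})} I(\delta), \]
while the reverse inequality is trivial as the right-most feasible set is contained in $\D^+$. The main obstacle is Step~2 when $t$ lies in the interior of some $\T_k$: the value of $J$ then also depends on $\int_{(k-1)\Delta t}^{t} \delta(t') \, \mathrm{d}t'$, which is not captured by $L^\dagger \delta$. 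I expect to handle this by treating the truncated sub-interval as an additional block of length $t - (k-1)\Delta t$, introducing one extra variable and one extra row, and verifying that the enlarged constraint system remains totally unimodular—precisely the sort of structural bookkeeping that Lemma~\ref{lem:TUM} is designed to deliver.
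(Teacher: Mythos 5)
Your argument is correct in its core and takes a genuinely different, and in fact more elementary, route than the paper. The paper does not linearize: it keeps the concave functional, refines the partition of each trading interval into $N$ sub-intervals, shows via upper and lower Riemann sums that the discretized infima over the refined polytopes $\D^+_{KN}$ converge to the continuous infimum, argues that a concave objective attains its minimum over $\D^+_{KN}$ at a vertex, invokes total unimodularity (Lemma~\ref{lem:TUM}) to conclude those vertices are binary, and then passes to the limit in $N$ with an $\epsilon$--$N_\epsilon$ argument. Your chord bound $f(\sigma,t')\geq\sigma f(1,t')+(1-\sigma)f(0,t')$, tight at $\sigma\in\{0,1\}$, replaces all of this limiting machinery: because the surrogate $J$ is linear in $\delta$ and its coefficients are piecewise constant, $J$ depends on $\delta$ only through $L^\dagger\delta$, so Proposition~\ref{Prop:D} reduces the problem exactly to a finite LP over $\D^+_\K$ at the coarse level $N=1$, and Lemma~\ref{lem:TUM} finishes the job. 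The sandwich $\min_{\D^+}I\geq\min_{\D^+}J=\min_{\mathrm{bin}}J=\min_{\mathrm{bin}}I\geq\min_{\D^+}I$ is airtight, and it even yields attainment of the continuous minimum by a binary scenario. What the paper's heavier route buys is that the same refinement apparatus ($L_N$, $\D^+_{KN}$) is reused to treat arbitrary $t$; what your route buys is brevity and the avoidance of any convergence analysis for the nonlinear functional.

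The one place where your plan is weaker than you suggest is the case where $t$ lies in the interior of a trading interval. Appending the truncated block as ``one extra variable and one extra row'' does not preserve the integrality argument: either the budget rows acquire non-unit coefficients ($\Delta t$ versus $\tau=t-(k-1)\Delta t$), destroying total unimodularity, or the right-hand side acquires the fractional entry $\tau/\Delta t$, so that integral vertices are no longer guaranteed even though the matrix stays totally unimodular. The paper sidesteps this by aligning the refinement with $t$ when $t$ is a multiple of $\Delta t/N$ (zeroing the integrand beyond $t$) and appealing to continuity for the remaining $t$; a parallel fix is available to you (refine to level $N$ so that $t$ is a grid point, where your linear reduction remains exact), but the fix you actually describe would need repair. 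Since the paper itself relegates this case to ``obvious minor modifications'' and ``a continuity argument,'' this is a gap of comparable size to the one the authors tolerate, but you should not expect the literal one-extra-row construction to go through.
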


\begin{proof}
	For ease of exposition, assume first that~$t = T$ and define $t_k = \Delta t (k - 1)$ for every $k \in \K$. For every approximation parameter $N \in \mathbb{N}$ we define $\set{N} = \{1,\ldots,N\}$ and set
	\begin{equation*}
		\T_{k,n}^N = \left[\Delta t \left(k - 1 + \frac{n-1}{N}\right), \Delta t \left(k - 1 + \frac{n}{N}\right) \right) \quad \forall k \in \K, ~ \forall n \in \set{N}.
	\end{equation*}
	Note that the $\T^N_{k,n}$, $n \in \set{N}$, are mutually disjoint and that their union coincides with the $k$-th trading interval $\T_k$. Next, introduce a lifting operator $L_N: \mathbb{R}^{K \times N} \to \set{L}(\T, \mathbb{R})$ defined through $(L_N \bm{\delta})(t) = \delta_{k,n}$ if $t \in \T^N_{k,n}$ for $k \in \K$ and $n \in \set{N}$. In addition, let $L^\dagger_N : \set{L}(\T,\mathbb{R}) \to \mathbb{R}^{K \times N}$ be the corresponding adjoint operator defined through $(L^\dagger_N \delta)_{k,n} = \frac{N}{\Delta t} \int_{\T^N_{k,n}} \delta(t) \, \mathrm{d}t$ for $k \in \K$ and $n \in \set{N}$. Using this notation, we first prove that
	\begin{equation}\label{eq:Rie_Int}
		\lim_{N \to \infty} \sum_{k \in \K} \sum_{n \in \set{N}} f\left( (L^\dagger_N \delta)_{k,n}, t_k \right) \frac{\Delta t}{N} = \int_0^T f\left(\delta(t),t\right) \, \mathrm{d}t
	\end{equation}
	for any fixed $\delta \in \D^+$. As $f$ is continuous and nonincreasing in its first argument and piecewise constant in its second argument, we have
	\begin{align*}
		\inf_{t \in \T^N_{k,n}} f\left( \delta(t), t \right) = f\bigg( \sup_{t \in \T^N_{k,n}} \delta(t), t_k \bigg) \leq f\left( \left(L^\dagger_N \delta\right)_{k,n}, t_k \right) \leq f\bigg( \inf_{t \in \T^N_{k,n}} \delta(t), t_k \bigg) = \sup_{t \in \T^N_{k,n}} f\left( \delta(t), t \right)
	\end{align*} 
	for every $k \in \K$, $n \in \set{N}$ and $N \in \mathbb{N}$. Summing over $k$ and $n$ thus yields 
	\begin{equation*}
		\sum_{k \in \K} \sum_{n \in \set{N}} \inf_{t \in \T^N_{k,n}} f\left( \delta(t), t \right) \frac{\Delta t}{N} \leq \sum_{k \in \K} \sum_{n \in \set{N}} f\left( \left(L^\dagger_N \delta \right)_{k,n}, t_k \right) \frac{\Delta t}{N} \leq  \sum_{k \in \K} \sum_{n \in \set{N}} \sup_{t \in \T^N_{k,n}} f\left( \delta(t), t \right) \frac{\Delta t}{N}
	\end{equation*}
	for every $N \in \mathbb{N}$. As $f(\delta(t),t)$ constitutes a composition of a continuous function with a Riemann integrable function, it is also Riemann integrable. Thus, the lower and upper Riemann sums in the above inequality both converge to $\int_0^T f(\delta(t),t) \, \mathrm{d}t$ as $N$ tends to infinity. This observation establishes~\eqref{eq:Rie_Int}. As $\delta \in \D^+$ was chosen arbitrarily, we may thus conclude that
	\begin{equation*}
		\inf_{\delta \in \D^+} \int_{0}^{T} f\left( \delta(t),t\right) \, \mathrm{d}t = \inf_{\delta \in \D^+} \lim_{N \to \infty} \sum_{k \in \K} \sum_{n \in \set{N}} f\left( \left(L^\dagger_N\delta\right)_{k,n}, t_k \right) \frac{\Delta t}{N}.
	\end{equation*}
	For the following derivations we introduce the auxiliary uncertainty set 
	\begin{equation*}
		\D^+_{KN} = \left\{ \bm{\delta} \in \left[-1,1\right]^{KN} : \sum_{l = 1 + [m - N \Gamma/\Delta t]^+}^{m}  \delta_{l} \leq N \frac{\gamma}{\Delta t} \quad \forall m = 1,\ldots,KN \right\}
	\end{equation*}
	for $N \in \mathbb{N}$. By slight abuse of notation, we henceforth naturally identify any matrix $\bm \delta \in \R^{K \times N}$ with the vector obtained by concatenating the rows of $\bm \delta$. This convention allows us, for example, to write $\bm \delta \in \D^+_{KN}$ even if $\bm \delta$ was initially defined as a~$K \times N$-matrix. By repeating the arguments of Proposition~\ref{Prop:D}, it is easy to show that $L_N \D^+_{KN} \subseteq \D^+$ and $L^\dagger_N\D^+ = \D^+_{KN}$ for all $N \in \mathbb{N}$. Using these relations, we will now prove that
	\begin{equation}\label{eq:Lim_Inf}
		\inf_{\delta \in \D^+} \int_{0}^{T} f\left( \delta(t),t \right) \, \mathrm{d}t =  \lim_{N \to \infty} \inf_{\bm{\delta} \in \D^+_{KN}} \sum_{k \in \K} \sum_{n \in \set{N}} f\left( \delta_{k,n}, t_k \right) \frac{\Delta t}{N}.
	\end{equation}
	To this end, select any $\epsilon > 0$ and $\delta^\star \in \D^+ $ with $\int_0^T f(\delta^\star(t),t) \, \mathrm{d}t \leq \inf_{\delta \in \D^+} \int_0^{T} f(\delta(t),t) \, \mathrm{d}t + \epsilon$, and choose $N_\epsilon$ large enough such that 
	\begin{equation*}
		\left \vert \sum_{k \in \K} \sum_{n \in \set{N}} f\left( \left(L^\dagger_N \delta^\star\right)_{k,n}, t_k \right) \frac{\Delta t}{N} - \int_0^T f\left( \delta^\star(t), t \right) \, \mathrm{d}t \right \vert \leq \epsilon \quad \forall N \geq N_\epsilon.
	\end{equation*}
	Note that such an $N_\epsilon$ exists thanks to~\eqref{eq:Rie_Int}. For any $N \geq N_\epsilon$, we thus find
	\begin{align*}
		0 \leq & \inf_{\bm{\delta} \in \D^+_{KN}} \sum_{k \in \K} \sum_{n \in \set{N}} f\left( \delta_{k,n}, t_k \right) \frac{\Delta t}{N} - \inf_{\delta \in \D^+} \int_{0}^{T} f\left( \delta(t),t \right) \, \mathrm{d}t \\
		\leq & \inf_{\bm{\delta} \in \D^+_{KN}} \sum_{k \in \K} \sum_{n \in \set{N}} f\left( \delta_{k,n}, t_k \right) \frac{\Delta t}{N} - \int_0^T f\left( \delta^\star(t), t \right) \, \mathrm{d}t + \epsilon \\
		\leq & \sum_{k \in \K} \sum_{n \in \set{N}} f\left( \left(L^\dagger_N \delta^\star\right)_{k,n}, t_k \right) \frac{\Delta t}{N} - \int_0^T f\left( \delta^\star(t), t \right) \, \mathrm{d}t + \epsilon \leq 2\epsilon,
	\end{align*}
	where the first inequality holds because $L_N\D^+_{KN} \subseteq \D^+$, the second inequality follows from the choice of $\delta^\star$, the third inequality exploits the identity $L^\dagger_N\D^+ = \D^+_{KN}$, and the fourth inequality holds because $N \geq N_\epsilon$. As $\epsilon > 0$ was chosen arbitrarily, Equation~\eqref{eq:Lim_Inf} follows. 
	
	In order to prove that
	\begin{equation}\label{eq:Inf_dis}
		\inf_{\delta \in \D^+} \int_{0}^{T} f\left( \delta(t), t \right) \, \mathrm{d}t 
		= \inf_{\delta \in \D^+ \cap \set{L}(\T,\{0,1\})} \int_0^T f\left( \delta(t), t \right) \, \mathrm{d}t, 
	\end{equation}
	we first observe that
	\begin{align}
		\inf_{\delta \in \D^+} \int_{0}^{T} f\left( \delta(t), t \right) \, \mathrm{d}t
		= & \lim_{N \to \infty} \inf_{\bm{\delta} \in \D^+_{KN}} \sum_{k \in \K} \sum_{n \in \set{N}} f\left( \delta_{k,n}, t_k \right) \frac{\Delta t}{N} \notag \\
		= & \lim_{N \to \infty} \inf_{\bm{\delta} \in \D^+_{KN} \cap \{0,1\}^{KN}} \sum_{k \in \K} \sum_{n \in \set{N}} f\left( \delta_{k,n}, t_k \right) \frac{\Delta t}{N}.\label{eq:Sum_bin}
	\end{align}
	Here, the first equality follows from~\eqref{eq:Lim_Inf}, and the second equality holds because $f$ is concave in its first argument, which implies that the minimum over $\delta$ is attained at a vertex of the polyhedron $\D^+_{KN}$. As all vertices of $\D^+_{KN}$ are binary by virtue of Lemma~\ref{lem:TUM} below, we can restrict $\bm{\delta}$ to $\{0,1\}^{K \times N}$ without loss of optimality. To prove~\eqref{eq:Inf_dis}, select any $\epsilon > 0$ and $N \in \mathbb{N}$ large enough such that
	\begin{equation}
		\label{eq:Min_inf}
		\left \vert \min_{\bm{\delta} \in \D^+_{KN} \cap \{0,1\}^{K \times N}} \sum_{k \in \K} \sum_{n \in \set{N}} f\left( \delta_{k,n}, t_k \right) \frac{\Delta t}{N} - \inf_{\delta \in \D^+} \int_{0}^{T} f\left( \delta(t), t \right) \, \mathrm{d}t \right \vert \leq \epsilon.
	\end{equation}
	Note that such an $N$ exists because of~\eqref{eq:Sum_bin}. Next, let $\bm{\delta}^\star$ be a minimizer of the discrete optimization problem on the left hand side of the above expression, and set $\delta^\star = L_N \bm \delta^\star$. By~\eqref{eq:Min_inf} and because $\delta^\star$ is constant on the intervals $\T^N_{k,n}$, we thus have 
	\begin{equation*}
		\left\vert \int_0^T f(\delta^\star(t),t) \, \mathrm{d}t - \inf_{\delta \in \D^+} \int_0^T f(\delta(t),t) \, \mathrm{d}t \right\vert \leq \epsilon.
	\end{equation*}
	As $L_N \D^+_{KN} \subseteq \D^+$ and $\bm \delta^\star \in \{0,1\}^{K \times N}$, we further have $\delta^\star \in \D^+ \cap \set{L}(\T,\{0,1\})$. As $\epsilon$ was chosen arbitrarily, Equation~\eqref{eq:Inf_dis} follows.
	
	If $t \in \T$ is a multiple of $1/(KN)$ for some $N \in \mathbb{N}$, then $f(\delta(t'),t')$ can be set to $0$ for all $t' \geq t$, and the above proof remains valid with obvious minor modifications. For any other $t \in \T$, the claim follows from a continuity argument. Details are omitted for brevity.
\end{proof}

\begin{lem}\label{lem:TUM}
	For any $N \in \mathbb{N}$, all vertices of the polyhedron 
	\begin{equation*}
		\D^+_{KN} = \left\{ \bm{\delta} \in \left[0,1\right]^{KN} : \sum\limits_{l= 1+\left[m - N\Gamma/\Delta t\right]^+}^{m} \delta_l \leq N\frac{\gamma}{\Delta t} ~ \forall m = 1,\ldots,KN \right\}
	\end{equation*}
	are binary vectors.
\end{lem}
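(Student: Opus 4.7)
The plan is to establish that the constraint matrix defining~$\D^+_{KN}$ is totally unimodular and that the right-hand side is integer-valued, whereupon the classical Hoffman-Kruskal theorem will imply that every vertex of~$\D^+_{KN}$ is integral; combined with the box constraints~$\bm 0 \leq \bm \delta \leq \bm 1$, integrality then forces each vertex to be binary.

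First, I would rewrite the polyhedron as $\D^+_{KN} = \{ \bm \delta \in \R^{KN} : A \bm \delta \leq \bm b,\ \bm 0 \leq \bm \delta \leq \bm 1\}$, where $A \in \{0,1\}^{KN \times KN}$ has $A_{ml} = 1$ precisely when $1 + [m - N\Gamma/\Delta t]^+ \leq l \leq m$ and $A_{ml} = 0$ otherwise, and $\bm b = (N\gamma/\Delta t)\,\bm 1$. By Assumption~\ref{Ass:div}, the scalar $\gamma/\Delta t$ is a positive integer, so $\bm b$ is integer-valued; the zero and one bounds are obviously integer as well.

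Second, I would prove that $A$ is totally unimodular. The key structural observation is that, in each row of $A$, the nonzero entries occupy a contiguous block of columns (namely the columns from $1 + [m - N\Gamma/\Delta t]^+$ up to $m$); in other words, $A$ enjoys the \emph{consecutive-ones property in rows}. This is a classical sufficient condition for total unimodularity. A compact self-contained justification takes any square submatrix $B$ of $A$, reorders its columns in increasing order of original index (which preserves the consecutive-ones property and only changes $\det B$ by a sign), and then performs the elementary column operation $c'_j := c_j - c_{j-1}$ with the convention $c_0 := \bm 0$. This operation preserves $\vert \det B \vert$ and produces a matrix whose every row contains at most one $+1$ and at most one $-1$, i.e., an arc-node incidence matrix of a directed graph, whose determinant is well known to lie in~$\{-1,0,1\}$. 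Appending the identity blocks that encode the box constraints preserves total unimodularity.

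Third, I would conclude by invoking the Hoffman-Kruskal theorem: since the full constraint matrix is totally unimodular and its right-hand side is integer, every vertex of $\D^+_{KN}$ is integer. Because $\D^+_{KN} \subseteq [0,1]^{KN}$, integer vertices are automatically binary, which yields the claim. The main obstacle in this plan is the verification of total unimodularity, but thanks to the transparent consecutive-ones structure of $A$ inherited from the rolling-window definition of $\D^+_{KN}$, this obstacle is essentially cosmetic.
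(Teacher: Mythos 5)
Your proposal is correct and follows essentially the same route as the paper: both express $\D^+_{KN}$ via a constraint matrix with the consecutive-ones (interval) structure stacked with an identity block, establish total unimodularity, and then invoke integrality of vertices (Hoffman--Kruskal) together with the box constraints to conclude that the vertices are binary. The only difference is that the paper cites known results on interval matrices for the total unimodularity step, whereas you verify it self-containedly via the standard column-differencing reduction to a digraph incidence matrix, which is a valid argument.
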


\begin{proof}
	The polyhedron $\D^+_{KN}$ can be represented more concisely as $\{\bm{\delta} \in \mathbb{R}_+^{KN}: \bm{A\delta} \leq \bm{b}\}$, where
	\begin{equation*}
		\bm{A} = 
		\begin{pmatrix}
			\bm{C} \\
			\bm{I}
		\end{pmatrix} \in \mathbb{R}^{2KN \times KN}, \quad
		\bm{b} = 
		\begin{bmatrix}
			N\frac{\gamma}{\Delta t} \bm{1} \\
			\bm{1}
		\end{bmatrix} \in \mathbb{R}^{2KN}
	\end{equation*}
	and $\bm{C} \in \mathbb{R}^{KN \times KN}$ is defined through $C_{ij} = 1$ if~$i - N\Gamma/\Delta t < j \leq i$ and~$C_{ij} = 0$ otherwise. Here, $\bm{I}$ denotes the identity matrix and $\bm{1}$ the column vector of $1$s in $\mathbb{R}^{KN}$. By construction, $\bm{A}$ is a binary matrix where the $1$s appear consecutively in each row. Proposition~2.1 and Corollary~2.10 by \cite{GN99} thus imply that $\bm{A}$ is totally unimodular. As $\bm{b} \in \mathbb{Z}^{KN}$ because of Assumption~\ref{Ass:div}, all vertices of~$\D^+_{KN}$ are integral thanks to Proposition~2.2 again by \cite{GN99}. In addition, as $D^+_{KN} \subseteq [0,1]^{KN}$, the vertices of $\D^+_{KN}$ are in fact binary vectors.
\end{proof}

We are now ready to prove Proposition~\ref{Prop:yubar}.

\begin{proof}[Proof of Proposition~\ref{Prop:yubar}]
	The claim follows if we can show that
	\begin{equation}
		\label{eq:yubar_start}
		\min_{t \in \T} \min_{\delta \in \D} y(x^b, x^r, \delta, y_0, t) = \min_{k \in \K \cup \{0\}} \min_{\bm \delta \in \D_{\K}} y_k(\bm{x^b}, \bm{x^r}, \bm \delta, y_0).
	\end{equation}
	In the first part of the proof, we reformulate the continuous non-convex minimization problem $\min_{\delta \in \D} y(x^b, x^r, \delta, y_0, t)$ as a continuous linear program. To ease notation, we set $\Delta \eta = \frac{1}{\eta^-} - \eta^+ \geq 0$ and define the auxiliary functions
	\begin{equation*}
		\chi(\delta(t),t) = \max\left\{ \eta^+ x^r(t) \delta(t), \frac{1}{\eta^-}x^r(t)\delta(t) - \Delta \eta \, x^b(t) \right\}
	\end{equation*}
	and
	\begin{equation*}
		m(t) = \max\left\{ \eta^+ x^r(t), \frac{1}{\eta^-}x^r(t) - \Delta \eta \, x^b(t) \right\}
	\end{equation*}
	for all~$t \in \mathcal{T}$. The function $\chi(\delta(t),t) $ can be viewed as a {\em nonlinear decision rule} of the uncertain frequency deviation $\delta(t)$. Using these conventions, we find
	\begin{equation}
		\label{eq:yubar_cont_ldr}
		\begin{aligned}
			\min_{\delta \in \D} y\big(x^b,x^r,\delta,y_0,t\big)
			& = \min_{\delta \in \D^+} y\big(x^b,x^r,-\delta,y_0,t\big) \\
			&= y_0 + \min_{\delta \in \D^+} \int_0^{t} \eta^+ x^b(t') - \chi\left(\delta(t'),t'\right) - d(t') \, \mathrm{d}t'\\
			& = y_0 + \min_{\delta \in \D^+ \cap \set{L}(\T, \{0,1\})} \int_0^{t} \eta^+ x^b(t') - \chi\left(\delta(t'),t'\right) - d(t') \, \mathrm{d}t'\\
			& = y_0 + \min_{\delta \in \D^+ \cap \set{L}(\T, \{0,1\})} \int_0^{t} \eta^+ x^b(t') - m(t')\delta(t') - d(t') \, \mathrm{d}t' \\
			& = y_0 + \min_{\delta \in \D^+} \int_0^{t} \eta^+ x^b(t') - m(t')\delta(t') - d(t') \, \mathrm{d}t',
		\end{aligned}
	\end{equation}
	where the first equality holds because the statements $\delta \in \D$, $-\delta \in \D$ and $\vert \delta \vert \in \D^+$ are all equivalent and because $y$ is nondecreasing in $\delta$ thanks to Proposition~\ref{Prop:y}. The second equality follows from the definitions of $y$, $y^+$, $y^-$ and $\chi$, and the third equality is a direct consequence of Lemma~\ref{lem:LDR_Opt}, which applies because $-\chi$ is concave and nonincreasing in its first argument and, by virtue of Assumption~\ref{Ass:cst}, piecewise constant in its second argument. The fourth equality holds because $\chi(\delta(t'),t') = m(t')\delta(t')$ whenever $\delta(t') \in \{0,1\}$, and the last equality follows again from Lemma~\ref{lem:LDR_Opt}. Note that $m(t')\delta(t')$ constitutes a {\em linear decision rule} of $\delta(t)$.
	
	In the second part of the proof we assume that $t = k\Delta t$ for some $k \in \K \cup \{0\}$ and show that 
	\begin{equation*}
		\min_{\delta \in \D} y(x^b, x^r, \delta, y_0, t)
		= \min_{\bm \delta \in \D_{\K}} y_k(\bm{x^b}, \bm{x^r}, \bm \delta, y_0).
	\end{equation*}
	To this end, we	define $\chi_l(\delta_l) = \max\{\eta^+ x^r_l \delta_l, \frac{1}{\eta^-}x^r_l \delta_l - \Delta \eta \, x^b_l\}$ and $m_l = \max\{ \eta^+ x^r_l, \frac{x^r_l}{\eta^-} - \Delta \eta \, x^b_l \}$ for all~$l \in \mathcal{K}$. By~\eqref{eq:yubar_cont_ldr}, we thus have
	\begin{equation}
		\label{eq:yubar_res}
		\begin{aligned}
			\min_{\delta \in \D} y\big(x^b,x^r,\delta,y_0,k\Delta t\big)
			& = y_0 + \min_{\delta \in \D^+} \int_0^{k \Delta t} \eta^+ x^b(t') - m(t')\delta(t') - d(t') \, \mathrm{d}t' \\
			& = y_0 + \min_{\delta \in \D^+} \sum_{l=1}^k \int_{\T_l} \eta^+ x^b_l - m_l\delta(t') - d_l \, \mathrm{d}t' \\
			& = y_0 + \min_{\bm{\delta} \in \D^+_\K} \Delta t \sum_{l=1}^{k} \eta^+ x^b_l - m_l\delta_l - d_l \\
			& = \min_{\bm{\delta} \in \D_\K} y_k \left( \bm{x^b}, \bm{x^r}, \bm{\delta}, y_0 \right),
		\end{aligned}
	\end{equation}
	where the second equality holds because $d, x^b$ and $x^r$ are piecewise constant by virtue of Assumption~\ref{Ass:cst}, which implies that $m(t') = m_l$ for every~$t' \in \mathcal{T}_l$. The third equality then follows from Proposition~\ref{Prop:D}. The fourth equality can be proved by reversing the arguments from~\eqref{eq:yubar_cont_ldr} with obvious minor modifications. In fact, as the frequency deviation scenarios are now piecewise constant and can be encoded by finite-dimensional vectors, the proof requires no cumbersome limiting arguments as the ones developed in the proof of Lemma~\ref{lem:LDR_Opt}. We omit the details for brevity.
	
	In the third part of the proof we assume that $t \in \T_k$ for some $k \in \K$ and show that
	\begin{equation*}
		\min_{t \in \T_k} \min_{\delta \in \D} y(x^b, x^r, \delta, y_0, t) = \min_{l \in \{k-1,k\}} \min_{\bm \delta \in \D_\K} y_l(\bm{x^b}, \bm{x^r}, \bm \delta, y_0).
	\end{equation*}
	As in the proof of Proposition~\ref{Prop:ybar}, we distinguish whether or not the vehicle is driving in period~$\T_k$. Specifically, if the vehicle is driving in period~$\T_k$, then $\bar{y}^+(t) = \bar{y}^-(t) = 0$, which implies that
	\begin{align*}
		\min_{\delta \in \D} y(x^b,x^r,\delta,y_0,t) = & \min_{\delta \in \D} y(x^b,x^r,\delta,y_0,(k-1)\Delta t) - \int_{(k-1) \Delta t}^{t} d(t') \, \mathrm{d}t \\
		= & \min_{\delta \in \D} y(x^b,x^r,\delta,y_0,k\Delta t) + \int_{t}^{k \Delta t} d(t') \, \mathrm{d}t \\
		\geq & \min_{\delta \in \D} y(x^b,x^r,\delta,y_0,k\Delta t) = \min_{\bm{\delta} \in \D_\K} y_{k}(\bm{x^b}, \bm{x^r}, \bm{\delta},y_0).
	\end{align*}
	Here, the inequality holds because $d(t) \geq 0$ for all $t\in \T_k$, and the last equality follows from~\eqref{eq:yubar_res}. Otherwise, if the vehicle is parked in period $\T_k$, then $d(t) = 0$ for all $t\in \T_k$, and hence
	\begin{equation}
		\label{eq:local_sensitivity}
		\begin{aligned}
			& \phantom{=}\min_{\delta \in \D} y(x^b,x^r,\delta,y_0,t) \\
			& = y_0 + \min_{\delta \in \D^+} \sum_{l=1}^{k-1} \int_{\T_l} \eta^+x^b_l - m_l \delta(t') - d_l \, \mathrm{d}t'
			+ \int_{(k-1)\Delta t}^{t} \eta^+x^b_k - m_k \delta(t') \, \mathrm{d}t' \\
			& = y_0 + \min_{\delta \in \D^+(t)}
			\Delta t \sum_{l=1}^{k-1} \eta^+ x^b_l - d_l +
			(t - (k - 1)\Delta t) \eta^+x^b_k -  \sum_{l=1}^{k} \int_{\T_l} m_l \delta(t') \, \mathrm{d}t'\\
			& = y_0 + \Delta t \sum_{l=1}^{k-1} \eta^+ x^b_l - d_l +
			(t - (k - 1)\Delta t) \eta^+x^b_k
			- \max_{\bm{\delta} \in \D^+_\K(t)} \Delta t \sum_{l=1}^{k} m_l \delta_l ,
		\end{aligned}
	\end{equation}
	where we use the time-dependent uncertainty sets 
	\begin{equation*}
		\D^+(t) = \bigg\{\delta \in \D^+ : \delta(t') = 0 ~\forall t' \in [t, k\Delta t]\bigg\} \quad \mathrm{and} \quad \D^+_\K(t) = \bigg\{ \delta \in \D^+_\K: \delta_k \leq \frac{t - (k-1)\Delta t}{\Delta t} \bigg\}
	\end{equation*}
	to simplify the notation. The first equality in~\eqref{eq:local_sensitivity} follows from~\eqref{eq:yubar_cont_ldr} and Assumption~\ref{Ass:cst}. Note that $\delta(t')$ does not impact the objective function of the resulting minimization problem over~$\D^+$ for any $t' > t$. It is therefore optimal to set $\delta(t') = 0$ for all $t' \geq t$ and, in particular, for all $t' \in [t, k\Delta t]$. This restriction has no impact on the objective function but maximizes nature's flexibility in selecting harmful frequency deviations $\delta(t')$ for $t' \leq t$. Hence, the second equality in~\eqref{eq:local_sensitivity} follows. As $\delta$ is now integrated against a piecewise constant function, it may be	averaged over the trading intervals without changing its objective function value. The third equality in~\eqref{eq:local_sensitivity} thus holds because~$L^\dagger \D^+(t) = \D^+_\K(t)$, which can be proved similarly to Proposition~\ref{Prop:D} by noting that
	\begin{equation*}
		( \set{L}^\dagger \delta )_k
		= \frac{1}{\Delta t} \int_{(k-1)\Delta t}^{t} \delta(t') \, \mathrm{d}t' \leq \frac{t - (k-1)\Delta t}{\Delta t}
		\quad \forall \delta \in \mathcal{D}^+(t).
	\end{equation*}
	In the following we show that~\eqref{eq:local_sensitivity} is piecewise affine in~$t$. To this end, note that the optimization problem in the last line of~\eqref{eq:local_sensitivity} can be expressed more concisely as the standard form linear program
	\begin{equation}
		\label{pb:SP}
		\begin{array}{{>{\displaystyle}c>{\displaystyle}l}}
			\min_{\bm z \geq \bm 0} & \bm c^\top \bm z \\
			\subj			   & \bm A \bm z = \bm b(t),
		\end{array}
	\end{equation}
	where $\bm z^\top = (\bm \delta^\top, \bm s^\top) \in \mathbb{R}^{K} \times \mathbb{R}^{2K}$ combines the (averaged) frequency deviations in the trading intervals with a vector of slack variables. Here, the vector $\bm c \in \mathbb{R}^{3K}$ of objective function coefficients is defined through $c_l = -m_l \Delta t$ if $l \leq k$ and $c_l = 0$ otherwise. The constraints involve the matrix 
	\begin{equation*}
		\bm A = \begin{pmatrix}
			\bm C & \bm{I} & \bm{0} \\
			\bm{I}& \bm{0} & \bm{I}
		\end{pmatrix} \in \mathbb{R}^{2K \times 3K},
	\end{equation*}
	where $\bm{C} \in \mathbb{R}^{K \times K}$ is defined through $C_{ij} = 1$ if $i - \Gamma/\Delta t < j \leq i$ and $C_{ij} = 0$ otherwise, and the vector $\bm{b}(t) \in \mathbb{R}^{2K}$ is defined through $b_l(t) = \frac{t - (k-1)\Delta t}{\Delta t}$ if $l = k+K$ and $b_l = 1$ otherwise. By Lemma~\ref{lem:TUM} and Proposition~2.1 of \cite{GN99}, $\bm A$ is totally unimodular.
	
	Note that~\eqref{pb:SP} is solvable for every $t \in \T_k$ because its feasible set is non-empty and compact. Next, choose any~$t_0$ in the interior of~$\T_k$, denote by~$\bm{B}$ an optimal basis matrix for problem~\eqref{pb:SP} at $t = t_0$, and define~$\bm{z}^\star(t) = \bm B^{-1} \bm b(t) $ for all~$t \in \T_k$. In the following, we will use local sensitivity analysis of linear programming to show that~$\bm z^\star(t)$ is optimal in~\eqref{pb:SP} for all~$t \in \T_k$. As the basis~$\bm B$ remains dual feasible when~$t$ deviates from~$t_0$, it suffices to show that
	\begin{equation}
		\label{eq:local_sen}
		\bm{z}^\star(t) = \bm{z}^\star(t_0) + \frac{t-t_0}{\Delta t} \bm B^{-1}\bm{e}_{K+k}
		\geq \bm 0 \quad \forall t \in \T_k,
	\end{equation} 
	where~$\bm{e}_{K+k}$ denotes the $(K+k)$-th standard basis vector in~$\mathbb{R}^{2K}$~\citep[p.~207]{DB97}.
	To this end, note that $\bm B$ is a non-singular square matrix constructed from $2K$~columns of~$\bm A$ and is therefore also totally unimodular. Moreover, $\bm B^{-1}$ is totally unimodular because pivot operations preserve total unimodularity~\citep[Proposition~2.1]{GN99}. Hence, we have~$\bm{B}^{-1}\bm{e}_{K+k} \in \{-1,0,1\}^{2K}$. By construction, we further have $\bm b(t) \in \{0,1\}^{2K}$ for $t = k \Delta t$, which implies that $\bm{z}^\star(k\Delta t) \in \mathbb{Z}^{2K}$. Evaluating~\eqref{eq:local_sen} at $t = k \Delta t$ then yields 
	\begin{equation*}
		\bm z^\star(t_0) = \bm z^\star(k\Delta t) - \frac{k\Delta t - t_0}{\Delta t} \bm B^{-1} \bm e_{K+k},
	\end{equation*}
	which ensures that $\bm z^\star(k \Delta t) \ge \bm 0$. Indeed, if any component of the integral vector $\bm z^\star(k \Delta t)$ was strictly negative, it would have to be smaller or equal to~$-1$. As $t_0$ resides in the interior of~$\T_k$ and thus $\vert (k\Delta t - t_0)/\Delta t \vert < 1$, the corresponding component of~$\bm z^\star(t_0)$ would then also have to be strictly negative. This, however, contradicts the optimality of~$\bm z^\star(t_0)$, which implies that $\bm z^\star(t_0) \ge \bm 0$. Hence, we have $\bm z^\star(k \Delta t) \ge \bm 0$. One can use similar arguments to prove that $\bm z^\star((k-1)\Delta t) \ge \bm 0$. As $\bm z^\star(t)$ is affine in~$t$, it is indeed non-negative for all~$t \in \T_k$.
	
	The above reasoning shows that~$\bm z^\star(t)$ is optimal in~\eqref{pb:SP} and that the minimum of~\eqref{pb:SP} is affine in~$t$ on~$\T_k$. Equation~\eqref{eq:local_sensitivity} further implies that~$\min_{\delta \in \D} y(x^b,x^r,\delta,y_0,t)$ is affine in~$t$ on~$\T_k$, and thus
	\begin{equation*}\label{eq:yubar_squezze}
		\min_{t \in \T_k} \min_{\delta \in \D} y(x^b,x^r,\delta,y_0,t) =
		\min_{l\in\{k-1,k\}} \min_{\delta \in \D} y(x^b,x^r,\delta,y_0,l\Delta t) =
		\min_{l\in\{k-1,k\}} \min_{\bm \delta \in \D_\K} y_l(\bm{x^b},\bm{x^r},\bm \delta,y_0),
	\end{equation*}
	where the second equality follows from~\eqref{eq:yubar_res}. As~$k \in \K$ was chosen arbitrarily, \eqref{eq:yubar_start} follows.
\end{proof}

\begin{Prop}\label{Prop:H} The following equality holds.
	\begin{equation*}
		\max_{\delta \in \set{\hat{D}}, y_0 \in \set{\hat{Y}}_0} \varphi(y(x^b,x^r,\delta,y_0,T)) = \max_{\bm{\delta} \in \set{\hat{D}}_\K,  y_0 \in \set{\hat{Y}}_0} \varphi(y_K(\bm{x^b},\bm{x^r},\bm{\delta},y_0))
	\end{equation*}
\end{Prop}

\begin{proof}
	By introducing an auxiliary epigraphical variable~$z$, we find
	\begin{equation}
		\begin{aligned}
			\label{eq:z}
			\max_{\delta \in \set{\hat{D}},y_0 \in \set{\hat{Y}}_0} \varphi(y(x^b,x^r,\delta,y_0,T))
			& = \left\{ \begin{array}{*1{>{\displaystyle}c}*1{>{\displaystyle}l}}
				\min_{z} & z \\
				\subj & z \geq \max_{\delta \in \set{\hat{D}}, y_0 \in \set{\hat{Y}}_0} \varphi(y(x^b,x^r,\delta,y_0,T))
			\end{array} 
			\right. \\
			& = \left\{ \begin{array}{*1{>{\displaystyle}c}*2{>{\displaystyle}l}}
				\min_{z} & z \\
				\subj & z \geq \max_{\delta \in \set{\hat{D}}, y_0 \in \set{\hat{Y}}_0} 
				q_n y(x^b,x^r,\delta,y_0,T) + r_n & \forall n \in \set{N}
			\end{array} 
			\right. \\
			& = \left\{ \begin{array}{*1{>{\displaystyle}c}*2{>{\displaystyle}l}}
				\min_{z} & z \\
				\subj & z \geq \max_{\delta \in \set{\hat{D}}, y_0 \in \set{\hat{Y}}_0} 
				q_n y_K(\bm{x^b},\bm{x^r},\bm \delta,y_0) + r_n & \forall n \in \set{N}
			\end{array} 
			\right. \\
			& = \max_{\bm{\delta} \in \set{\hat{D}}_\K,  y_0 \in \set{\hat{Y}}_0} \varphi(y_K(\bm{x^b},\bm{x^r},\bm{\delta},y_0)).
		\end{aligned}
	\end{equation}
	The second equality follows from the definition of $\varphi$ and the third equality follows from Propositions~\ref{Prop:ybar} and~\ref{Prop:yubar}, which apply since $\hat{\D}$ and $\hat{\D}_\K$ have the same structures as $\D$ and $\D_{\K}$, respectively.
\end{proof}

\begin{proof}[Proof of Theorem~\ref{th:time}]
	The claim follows immediately from Propositions~\ref{Prop:yc}--\ref{Prop:H}.
\end{proof}

% -------------------
% Linearization
% -------------------

\begin{proof}[Proof of Theorem~\ref{th:lr}]
	By introducing embedded optimization problems that evaluate the (decision-dependent) worst-case frequency deviation scenarios and by replacing the uncertain initial state-of-charge in each robust constraint with its (decision-\emph{in}dependent) worst-case value, \eqref{pb:R} becomes
	\begin{equation}
		\label{pb:R'}
		\begin{array}{>{\displaystyle}c*3{>{\displaystyle}l}>{\displaystyle}c}
			\min_{\bm{x^b},\bm{x^r} \in \set{X}_\K, z \in \mathbb{R}} & \multicolumn{4}{>{\displaystyle}l}{c_\K(\bm{x^b},\bm{x^r}) + z} \\
			\subj & \max_{\bm{\delta} \in \D_{\K}} y^+(x^b_k,x^r_k,\delta_k) &\leq \bar{y}^+_k 
			& \forall k \in \K & \hspace{0.75cm}\text{(a)} \\
			& \max_{\bm{\delta} \in \D_{\K}} y^-(x^b_k,x^r_k,\delta_k) &\leq \bar{y}^-_k 
			& \forall k \in \K & \hspace{0.75cm}\text{(b)} \\
			& \max_{\bm \delta \in \D_{\K}}  y_{k}(\bm{x^b},\bm{x^r},\bm{\delta}, \bar y_0) &\leq \bar{y}
			& \forall k \in \K \cup \{0\} & \hspace{0.75cm}\text{(c)} \\
			& \min_{\bm \delta \in \D_{\K}}  y_{k}(\bm{x^b},\bm{x^r},\bm{\delta}, \ubar y_0) &\geq \ubar{y} 
			& \forall k \in \K \cup \{0\} & \hspace{0.75cm}\text{(d)} \\
			& \max_{y_0 \in \hat{\set{Y}}_0} \max_{\bm{\delta} \in \set{\hat{D}}_\K} \varphi\big(y_K(\bm{x^b},\bm{x^r},\bm{\delta},y_0)\big) 
			& \leq z & &\hspace{0.75cm}\text{(e)}
		\end{array}
	\end{equation}
	Here, the worst-case cost-to-go has been moved from the objective function to the constraints by introducing the auxiliary epigraphical variable~$z$. To show that~\eqref{pb:R'} is equivalent to~\eqref{pb:LR}, we reuse several results derived for the proof of Theorem~\ref{th:time}. First, by Equation~\eqref{eq:yc} in the proof of Proposition~\ref{Prop:yc} the maximum charging power in~(\ref{pb:R'}a) equals
	\begin{equation*}
		\max_{\bm{\delta} \in \D_{\K}}  y^+(x^b_k,x^r_k,\delta_k) = x^r_k + x^b_k.
	\end{equation*}
	Using similar arguments, it can be shown that the maximum discharging power in~(\ref{pb:R'}b) reduces to
	\begin{equation*}
		\max_{\bm{\delta} \in \D_{\K}}  y^-(x^b_k,x^r_k,\delta_k) = x^r_k - x^b_k.
	\end{equation*}
	Next, Equation~\eqref{eq:ybar_full} in the proof of Proposition~\ref{Prop:ybar} reveals that, for any~$k \in \K \cup \{0\}$, the maximum state-of-charge in~(\ref{pb:R'}c) is given by 
	\begin{equation*}
		\max_{\bm{\delta} \in \D_\K} y_{k}\left(\bm{x^b},\bm{x^r},\bm{\delta}, \bar y_0\right)
		= \bar y_0 + \max_{\bm{\delta} \in \D^+_\K} \Delta t \sum_{l=1}^{k} \eta^+ \left( x^b_l + \delta_l x^r_l \right) - d_l.
	\end{equation*}
	Similarly, Equation~\eqref{eq:yubar_res} in the proof of Proposition~\ref{Prop:yubar} implies that, for every~$k \in \K \cup \{0\}$, the minimum state-of-charge in~(\ref{pb:R'}d) amounts to
	\begin{equation*}
		\min_{\bm{\delta} \in \D_\K} y_k \left( \bm{x^b}, \bm{x^r}, \bm{\delta}, \ubar y_0 \right) = \ubar y_0 + \min_{\bm{\delta} \in \D^+_\K} \Delta t \sum_{l=1}^{k} \eta^+ x^b_l - m_l\delta_l - d_l,
	\end{equation*}
	where $m_l = \max\{ \eta^+ x^r_l, \frac{1}{\eta^-} x^r_l - \Delta \eta \, x^b_l \}$ constitutes an implicit function of the market decisions~$x^b_l$ and~$x^r_l$. As~(\ref{pb:R'}d) imposes a~\emph{lower} bound on the minimum state-of-charge, $m_l$ may be reinterpreted as an auxiliary epigraphical variable that satisfies~$m_l \geq \eta^+ x^r_l$ and~$m_l \geq \frac{1}{\eta^-} x^r_l - \Delta \eta \, x^b_l$.
	
	Finally, the maximum cost-to-go in~(\ref{pb:R'}e) can be reformulated as
	\begin{align*}
	    & \max_{y_0 \in \hat{\set{Y}}_0} \max_{\bm{\delta} \in \set{\hat{D}}_\K} \varphi\big(y_K(\bm{x^b},\bm{x^r},\bm{\delta},y_0)\big)
	     = \max_{y_0 \in \hat{\set{Y}}_0}\max_{\bm{\delta} \in \set{\hat{D}}_\K} \max_{n \in \set{N}} \, q_n y_K(\bm{x^b},\bm{x^r},\bm{\delta},y_0) + r_n \\
	     & \quad  =  \max_{\bm \delta \in \set{\hat{D}}_\K} \max\left\{ \max_{n \in \set{N}_+} r_n + q_n y_K(\bm{x^b},\bm{x^r},\bm{\delta},\hat{\bar y}_0), \,
	     \max_{n \in \set{N}_-} r_n + q_n y_K(\bm{x^b},\bm{x^r},\bm{\delta},\ubar{\hat y}_0),\max_{n\in\N_0} r_n
	     \right\},
	\end{align*}
	where the first equality follows from the definition of the convex piecewise affine function~$\varphi$. The second equality holds because the order of maximization is immaterial, as~$q_n > 0$ for all $n \in \set{N}_+$, $q_n < 0$ for all~$n \in \set{N}_-$ and~$q_n=0$ for all~$n\in\N_0$ and because the state-of-charge $y_K(\bm{x^b},\bm{x^r},\bm{\delta},y_0)$ increases with~$y_0$. Requiring the last expression to be smaller than or equal to~$z$ is equivalent to
	\begin{align*}
	    \max_{y_0 \in \hat{\set{Y}}_0} \max_{\bm{\delta} \in \set{\hat{D}}_\K} \varphi\big(y_K(\bm{x^b},\bm{x^r},\bm{\delta},y_0)\big) \leq z 
	    \iff 
	    \begin{cases}
	        \displaystyle \max_{\bm{\delta} \in \set{\hat{D}}_\K} y_K(\bm{x^b},\bm{x^r},\bm{\delta},\hat{\bar y}_0) \leq (z-r_n)/q_n & \forall n \in \set{N}_+ \\
	        \displaystyle \min_{\bm{\delta} \in \set{\hat{D}}_\K} y_K(\bm{x^b},\bm{x^r},\bm{\delta},\ubar{\hat y}_0)  \geq (z-r_n)/q_n & \forall n \in \set{N}_- \\
	        r_n\leq z & \forall n\in\N_0.
	    \end{cases}
	\end{align*}
	As~$\D_{\K}$ and~$\hat \D_{\K}$ have the same structure, the embedded optimization problems over $\bm \delta \in \hat{\set{D}}_\set{K}$ admit similar linear reformulations as the embedded optimization problems over $\bm \delta \in \set{D}_\set{K}$ in~(\ref{pb:R'}c) and~(\ref{pb:R'}d). Substituting all obtained reformulations into~\eqref{pb:R'} yields~\eqref{pb:LR}.
\end{proof}

\begin{proof}[Proof of Theorem~\ref{th:lp}]
	Problem~\eqref{pb:LR} can be reformulated as a linear program by using the standard machinery of robust optimization~\citep{DB04,AB09}. For example, the robust upper bound on the state-of-charge for a fixed~$k \in \K \cup \{0\}$ is equivalent to
	\begin{equation}
		\label{eq:max_primal}
		\bar y_0 + \Delta t \sum_{l = 1}^k \eta^+ \left( x^b_l + \delta_l x^r_l \right) - d_l \leq \bar y \quad \forall \bm \delta \in \D^+_\K
		\iff
		\max_{\bm \delta \in \D^+_\K} \Delta t \sum_{l = 1}^k \eta^+ \left( x^b_l + \delta_l x^r_l \right) - d_l \leq \bar y - \bar y_0.
	\end{equation}
	By strong linear programming duality, the maximization problem in~\eqref{eq:max_primal} is equivalent to
	\begin{equation}
		\label{eq:min_dual}
		\begin{array}{*1{>{\displaystyle}c}*2{>{\displaystyle}l}}
			\min_{\bm \Lambda^+, \bm \Theta^+ \in \mathbb{R}^{K \times K}_+} & \multicolumn{2}{>{\displaystyle}l}{\sum_{l = 1}^k \Delta t \left( \eta^+ x^b_l + \Lambda^+_{k,l} - d_l \right) + \gamma \Theta^+_{k,l} } \\
			\subj &
			\Lambda^+_{k,l} + \sum\limits_{i = l}^{j(k,l)} \Theta^+_{k,i} \geq \eta^+ x^r_l & \forall l \in \K: \; l \leq k, \\
		\end{array}
	\end{equation}
	and the minimum of~\eqref{eq:min_dual} is smaller or equal to~$\bar y - \bar y_0$ if and only if problem~\eqref{eq:min_dual} has a feasible solution whose objective value is smaller or equal to~$\bar y - \bar y_0$. Therefore, the robust constraint~\eqref{eq:max_primal} is equivalent to the following system of ordinary linear constraints. 
	\begin{equation*}
		\label{eq:min_dual_const}
		\begin{array}{*3{>{\displaystyle}l}}
			\sum_{l = 1}^k \Delta t \left( \eta^+ x^b_l + \Lambda^+_{k,l} - d_l \right) + \gamma \Theta^+_{k,l} 
			& \leq \bar y - \bar y_0 \\
			\Lambda^+_{k,l} + \sum\limits_{i = l}^{I(k,l)} \Theta^+_{k,i} \geq \eta^+ x^r_l \quad \forall l \in \K: \; l \leq k \\
		\end{array}
	\end{equation*}
	The remaining robust constraints in~\eqref{pb:LR} can be simplified in a similar manner.
\end{proof}

% -----------------------
% Aggregator
% -----------------------
\begin{proof}[Proof of Theorem~\ref{th:c}]
    Since $x^\uparrow_v(t)$ and $x^\downarrow_v(t)$ are nonnegative for all $v \in \V$ and all $t \in \T$, the functions $y^+$ and $y$ remain nondecreasing in $\delta$ and $y^-$ remains nonincreasing in $\delta$, as in Proposition~\ref{Prop:y}. Thanks to the symmetry of~$\D$, the upper bounds on the charging rate and the state-of-charge of each vehicle will thus hold for all $\delta \in \D$ if and only if they hold for all $\delta \in \D_+$. Similarly, the upper bound on the discharing rate and the lower bound on the state-of-charge will hold if and only if they hold for all $-\delta \in \D_+$. For a fixed time~$t$ and vehicle~$v$, we then have
    \begin{align*}
        \max_{\delta \in \D} y^+(x^b_v(t), x^\uparrow_v(t), x^\downarrow_v(t), \delta(t))
        = & \max_{\delta \in \D^+} y^+(x^b_v(t), x^\uparrow_v(t), x^\downarrow_v(t), \delta(t)) \\
        = & \max_{\delta \in \D^+} \left[ x^b_v(t) + \left[ \delta(t) \right]^+ x^{\downarrow}_v (t) - \left[ \delta(t) \right]^- x^{\uparrow}_v(t) \right]^+ \\
        = & \max_{\delta \in \D^+} \left[ x^b_v(t) + \delta(t) x^{\downarrow}_v (t) \right]^+
        = \max_{\delta \in \D} y^+(x^b_v(t), 0, x^\downarrow_v(t), \delta(t)).
    \end{align*}
    The maximum charging rate depends thus on the frequency regulation bids only through the bid for down-regulation. To prove the upper bound on the charging rate, we can thus set $x^r(t) = x_v^\downarrow(t)$ and apply assertion~($i$) of Proposition~\ref{Prop:yc}. The reasoning for the other robust bounds is similar and omitted for brevity. In summary, problem~\eqref{pb:C} is thus equivalent to the linear program
    \begin{equation*}
    \begin{array}{*1{>{\displaystyle}c}*2{>{\displaystyle}l}}
    \min & \multicolumn{2}{>{\displaystyle}l}{c_\K\left(\sum_{v \in \V} \bm{x^b}_v, \sum_{v \in \V} \bm x^\uparrow_v \right) + \sum_{v \in \V} z_v} \\ 
    \subj
    & \bm{x^b}_v, \bm x^\uparrow_v, \bm x^\downarrow_v \in \set{X}_\K,~z_v\in\R
    & \forall v \in \V \\
    & \bm m_v, \bm \lambda^+_v, \bm \lambda^-_v, \bm \theta^+_v, \bm \theta^-_v \in \R^K_+
    & \forall v \in V \\
    & \bm \Lambda^+_v, \bm \Lambda^-_v, \bm \Theta^+_v, \bm \Theta^-_v \in \mathbb{R}^{K\times K}_+
    & \forall v \in V \\
    & \multicolumn{1}{>{\displaystyle}l}{
    x^\downarrow_{v,k} + x^b_{v,k} \leq \bar{y}^+_{v,k},
    \quad
    x^\uparrow_{v,k} - x^b_{v,k}
    \leq \bar{y}^-_{v,k}} & \forall v \in \V, \forall k \in \K \\
    & \multicolumn{1}{>{\displaystyle}l}{
    m_{v,k} \geq \eta^+_v x^\uparrow_{v,k}, 
    \quad
    m_{v,k} \geq \frac{1}{\eta^-_v} x^\uparrow_{v,k} - \Delta \eta_v \, x^b_{v,k} }
    & \forall v \in \V, \forall k \in \K \\	
    & \sum_{l=1}^k \Delta t \left( \eta^+_v x^b_{v,l} + \Lambda^+_{v,k,l} - d_{v,l} \right)  + \gamma \Theta^+_{v,k,l} \leq \bar{y}_v - \bar y_{0,v} & \forall v \in \V, \forall k \in \K \cup\{0\}\\
    &\sum_{l = 1}^k \Delta t \left( \eta^+_v x^b_{v,l} - \Lambda^-_{v,k,l} - d_{v,l} \right) - \gamma \Theta^-_{v,k,l} \geq \ubar{y}_v - \ubar y_{0,v} & \forall v \in \V, \forall k \in \K \cup\{0\}\\
    & \sum_{k \in \K} \Delta t \left( \eta^+_v x^b_{v,k} + \lambda^+_{v,k} - d_{v,k} \right) + \hat \gamma \theta^+_{v,k} \leq \frac{z_v-r_{v,n}}{q_{v,n}} - \hat{\bar y}_{0,v} & \forall v \in \V, \forall n \in \mathcal{N}_+ \\
    & \sum_{k \in \K} \Delta t \left( \eta^+_v x^b_{v,k} - \lambda^-_{v,k} - d_{v,k} \right) - \hat \gamma \theta^-_{v,k} \geq \frac{z_v-r_{v,n}}{q_{v,n}} - \hat{\ubar y}_{0,v} & \forall v \in \V, \forall n \in \mathcal{N}_- \\
    & r_{v,n} \leq z_v &  \forall v \in \V,~\forall n\in\N_0\\
    & \Lambda^+_{v,k,l} + \sum\limits_{i = l}^{I(k,l)} \Theta^+_{v,k,i} \geq \eta^+_v x^\downarrow_{v,l}, \quad
    \Lambda^-_{v,k,l} + \sum\limits_{i = l}^{I(k,l)} \Theta^-_{v,k,i} \geq m_{v,l} & \forall v \in \V, \forall k,l \in \K:\; l \leq k \\
    & \lambda^+_{v,k} + \sum\limits_{i = k}^{\hat I(K,k)} \theta^+_{v,i} \geq \eta^+_v x^\downarrow_{v,k}, \quad
    \lambda^-_{v,k} + \sum\limits_{i = k}^{\hat I(K,k)} \theta^-_{v,i} \geq m_{v,k} & \forall v \in \V, \forall k \in \K \\
    & \sum_{v \in \V} x^\uparrow_{v,k} - x^\downarrow_{v,k} = 0 & \forall k \in \K.
    \end{array}
    \end{equation*}
    Different vehicles are only coupled through the last equality constraint, which ensures that the aggregate participation in the frequency regulation market is symmetric in each period $k\in\mathcal K$.
\end{proof}

\end{document}